\newtheorem{thm}{Theorem}[section]
\newtheorem{lemma}[thm]{Lemma}
\newtheorem{prop}[thm]{Proposition}
\newtheorem{cor}[thm]{Corollary}
\theoremstyle{definition}
\theoremstyle{remark}
\numberwithin{equation}{section}
\DeclareMathOperator{\im}{Im}
\DeclareMathOperator{\loc}{loc}
\def \intx {\stackrel{\circ}{X}}
\def \im {\operatorname{Im}}
\def \restr {{\left.\right|}}
\def \ogamma {\overline{\Gamma}}
\def \mrn {{\mathbb R}^n}
\def \mr {{\mathbb R}}
\def \loc {{\operatorname{loc}}}
\def \mcs {{\mathcal S}}
\def \mcsg {{\mathcal S}\left.\right|_{\mr \times \Gamma}}
\def \mcp {{\mathcal P}}
\def \mcm {{\mathcal M}}
\def \mca {{\mathcal A}}
\def \mcd {{\mathcal D}}
\def \mcf {{\mathcal F}}
\def \mcl {{\mathcal L}}
\def \mcr {{\mathcal R}}
\def \mco {{\mathcal O}}
\def \mcq {{\mathcal Q}}
\def \mcc {{\mathcal C}}
\def \mh {{\mathbb H}}
\def \mcn {{\mathbb C}^n}
\def \eps {\varepsilon}   
\def \la {\lambda}
\def \lan {\langle}   
\def \ran {\rangle}   
\def \del {\delta}   
\def \det {\operatorname{det}}
\newcommand{\id}{\operatorname{Id}}
\newcommand{\dvol}{d\operatorname{vol}}
\def \ha{ {\frac{1}{2}}}
\def \tha{ {\frac{3}{2}}}
\def \oq {\frac{1}{4}}
\def \p {\partial}
\def \rao#1 {\frac{\p}{\p #1} #1}
\numberwithin{equation}{section}
\newcommand{\vphi}{\varphi}
\newcommand{\supp}{\textrm{Supp}}
\newcommand{\beq}{\begin{equation}}
  \newcommand{\eeq}{\end{equation}}
\def \wtp {\widetilde{P}}
\def \intx {\stackrel{\circ}{X}}
\def \restr {{\left.\right|}}
\def \mrn {{\mathbb R}^n}
\def \mr {{\mathbb R}}
\def \wtpsi {{\widetilde{\Psi}}}
\def \wtw {{\widetilde{W}}}
\def \wtq {{\widetilde{Q}}}
\def \mcs {{\mathcal S}}
\def \mcp {{\mathcal P}}
\def \mcm {{\mathcal M}}
\def \mcn {{\mathcal N}}
\def \mca {{\mathcal A}}
\def \mcd {{\mathcal D}}
\def \mcf {{\mathcal F}}
\def \mcl {{\mathcal L}}
\def \mcr {{\mathcal R}}
\def \mco {{\mathcal O}}
\def \mcq {{\mathcal Q}}
\def \mh {{\mathbb H}}
\def \intx {\overset{\circ}{X}}
\def \lcg {L^2(\mr \times \Gamma)}
\def \mb {{\mathbb B}}
\def \eps {\varepsilon}
\def \la {\lambda}
\def \novt {\frac{n}{2}}
\def \nsq {\frac{n^2}{4}}
\def \lan {\langle}
\def \ran {\rangle}
\def \del {\delta}
\def \det {\operatorname{det}}
\def \tx {\tilde{x}}
\def \wth {\widetilde{H}}
\title[Inverse Scattering  with partial data]{Inverse Scattering with Partial data on  Asymptotically Hyperbolic Manifolds}
\author{Raphael Hora}
\email{rhora@math.purdue.edu}
\author{Ant\^onio S\'a Barreto}
\email{sabarre@math.purdue.edu}
\address{Department of Mathematics, Purdue University \newline
\indent 150 North University Street, West Lafayette IN  47907, USA}
\date{}
\begin{document}
\begin{abstract} We prove that the scattering matrix at all energies restricted to an open subset of the boundary determines an asymptotically hyperbolic manifold modulo isometries that are equal to the identity on the open subset where the scattering matrix is known.
\end{abstract}
\maketitle
\section{Introduction}\label{intro}

As the name suggests, asymptotically hyperbolic manifolds are modeled by the hyperbolic space at infinity.    The  ball model of the hyperbolic space is given by
\begin{gather*}
\intx= \mb^{n+1}=\{z \in \mr^{n+1}: |z|<1\} \text{ equipped with the metric } g= \frac{4 dz^2}{(1-|z|^2)^2}.
\end{gather*}
We replace $\overline{\mb^{n+1}}$ with a $C^\infty,$ connected, compact  manifold  $X$ with boundary $\p X,$ of dimension $n+1.$  We pick  a  function $x\in C^\infty (X)$  such that  $x>0$ in the interior of $X,$  $\{x=0\}= \p X$, and $dx\neq 0$ at $\partial X.$ Such a function will be called a boundary defining function.   In the model above $x=1-|z|^2.$  If   $g$ is a Riemannian metric on the interior of $X$ such that
\begin{gather}
x^2 g=H \label{defmetric}
\end{gather}
where $H$ is non-degenerate up to $\p X,$ then according to \cite{mazzmel}  $g$ is complete and its sectional curvatures approach $|dx|_{H},$ as $x\downarrow 0.$   In particular, when 
\begin{gather}
\displaystyle |dx|_{H}=1 \text{ at } \partial X, \label{asympt-curv}
\end{gather}  
the sectional curvatures converge to $-1$ at the boundary.    Such a Riemannian manifold $(X,g)$ for which \eqref{defmetric} and \eqref{asympt-curv} hold  is called asymptotically hyperbolic.  We are interested in studying the long time behavior of solutions of the wave equation on asymptotically hyperbolic manifolds, and the behavior of the metric, and hence of its geodesics, near $\p X$  influences how waves scatter. 

Notice that $g$ does not determine $H$ and it  is interesting to inquire about the behavior of $H=x^2g$ at $\p X.$ Since
  any two defining functions of $\p X,$ $x$ and $\tilde{x},$  satisfy $x=e^\omega\tilde{x},$ with $\omega\in C^\infty(X),$  the corresponding $H$ and $\widetilde{H}$  must satisfy ${H}|_{\p X}= e^{2\omega(0,y)} \wth|_{\p X}.$ Hence $H|_{\p X}$ is determined up to a conformal factor which depends on the choice of $x.$    We recall the construction of boundary normal coordinates for $g$ in this setting given by Graham \cite{Gra}. We have $H=x^2g=e^{2\omega}{\tilde{x}}^2 g,$ and hence $H=e^{2\omega} \widetilde{H}.$ Since $dx= e^\omega( \tilde{x} d\omega+ d \tilde{x}),$ we have
\begin{gather*}
|dx|^2_H=|d \tilde{x}+ \tilde{x} d\omega|_{\wth}^2= |d \tilde{x}|^2_{\wth} + {\tilde{x}}^2|d\omega|_{\wth}^2+ 2 \tilde{x} (\nabla_{\wth} \tilde{x}) \omega.
\end{gather*}
Hence,
\begin{gather*}
|dx|_{H}=1 \;  \text{ if and only if  } \;  2 (\nabla_{\wth} \tx) \omega+ \tx |d\omega|_{\wth}^2= \frac{1-|d\tx|^2_{\wth}}{\tx}, \;\ \omega\restr_{\p X}=0.
\end{gather*}
Since by assumption $|d\tilde{x}|_{\wth}=1$ at $\p X,$ this is a non-characteristic ODE, and hence it has a solution in a neighborhood of $\p X.$     So we conclude that  fixed  a representative $h_0$ of the conformal class of $H|_{\p X},$ there exists $\epsilon>0,$ a neighborhood $U_\eps$ of $\p X$ and a map $\Psi:  [0,\epsilon)\times \partial X \longrightarrow U_\eps$ such that
\begin{equation}\label{metric}
\Psi^* g=\frac{dx^2}{x^2}+\frac{h(x,y,dy)}{x^2},\quad h_{0}=h(0,y,dy),
\end{equation}
where $h$ is a one-parameter family of metrics on the boundary $\partial X.$     This construction works equally well in a neighborhood of an open subset $\Gamma \subset \p X.$

 The spectrum of the Laplacian for this type of manifolds
  was studied by Mazzeo and Mazzeo and Melrose in \cite{mazzeo1,mazzeo2,mazzmel} and more recently by Bouclet \cite{bouclet}.  The spectrum of $\Delta_g,$ denoted by $\sigma(\Delta_g)$  consists of a finite point spectrum $\sigma_{pp}(\Delta_g)$ and an absolutely continuous spectrum $\sigma_{ac}(\Delta_g)$ satisfying
\begin{gather}
\sigma_{ac}(\Delta_g)=[\frac{n^2}{4},\infty), \quad \sigma_{pp}(\Delta_g)\subset (0, \frac{n^2}{4}). \label{l2decomp}
\end{gather}

It follows from \eqref{l2decomp} and the spectral theorem that if $\im \la <<0,$ the resolvent for $\Delta_g,$ denoted by
\begin{gather}
R(\la)=\left(\Delta_g-\la^2-\frac{n^2}{4}\right)^{-1}, \label{resolvent}
\end{gather}
is a bounded operator in $L^2(X).$  The continuation of $R(\la),$ as an operator 
$$R(\la): C_0^\infty(\intx) \longrightarrow C^{-\infty}(\intx),$$
from $\im \la<<0$ to the complex plane was first studied by Mazzeo and Melrose in \cite{mazzmel}, later by Guillarmou \cite{colin2} and more recently by Vasy \cite{Vasymeromorphic}

 The structure of the generalized eigenfunctions and the analogue of the Sommerfeld radiation condition  was studied in \cite{megs,JS2}, where it was proved that  for all $f\in C^{\infty}(\partial X)$ and $\lambda\in\mathbb{R}\setminus\{0\}$, there exists a unique $v(z,\la) \in C^{\infty}(\overset{\circ}{X}),$ $z=(x,y)$ near $\p X,$ satisfying 
\begin{equation}
\begin{gathered}
\left(\Delta_{g}-\lambda^2-\frac{n^2}{4}\right) v=0\quad \text{in } \overset{\circ}{X},\\
v=x^{i\lambda+n/2}F_{+}+x^{-i\lambda+n/2}F_{-},\quad F_{\pm}\in C^{\infty}({X}),\quad F_{+}|_{\partial X}=f.
\end{gathered}\label{lapeq}
\end{equation}

The scattering matrix at energy $\lambda\neq0$ is defined as the operator
\begin{equation}
\begin{gathered}
\mca(\lambda):C^{\infty}(\partial X)\rightarrow C^{\infty}(\partial X),\\
f\longmapsto F_{-}|_{\partial X}.
\end{gathered}\label{scatmat1}
\end{equation}
This definition depends on the choice of the function $x.$   One can define $\mca(\la)$ invariantly by making it act on a certain density bundle, but we prefer to fix one function $x$ for which \eqref{metric} holds near $\p X$ and work with $\mca(\la)$ as in \eqref{scatmat1}. Notice that in view of the construction of $x,$ we are implicitly fixing a conformal representative of $\p X.$

The scattering matrix is the analogue of the Dirichlet-to-Neumann map when  $(X,g)$ is a $C^\infty$ compact Riemannian manifold with boundary $\p X.$ In this case the Laplacian $\Delta_g$ is an
elliptic differential operator and it is well known that given $f \in C^\infty(\p X),$ there exists a unique
$u \in C^\infty(X)$ such that $\Delta_g u = 0$ in $X$ and $u = f$ at $\p X.$
If $(x, y)$ are geodesic normal coordinates to $\p X,$ one can show that the entire Taylor series of
$u(x, y)$ at $\p X = \{x = 0\}$ is determined by the first two terms, $u(0, y) = f(y)$ and $\p_x u(0, y),$ and
by the equation. However, since the solution is unique, $\p_xu(0, y)$ is globally determined by the
equation and the Dirichlet data $f.$ The map $\Lambda_g : C^\infty(\p X) \longrightarrow C^\infty( \p X)$ defined by $\Lambda_g f = \p_x u\restr_{\p X}$
is called the Dirichlet-to-Neumann map.

The scattering matrix in this class of manifolds was studied in \cite{GuZwo1,megs,JS2,GraZwo,colin2}.    According to \cite{megs,JS2}, fixed $\la\in \mr,\setminus 0$ and $x,$ the operator 
$\mca(\la)$ is pseudodifferential of complex order $2i\la$ with principal symbol
\begin{gather*}
\sigma_0(\mca(\la))(y,\eta)=2^{-i\la} \frac{\Gamma(-i\la)}{\Gamma(i\la)} |\eta|_{h_0}^{2i\la},
\end{gather*}
where $\Gamma$ is the gamma function and $|\eta|_{h_0}$ is the length of the co-vector $\eta$ induced by $h_0.$   

The inverse problem of  obtaining information about a compact manifold $(X, g)$ from  the Dirichlet-to-Neumann map $\Lambda_g$ has been extensively studied, see for example \cite{uhlmann} for a survey about this question.   Joshi and S\'a Barreo \cite{JS} first studied the inverse problem  of determining an asymptotically hyperbolic manifold $(X,g)$ from the scattering matrix $\mca(\la)$ at a fixed energy $\la \in \mr\setminus 0,$ and they showed that $\mca(\la)$ determines the Taylor series of $h(x,y,dy)$ at $x=0.$  More precisely,   they proved:
\begin{thm}\label{JoSa2}(\cite{JS}) Let $(X,g)$ be an asymptotically hyperbolic manifold, let $x$ be a boundary defining function such that \eqref{metric} holds, and let $\mca(\la)$ be the scattering matrix defined in \eqref{scatmat1} for $\la\not=0.$  Let $p \in \p X$ and let $U\subset \p X$ be an open subset with $p \in U,$ and $a(\la, y,\eta)$ be the full symbol of $\mca(\la)$ with $(y,\eta)\in T^* U\setminus 0.$ Then there exists a discrete set $\mcq\subset \mr  $ such that if $\la \in \mr\setminus \mcq,$ the Taylor series of the tensor $h(x,y,dy)$ at $x=0,$ for $y \in U$ is determined by $a(\la,y,\eta).$
\end{thm}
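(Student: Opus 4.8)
The plan is to work in the boundary normal coordinates of \eqref{metric} near a fixed point $p\in U$, express the full symbol of $\mca(\la)$ in terms of the Taylor coefficients $h(x,y,dy)=\sum_{k\ge 0}h_k(y)x^k$ of the metric, and then read off the $h_k$ by inverting this expression order by order. In the coordinates of \eqref{metric} the Laplacian takes the form
\[
\Delta_g=-(x\partial_x)^2+nx\partial_x+x^2Q(x,y,\partial_y)-\frac{x}{2}\big(\partial_x\log\det h(x,y)\big)(x\partial_x),
\]
with $Q$ tangent to $\partial X$ and $Q(0,y,\partial_y)=\Delta_{h_0}$; Taylor-expanding the coefficients of $Q$ and of $\partial_x\log\det h$ in $x$ produces, at order $k$, coefficients depending on $h_0,\dots,h_k$ in which $h_k$ occurs linearly. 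The indicial roots of the model operator $-(x\partial_x)^2+nx\partial_x-\la^2-\frac{n^2}{4}$ are $\frac n2\pm i\la$, which differ by $2i\la\notin\mz$ for $\la\in\mr\setminus\{0\}$; hence the branches $x^{n/2\pm i\la}F_\pm$ of \eqref{lapeq} do not resonate, and the Taylor coefficients of $F_+$ (and the higher ones of $F_-$) are determined recursively from $f$, from $F_-|_{\partial X}$, and from the $h_k$, the recursion for the $x^{n/2\pm i\la+k}$ coefficient carrying the factor $-k(k\pm 2i\la)$, which is nonzero for $k\ge 1$.

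To compute $\mca(\la)$ microlocally over $T^*U\setminus 0$ I would apply $v$ to highly oscillatory Dirichlet data and use the pseudodifferential ($0$-)calculus, which reduces the eigenfunction equation \eqref{lapeq}, to leading order, to the ordinary differential equation $\big[-(x\partial_x)^2+nx\partial_x+x^2|\eta|_{h_0}^2-\la^2-\frac{n^2}{4}\big]\hat v\approx 0$; the substitution $\hat v=x^{n/2}w(|\eta|_{h_0}x)$ turns this into a modified Bessel equation. Its admissible solution -- the one matching the global solution of \eqref{lapeq}, built from $K_{i\la}$ -- has small-$x$ asymptotics of the form $A_+(\la,\eta)x^{n/2+i\la}+A_-(\la,\eta)x^{n/2-i\la}+O(x^{n/2+2})$ with $A_\pm$ explicit products of $\Gamma$-values and powers of $|\eta|_{h_0}$; normalizing $A_+$ to be the symbol of $f$ and reading off $A_-$ reproduces the stated principal symbol $2^{-i\la}\frac{\Gamma(-i\la)}{\Gamma(i\la)}|\eta|_{h_0}^{2i\la}$ (up to the normalization conventions). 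Since its $\la$-dependent prefactor is nonzero for $\la\in\mr\setminus\{0\}$, the principal symbol determines the function $|\eta|_{h_0}^2$ on $T^*U\setminus 0$, hence the tensor $h_0$ over $U$ -- consistent with $h_0$ being fixed by the choice of $x$, the discrete rescaling ambiguity inherent in a single power $|\eta|^{2i\la}$ being resolved by the subprincipal symbol.

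For the higher coefficients I would induct on $k$. Substituting $h=\sum_k h_k x^k$ into the reduced equation, the order-$k$ correction $\hat v_k$ of $\hat v$ satisfies an inhomogeneous modified Bessel equation whose right-hand side is assembled from $h_0,\dots,h_k$, with $h_k$ entering linearly (via the order-$k$ coefficients of $Q$ and of $\partial_x\log\det h$ acting on the already-known $\hat v_0$) and from the lower-order pieces $\hat v_0,\dots,\hat v_{k-1}$, which depend only on $h_0,\dots,h_{k-1}$. Solving by the Bessel Green's function and extracting the $x^{n/2-i\la}$-coefficient modulo $x^{n/2+i\la}(\cdots)$ yields a recursion
\[
\sigma_{-k}(\mca(\la))(y,\eta)=L_k(\la;y,\eta)\,h_k(y)+E_k\big(\la;y,\eta;h_0,\dots,h_{k-1}\big),
\]
where $E_k$ is known by the inductive hypothesis and $L_k(\la;y,\cdot)$ is linear in the symmetric $2$-tensor $h_k$, homogeneous of degree $2i\la-k$ in $\eta$, with coefficients that are explicit combinations of $\Gamma$-values arising from the Green's-function integrals of products $K_{i\la}(t)I_{i\la}(t)$. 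Whenever $L_k(\la;y,\cdot)$ is non-degenerate as a functional of $h_k$ -- which holds for every $\la$ outside the discrete set $\mcq_k\subset\mr$ on which those $\Gamma$-combinations vanish -- one solves for $h_k(y)$, $y\in U$, from the known full symbol $a(\la,y,\eta)$. Taking $\mcq=\{0\}\cup\bigcup_{k\ge 1}\mcq_k$ completes the recovery of the Taylor series of $h$ at $x=0$ over $U$.

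The hardest parts, I expect, are two. First, the microlocal reduction must be made rigorous: because the coefficients of $\Delta_g$ depend on $y$, the ``Fourier transform in $y$'' above is really a parametrix construction in the $0$-calculus, and one must verify at each stage that the commutator and remainder terms are of strictly lower order, so that they do not contaminate the coefficient of $h_k$ in $\sigma_{-k}(\mca(\la))$ -- this is where the careful bookkeeping lies. Second -- the algebraic core -- one must show that the $\Gamma$-function combinations defining $L_k$ vanish only on a discrete subset of $\mr$; this is exactly what forces the exclusion of the set $\mcq$, and verifying it requires the most delicate evaluation of the Bessel Green's-function integrals.
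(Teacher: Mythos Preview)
The paper does not prove this theorem; it is quoted from \cite{JS} (and \cite{JS2}) as a known result, so there is no ``paper's own proof'' to compare against. Your sketch is in fact a faithful outline of the argument in those references: construct a formal expansion of the generalized eigenfunction in boundary normal coordinates, read off the full symbol of $\mca(\la)$ order by order, and invert the resulting triangular system for the Taylor coefficients $h_k$, the discrete exceptional set $\mcq$ arising exactly from the zeros of the $\Gamma$-function combinations you identify.

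One remark on your bookkeeping: the microlocal reduction in \cite{JS,JS2} is not carried out via the $0$-calculus directly but by constructing an explicit asymptotic series for $F_+$ (and then $F_-$) in powers of $x$, with coefficients that are pseudodifferential operators of decreasing order applied to $f$; this avoids having to justify a ``Fourier transform in $y$'' and makes the commutator/remainder accounting straightforward. The two hard points you flag are the right ones, and they are handled in the cited papers rather than here.
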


Much more can be said in the case when $\mca(\la)$ is known for every $\la\in \mr\setminus 0.$   The main result of \cite{sa} is
\begin{thm}\label{inverse-full}(\cite{sa})
Let $(X_{1},g_1)$ and $(X_2,g_2)$ be asymptotically hyperbolic manifolds and suppose that $\p X_1=\p X_2=M$ (as manifolds). Let $x_j\in C^{\infty}(X_j)$, $j=1,2$, be a defining function of $\partial X_{j}$ for which \eqref{metric} holds, and let $A_{j}(\lambda)$, $j=1,2$, $\lambda\in\mathbb{R}\setminus\{0\},$ be the corresponding scattering matrices. Suppose that $A_{1}(\lambda)=A_{2}(\lambda)$ for every $\lambda\in\mathbb{R}\setminus\{0\}.$ Then there exists a diffeomorphism $\Psi:X_1\rightarrow X_2$, smooth up to $M$, such that
\begin{equation}\label{diffeo-global}
\Psi=Id \text{   on } M \quad \text{and } \quad \Psi^{*}g_2=g_1.
\end{equation}
\end{thm}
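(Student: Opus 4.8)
The plan is to translate the all-energies scattering data into a time-domain object, the radiation field of the wave equation on $(X,g)$, and then to reconstruct the metric in the interior by a boundary control argument. The first step is a reduction to a single manifold: fix a sequence $\la_k\in\mr\setminus\mcq$ tending to $+\infty$, where $\mcq$ is the exceptional set of Theorem~\ref{JoSa2}, and apply that theorem at each $\la_k$. The hypothesis $A_1(\la_k)=A_2(\la_k)$ then forces the Taylor series of $h_1(x,y,dy)$ and of $h_2(x,y,dy)$ at $x=0$ to agree, i.e. $x_1^2g_1$ and $x_2^2g_2$ have the same Taylor expansion at $M$ in the boundary normal coordinates \eqref{metric}. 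Composing $\Psi_1$ with $\Psi_2^{-1}$ and then correcting by a diffeomorphism equal to the identity together with all its derivatives along $M$, we may assume from now on that $X_1=X_2=X$, $\partial X=M$, $x_1=x_2=x$, and $g_1-g_2$ vanishes to infinite order at $M$; the task is then to produce $\Psi\colon X\to X$ with $\Psi|_M=\mathrm{Id}$ and $\Psi^*g_2=g_1$. This infinite-order agreement will be used twice below: to identify the target spaces of the two radiation fields, and to supply a collar near $M$ on which the two wave evolutions coincide.

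Next, pass from $A(\la)$ at all energies to the radiation field. Consider $\left(\partial_t^2+\Delta_g-\tfrac{n^2}{4}\right)u=0$ with Cauchy data $(u,\partial_t u)|_{t=0}=(\phi_1,\phi_2)\in C_0^\infty(\intx)^2$. Following Friedlander, pass to a variable $s$ with $x\sim e^{-s}$ near conformal infinity and show that $x^{-n/2}u$ admits an asymptotic expansion as $x\downarrow 0$; the forward radiation field $\mcr_+(\phi_1,\phi_2)(s,y)$ is (a derivative of) its leading coefficient, and away from the finite point spectrum $\mcr_+$ is unitary from the energy space onto $L^2(\mr_s\times M)$ and intertwines the wave group with translation in $s$. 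Defining $\mcr_-$ from the $t\to-\infty$ asymptotics, the scattering operator $\mcs:=\mcr_-\circ\mcr_+^{-1}$ is a unitary Fourier multiplier in $s$ whose symbol at frequency $\la$ is $A(\la)$ up to the explicit $\Gamma$-factor normalization recorded in the Introduction. Hence $A_1(\la)=A_2(\la)$ for all $\la\neq 0$ is equivalent to $\mcs_1=\mcs_2$, the two copies of $L^2(\mr\times M)$ being identified because $h_0$, and in fact the whole metric to infinite order at $M$, is common. Using the time-reversal symmetry $u(t,\cdot)\mapsto u(-t,\cdot)$, which relates $\mcr_-$ to $\mcr_+$ through $s\mapsto -s$ and the involution $J(\phi_1,\phi_2)=(\phi_1,-\phi_2)$, the equality $\mcs_1=\mcs_2$ says exactly that $W:=\mcr_{2,+}^{-1}\mcr_{1,+}$ commutes with $J$ while intertwining the two wave dynamics.

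For the reconstruction, note that since $g_1=g_2$ near $M$ the two wave evolutions agree on a collar, and finite speed of propagation determines the part of $\mcr_+$ produced by waves that have not reached the region where the metrics may differ; combined with the intertwining property this should force $W=\mathrm{Id}$ on Cauchy data supported in a fixed compact subset of $\intx$, i.e. the forward radiation fields of $g_1$ and $g_2$ coincide there. Now rescale by $\bar g:=x^2g$: this makes $M$ a genuine boundary at finite distance and converts the asymptotically hyperbolic wave operator into a wave operator on the compact manifold $(X,\bar g)$ with an explicit potential singular only at $x=0$, so the radiation-field data on compactly supported interior sources becomes equivalent to the hyperbolic Dirichlet-to-Neumann (response) operator of a mixed problem on $(X,\bar g)$. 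Because $\bar g_1$ and $\bar g_2$ agree to infinite order at $M$ these response operators coincide, and the Belishev-Kurylev boundary control method, in the Katchalov-Kurylev-Lassas formulation and using Tataru's sharp unique continuation theorem to identify domains of influence, reconstructs $(X,\bar g_1)$ and $(X,\bar g_2)$ as one Riemannian manifold up to an isometry fixing $M$ pointwise; undoing the rescaling yields the desired $\Psi$.

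The main obstacle is the passage, in the last step, from $\mcs_1=\mcs_2$ to the coincidence of the forward radiation fields on interior data. This is not a formal consequence of abstract scattering theory: the conjugation relation $\mcs=\rho\,\mcr_+ J\mcr_+^{-1}$ alone leaves a unitary ambiguity commuting with $J$, and eliminating it genuinely requires the asymptotically hyperbolic geometry together with the a priori knowledge of the metric near $M$ obtained in the first step, via finite speed of propagation, propagation of singularities, and a careful support analysis of the radiation field. A close second difficulty is building the radiation-field apparatus rigorously: existence and regularity of the $x\downarrow 0$ expansion, unitarity and the translation representation, and the exact identification of the Fourier symbol of $\mcs$ with $A(\la)$.
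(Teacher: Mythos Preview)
Your outline has the right overall shape --- translate $A_1(\la)=A_2(\la)$ into equality of the dynamical scattering operators via the radiation fields, recover the metric on a collar of $M$, and then invoke a boundary-control type result on the compact remainder --- but there is a genuine gap at the step you yourself flag as the main obstacle, and the workaround you propose does not close it.

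The problem is the sentence ``since $g_1=g_2$ near $M$ the two wave evolutions agree on a collar.'' You have only established, via Theorem~\ref{JoSa2}, that the Taylor series of $h_1$ and $h_2$ agree at $x=0$; this is much weaker than $g_1=g_2$ on an open collar, and the wave evolutions have no reason to coincide anywhere in the interior. Consequently your argument that $W=\mcr_{2,+}^{-1}\mcr_{1,+}$ equals the identity on compactly supported data never gets off the ground, and the subsequent rescaling $\bar g=x^2g$ does not help: the response operator of the rescaled problem still depends on the metric in the whole interior, not just on its jet at $M$, so infinite-order agreement at $M$ does not give equality of response operators. (There is also the technical issue that the rescaled equation carries a potential of the type $x^{-2}$, so the Belishev--Kurylev machinery does not apply to it directly.)

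What the paper (following \cite{sa}) actually does to bridge this gap is quite different and more constructive. One first shows that the closed subspace $\mcm^+=\mcr_+(0,L^2_{ac}(X))\subset L^2(\mr\times M)$, together with the norm $\mcn(\mcr_+(0,f))=\|f\|_{L^2(X)}$, is determined by $\mcs$ alone (using the identity $\mcr_+(0,f)=\tfrac12(F+\mcs F^*)$ with $F^*(s,y)=F(-s,y)$). Second, the support theorem (Theorem~\ref{support-full}) says $\mcr_+(0,f)(s,y)=0$ for $s\le\log x_1$ iff $f$ vanishes on $\{x<x_1\}$; hence the $\mcn$-orthogonal projector $\mcp_{x_1}$ onto $\{F\in\mcm^+:F=0\text{ for }s\le\log x_1\}$ satisfies $\mcp_{x_1}\mcr_+(0,f)=\mcr_+(0,\chi_{x_1}f)$, and $\mcp_{x_1}$ is itself determined by $\mcs$. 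The decisive step is then a singularity computation: applying $\mcs=\mcr_+\mcr_-^{-1}$ to $\mcp_{x_1}F$ and reading off the leading conormal singularity at $s=\log x_1$ gives
\[
\tfrac12\,x_1^{-n/2}\,f(x_1,y)\,\frac{|h|^{1/4}(x_1,y)}{|h|^{1/4}(0,y)}\,(s-\log x_1)_+^0+\text{smoother},
\]
which recovers, for every $x_1\in(0,\eps)$, both $|h|^{1/4}(x_1,y)$ and the map $F\mapsto f(x_1,y)=\mcr_-^{-1}F(x_1,y)$. This yields $h_1=h_2$ on an honest open collar (not just formally at $x=0$) and equality of the radiation-field kernels there. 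Only then does one restrict to the compact manifolds $X_{j,\eps}=\{x_j\ge\eps\}$ and feed the resulting boundary spectral data into the Kurylev--Lassas theorem to obtain the global isometry; no conformal rescaling is used. The piece your proposal is missing is precisely this projector/singularity analysis, which is what converts the abstract equality $\mcs_1=\mcs_2$ into pointwise information about the metric on a collar.
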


This problem is related to the question of reconstructing a compact Riemannian manifold with boundary from the Dirichlet-to-Neumann map for the wave equation that was first solved by  Belishev and Kurylev \cite{BK},  using the Boundary Control Method, and a unique continuation result later proved by Tataru \cite{TAT}. Different proofs, which also rely on the result of Tataru, were given in \cite{KaKuLa,KuLa1}.   The main idea of the proof of Theorem \ref{inverse-full} was to study the scattering matrix in terms of the wave equation, using Friedlander radiation fields,
  and adapt the Boundary Control Method of Belishev and Kurylev \cite{BK} and Tataru \cite{TAT} to this setting.

Our goal in this paper is to prove the analogue of Theorem \ref{inverse-full} when $\mca(\la)$ is known only on an open set of $\Gamma \subset \p X.$  We define the restriction of $\mca(\la)$ to $\Gamma$ as the operator
\begin{gather}
\begin{gathered}
\mca_{\Gamma}(\la):  C_0^\infty(\Gamma) \longrightarrow C^\infty(\Gamma) \\
f \longmapsto (\mca(\la) f)\restr_{\Gamma}.
\end{gathered}\label{agamma}
\end{gather}

Our main result is  the following
\begin{thm}\label{INV}
Let $(X_{1},g_1)$ and $(X_2,g_2)$ be asymptotically hyperbolic manifolds and suppose there exists an open set  $\Gamma\subset(\partial X_{1}\cap \p X_2)$ (as manifolds)  such that $\p X_j\setminus \Gamma$ do not have empty interior, for $j=1,2.$  Let $x_j\in C^{\infty}(X_j)$, $j=1,2$, be a defining function of $\partial X_{j}$ for which \eqref{metric} holds, and let $\mca_{j,\Gamma}(\lambda)$, $j=1,2$, $\lambda\in\mathbb{R}\setminus\{0\},$ be the corresponding scattering matrices restricted to $\Gamma.$ Suppose that $\mca_{1,\Gamma}(\lambda)=\mca_{2,\Gamma}(\lambda)$ for every $\lambda\in\mathbb{R}\setminus\{0\}.$ Then there exists a diffeomorphism $\Psi:X_1\rightarrow X_2$, smooth up to $\partial X_1$, such that
\begin{equation}\label{diffeo}
\Psi=Id \text{   on } \Gamma\quad \text{and } \quad \Psi^{*}g_2=g_1.
\end{equation}
\end{thm}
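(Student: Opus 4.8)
The plan is to adapt to the partial--data situation the wave--equation strategy used in \cite{sa} for Theorem \ref{inverse-full}: reformulate the hypothesis in terms of Friedlander radiation fields, and then run the Boundary Control Method of Belishev and Kurylev \cite{BK} with observation region $\Gamma$, using Tataru's unique continuation theorem \cite{TAT} in place of exact observability. First, one translates the scattering data into wave data. For an asymptotically hyperbolic manifold with $x$ normalized as in \eqref{metric}, the forward and backward radiation fields $\mcr_{j,\pm}$ for $\left(\p_t^2+\Delta_{g_j}-\frac{n^2}{4}\right)u=0$ are, up to a constant, unitary maps from the energy space of Cauchy data onto $L^2(\mr_s\times\p X_j)$, and the scattering operator $\mcs_j=\mcr_{j,+}\circ\mcr_{j,-}^{-1}$ has a Schwartz kernel whose partial Fourier transform in $s$ is, up to an explicit elementary factor, the scattering matrix $\mca_j(\la)$ (see \cite{megs,JS2,sa}). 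Knowing $\mca_{j,\Gamma}(\la)$ for all $\la\in\mr\setminus 0$ therefore yields the pairings $\langle\mcs_j\phi,\psi\rangle_{L^2(\mr\times\p X_j)}$ for all $\phi,\psi\in C_0^\infty(\mr\times\Gamma)$ --- equivalently, the Schwartz kernel of $\mcs_j$ restricted to $(\mr\times\Gamma)\times(\mr\times\Gamma)$ --- and these coincide for $j=1,2$.

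Next, one identifies the two manifolds to infinite order near $\Gamma$ and sets up the control. By Theorem \ref{JoSa2}, applied on $\Gamma$ at some energy outside the exceptional discrete set (possible since the full symbol of $\mca_{j,\Gamma}$ is known at every energy), the data determine the full Taylor expansion at $x=0$, for $y\in\Gamma$, of the tensor $h_j(x,y,dy)$ in \eqref{metric}; hence there is a diffeomorphism of collar neighbourhoods of $\Gamma$ in $X_1$ and $X_2$, equal to $Id$ on $\Gamma$, respecting the normal form \eqref{metric}, and identifying $g_1$ and $g_2$ to infinite order at $x=0$. Using this comparison together with finite speed of propagation, one derives, exactly as in the compact boundary case, a Blagoveshchenskii--type energy identity that expresses the inner products $\langle u_j^\phi(t),u_j^\psi(t)\rangle_{L^2(X_j)}$ of the controlled waves $u_j^\phi:=\mcr_{j,-}^{*}\phi$ solely in terms of $\langle\mcs_j\phi,\psi\rangle$ and $\langle\phi,\psi\rangle$; consequently these inner products agree for $j=1,2$ whenever $\phi,\psi$ are supported in $\mr\times\Gamma$. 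The hypothesis that $\p X_j\setminus\Gamma$ has nonempty interior plays a technical role here, keeping the supports that enter the comparison away from the inaccessible part of the boundary at infinity.

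The geometric heart of the argument is that on an asymptotically hyperbolic manifold every interior point lies on a complete geodesic with an endpoint at infinity in $\Gamma$; equivalently, in the compactified picture the domains of influence of $\mr\times\Gamma$ for the wave equation exhaust $\intx$ as the radiation time tends to $+\infty$. Combined with Tataru's unique continuation theorem \cite{TAT}, this shows that the controlled waves $\{u_j^\phi:\phi\in C_0^\infty(\mr\times\Gamma)\}$, filtered by radiation time, are dense in $L^2$ over the corresponding domains of influence, hence in all of $L^2(X_j)$ in the limit. Having both this family and all of its mutual inner products, one applies the Boundary Control reconstruction procedure adapted to asymptotically hyperbolic manifolds in \cite{sa} to produce an isometry $\Psi:X_1\to X_2$, smooth up to $\p X_1$, with $\Psi=Id$ on $\Gamma$ and $\Psi^{*}g_2=g_1$; the behaviour near $\Gamma$ and compatibility with \eqref{metric} come from the infinite order identification above.

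The step I expect to be the main obstacle is the controllability/exhaustion step: controlling from the \emph{partial} region $\Gamma$ requires a precise matching of the radiation--time filtration with the domains of influence of $\Gamma$, and a careful application of Tataru's theorem near the boundary at infinity, which in the compactification behaves like a corner where the relevant spacelike hypersurfaces degenerate. A secondary difficulty is that $(X_1,g_1)$ and $(X_2,g_2)$ agree only to infinite order, not exactly, on a collar of $\Gamma$, so the support and comparison arguments of the second step must be carried out with control on the resulting smoothing errors.
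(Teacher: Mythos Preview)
Your outline captures the correct broad philosophy (radiation fields, boundary control, Tataru), but it diverges from the paper's proof at two essential points, and at each of them your sketch has a real gap rather than just a difference of taste.

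First, the ``controllability/exhaustion'' step that you flag as the main obstacle is in fact where most of the new work lies, and it cannot be done by a ``careful application of Tataru's theorem'' alone. Tataru's theorem applies in the interior, but the compactified problem degenerates at the corner $\{x=0\}\cap\{s=s_0\}$, and the relevant hypersurfaces are characteristic there. The paper must prove a \emph{local support theorem} (Theorem~\ref{support-local}) from scratch: after the change of variables $\mu=e^{-s_-/2}$, $\nu=e^{s_+/2}$, one shows the rescaled solution extends by zero across $\{\mu=0\}\cup\{\nu=0\}$ (Proposition~\ref{step-1}), proves a bespoke Carleman estimate for the degenerate operator $\partial_\mu\partial_\nu-\mu\nu\Delta_h+\cdots$ (Lemma~\ref{unique-cont-2}), and then bootstraps with H\"ormander-type pseudoconvexity (Proposition~\ref{step-2}) before Tataru's theorem finally becomes applicable (Proposition~\ref{step-3}). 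Your sketch also asserts that every interior point lies on a geodesic with endpoint in $\Gamma$; this need not hold (the manifold may be trapping), and the paper instead uses the support theorem plus connectedness (Corollary~\ref{uniqueinf}).

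Second, and more structurally, the paper does \emph{not} run the Boundary Control reconstruction globally on the asymptotically hyperbolic manifold as you propose. Instead it (i) builds from $\mcs|_{\mr\times\Gamma}$ a Hilbert space $\mcm(\Gamma)^\pm$ with the intrinsic norm $\mcn(\mcr_\pm(0,f)|_{\mr\times\Gamma})=\|f\|_{L^2(X)}$ (Theorem~\ref{rangedet}), (ii) analyzes orthogonal projectors onto $\{F=0,\ s\le\log x_1\}$ to read off, via the conormal singularity of $\mcr_+\mcr_-^{-1}(\mcp_{x_1}^-F)$ at $s=\log x_1$, the \emph{exact} value $|h|^{1/4}(x_1,y)f(x_1,y)$ on a collar (Lemmas~\ref{87}--\ref{lema}, Corollary~\ref{82}), yielding $h_1=h_2$ exactly on $[0,\epsilon)\times\Gamma$ --- not merely to infinite order as Theorem~\ref{JoSa2} gives --- and (iii) then truncates to the compact manifolds $X_{j,\epsilon}=\{x_j\ge\epsilon\}$, shows that the Calder\'on data on $\Gamma_\epsilon$ agree, and invokes the compact partial-data result of Kurylev--Lassas \cite{KuLa} to extend the isometry globally. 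Your ``secondary difficulty'' about only having infinite-order agreement is thus a genuine gap: without the exact collar identification you cannot set up the compact reduction, and your proposed direct global BC reconstruction would have to reprove \cite{KuLa} in an unbounded setting with a degenerate boundary, which is not what \cite{sa} does and is not clearly feasible.
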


The  reconstruction of a compact manifold in the case where the Dirichlet-to-Neumann map is only known on part of the boundary was carried out by  Kurylev and Lassas \cite{KuLa} using a modification of the Boundary Control Method. As in the proof of Theorem \ref{inverse-full},  we will adapt the Boundary Control Methods to this setting by using the dynamical formulation of the scattering matrix in terms of Friedlander radiation fields.

Equation \eqref{l2decomp} gives a decomposition of  $L^2(X)$ 
\begin{gather*}
L^2(X)=L^2_{pp}(X) \oplus L^2_{ac}(X),
\end{gather*}
where $L^2_{pp}(X)$ is the finite dimensional space spanned by the eigenfunctions of $\Delta_g$ and $L^2_{ac}(X)$ is the orthogonal complement of $L^2_{pp}(X).$
It follows from Corollary 6.3 of \cite{sa} that
\begin{gather*}
C_0^\infty(\intx) \cap L^2_{ac}(X) \text{ is dense in } L^2_{ac}(X).
\end{gather*}

 Let $u$ satisfy the wave equation
\begin{equation}\label{wave}
\begin{gathered}
\left(D_{t}^2-\Delta_{g}+\frac{n^2}{2}\right)u(t,z)=0\quad \text{on } \mathbb{R}_{+}\times \intx,\\
u(0,z)=f_1(z),\quad D_{t}u(0,z)=f_2(z), \quad f_1, f_2 \in C_0^\infty(\intx).
\end{gathered}
\end{equation}
This equation has a conserved energy given by
\begin{equation}
\begin{gathered}
E(u,\p_t u)(t)=  \int_X \left( |d u(t)|^2- \frac{n^2}{4} |u(t)|^2 + |\p_t u(t)|^2\right) \dvol_g, \\
E(u,\p_t u)(0)=E(f_1,f_2)=\int_X \left( |d f_1|^2- \frac{n^2}{4} |f_1|^2 + |f_2|^2\right) \dvol_g,
\end{gathered}
\end{equation}
$E(f_1,f_2)$ is positive only when projected onto $L^2_{ac}(X).$  As in \cite{sa}, we define
the energy space
\begin{gather*}
H_E(X)=\{ (f_1,f_2):  f_1, f_2 \in L^2(X), \;\ df_1 \in L^2(X) \text{ and }  E(f_1,f_2)<\infty\}
\end{gather*}
and the projector 

\begin{align*}
\mcp_{ac}: L^2(X) \longrightarrow L^2_{ac}(X) \\
 f \longmapsto f-\sum_{j=1}^N \lan f,\phi_j\ran \phi_j,
\end{align*}
where $\{\phi_j, 1\leq j \leq N\}$ are the eigenfunctions of $\Delta_g.$ Let
\begin{gather*}
E_{ac}(X)= \mcp_{ac}(H_E(X)).
\end{gather*}
The wave group induces a strongly continuous group of unitary operators:
\begin{gather*}
U(t): E_{ac}(X) \longrightarrow E_{ac}(X) \\\
(f_1,f_2) \longmapsto (u(t), \p_t u(t)).
\end{gather*}

Following Friedlander \cite{fried0,fried1}, the radiation fields for asymptotically hyperbolic manifolds were defined in \cite{sa}. It was shown in \cite{sa} that  if $x$ is a boundary defining function of $\p X$ for which \eqref{metric} holds, and $(f_1,f_2)\in C_0^\infty(X) \cap E_{ac}(X),$
 then
\begin{gather}
 V_+(x,s,y)= x^{-n/2}u(s-\log x,x,y) \in C^\infty([0,\eps)_x \times \mr_s \times \p X), \label{defv+}
 \end{gather}
and the forward radiation field is defined as the map
\begin{equation}
\begin{gathered}
\mathcal{R}_{+}:C^{\infty}_{0}(\overset{\circ}{X})\times C^{\infty}_{0}(\overset{\circ}{X})\longrightarrow C^{\infty}(\mathbb{R}\times \partial X),\\
\mathcal{R}_{+}(f_1,f_2)(s,y)=D_sV_+(0,s,y)=\lim_{x\downarrow 0} x^{-\frac{n}{2}} D_s u(s-\log x,x,y).
\end{gathered}\label{forad}
\end{equation}
Similarly, if one considers the behavior of $u$ for $t<0,$ again with the initial data 
$(f_1,f_2)\in C_0^\infty \cap E_{ac}(X),$ then  
\begin{gather*}
 V_-(x,s,y)= x^{-n/2}u(s+\log x,x,y) \in C^\infty([0,\eps)_x \times \mr_s \times \p X),
 \end{gather*}
 and thus defines the backward radiation field
\begin{equation}
\begin{gathered}
\mathcal{R}_{-}:C_{0}^{\infty}(\overset{\circ}{X})\times C_{0}^{\infty}(\overset{\circ}{X})\longrightarrow C^{\infty}(\mathbb{R}\times \partial X),\\
\mathcal{R}_{-}(f_1,f_2)(s,y)=D_{s}V_{-}(0,s,y).
\end{gathered} \label{backwrad}
\end{equation}
It was shown in \cite{sa} that the maps $\mcr_{\pm}$ extend to unitary operators 
\begin{gather}
\begin{gathered}
\mcr_{\pm}: E_{ac}(X) \longrightarrow L^2(\mr \times \p X) \\
(f_1,f_2) \longmapsto \mcr_{\pm}(f_1,f_2),
\end{gathered} \label{unitary}
\end{gather}
which are translation representations of the wave group as in the Lax-Phillips theory \cite{lp}, i.e.
\begin{gather}
\mcr_{\pm}(U(T)(f_1,f_2))(s,y)= \mcr_{\pm}(f_1,f_2)(s+T,y).\label{translrep}
\end{gather}

One can define the scattering operator
\begin{gather}
\begin{gathered}
\mathcal{S}:L^2(\mathbb{R}\times\partial X)\longrightarrow L^2(\mathbb{R}\times\partial X),\\ \mathcal{S}=\mathcal{R}_{+}\circ\mathcal{R}_{-}^{-1},
\end{gathered}\label{scatmat2}
\end{gather}
which is unitary in $L^2(\partial X\times \mathbb{R})$ and  commutes with translations in view of \eqref{translrep}. The results of \cite{JS} and \cite{Gui1} were used in  \cite{sa} to show that the stationary and dynamical definitions of the scattering matrix are equivalent.  If  $\mca(\la)$ is defined by
\eqref{scatmat1} and $\mcs$ is defined by  \eqref{scatmat2}, then
\begin{gather}
\mca=\mathcal{F}\mathcal{S}\mathcal{F}^{-1},\label{relation}
\end{gather}
where $\mcf$ denotes the partial Fourier transform in the variable $s.$

The relationship between $\mcr_\pm$ and $\mca(\la)$ was made more explicit in \cite{sa}, where it was shown (see equation (6.12) of \cite{sa}) that if one takes the partial Fourier transform of $\mcr_{+}$ with respect to the variable $s,$   then if one denotes
\begin{gather} 
\mcf(\mcr_+(0,h))(\la, y)= i\la \int_{X} E\left(\novt +i\la, y, z\right) h(z) \dvol_{g}(z), \label{defnE}
\end{gather}
then for any $ f \in C^\infty(\p X),$ 
\begin{gather}
u(z)=\int_{\p X} E\left( \novt+i\la,y',z\right) f(y') \; \dvol_{h_0}(y'), \label{transE1}
\end{gather}
satisfies
\begin{gather}
\begin{gathered}
(\Delta_g-\nsq-\la^2) u=0, \\
u(x,y) = x^{\novt +i\la} F_+ + x^{\novt-i\la} F_-, \text{ where }  F_+\restr_{\p X}=f, \;\  F_-\restr_{\p X}=\mca(\la)f .
\end{gathered}\label{transE2}
\end{gather}
 
\subsection{Acknowledgements}
\label{sec:acknowledgements}
Hora and S\'a Barreto are grateful to the NSF for the support provided under grant DMS-0901334.

\section{The Local Support Theorem}

 The purpose of this section is to establish a relationship between the support of $\mcr_+(0,f)$ and the support of $f.$  Again, it is useful to recall the analogy with the Dirichlet-to-Neumann map on compact manifolds with boundary. If $(X,g)$ is a compact Riemannian manifold with boundary, the finite speed of propagation for the wave equation implies that if $\Gamma\subset \p X$ is an open subset of the boundary and $u(t,z)$ is a solution to the Dirichlet problem for the wave equation
\begin{gather*}
(\p_t^2 +\Delta_g)u=0 \text{ in } (-T,T)\times \intx  \\
u(0,z)=0, \;\  \p_t u(0,z)=f(z) \in C^\infty(\intx), \\
u(t,z)|_{(-T,T)\times \p X}=0
\end{gather*}
then the normal derivative $\p_\mu u(t,z)|_{(-T',T')\times \p X}=0,$ provided  $T'\in (0,T),$ $z\in \overline{\Gamma}$ and $d_g(\supp f, \overline{\Gamma})<T',$ where $d_g$ denotes the distance with respect to the metric $g.$  The converse of this result holds, and is a consequence of a theorem of Tataru \cite{TAT}, see also
\cite{hormander-pa,robbianozuilly,tataru1}. 

 In the current setting, the manifold is not compact and the distance from a point in the interior of $X$ to the boundary is infinite.    If  $x$ is chosen such that the metric $g$ is in the form \eqref{metric}, the corresponding Lorentzian metric takes the form
\begin{gather*}
G_L= dt^2-\frac{dx^2}{x^2} -\frac{h(x,y,dy)}{x^2}.
\end{gather*}

From the construction of the defining function $x$ for which \eqref{metric} holds from \cite{Gra} (see section \ref{intro}),  given two points along the curves normal to $\Gamma,$  the distance between $(x,y)$ and $(\alpha,y),$ with $\alpha<x$  small enough,  is $-\log (\frac{x}{\alpha}),$   and therefore,  if time measures the arc-length along these geodesics, one has $s=t+\log x=\log \alpha.$   In the case of data given on the whole boundary, the following support theorem was proved in  \cite{sa}:
\begin{thm}\label{support-full} A function $f \in L^2_{ac}(X)$ is such that  $\mcr_{+}(0,f)(s,y)=0$ for every $s\leq s_0 <<0$  and $y \in \p X$ if and only if  $f(x,y)=0$ if $x\leq e^{s_0},$ $y \in \p X.$
\end{thm}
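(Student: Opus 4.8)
\emph{Strategy and the direct implication.} The two directions are of opposite character: ``$f=0$ for $x\le e^{s_0}\Rightarrow\mcr_+(0,f)=0$ for $s\le s_0$'' is finite speed of propagation, while the converse is a unique continuation statement at $\p X$. For the first, let $f\in L^2_{ac}(X)$ vanish for $x\le e^{s_0}$, so $\supp f\subseteq\{x\ge e^{s_0}\}$, a compact subset of $\intx$ on which $g$ is a smooth complete metric. The wave group of \eqref{wave} has finite speed, depending only on the principal part $D_t^2-\Delta_g$, so $u(t,\cdot)$ vanishes off $\{z:d_g(z,\supp f)\le|t|\}$ (classical for $f\in C_0^\infty(\intx)\cap L^2_{ac}(X)$; the general case by density, Corollary 6.3 of \cite{sa}). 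For $x<e^{s_0}$, \eqref{metric} gives $d_g((x,y),\supp f)\ge d_g((x,y),\{x=e^{s_0}\})=\log(e^{s_0}/x)=s_0-\log x$, since arclength dominates $|d\log x|$. Hence for fixed $s<s_0$ and small $x>0$ the time $t=s-\log x$ is positive with $|t|=s-\log x<s_0-\log x\le d_g((x,y),\supp f)$, so $u(t,x,y)=0$, $V_+(x,s,y)=0$, and letting $x\downarrow0$ in \eqref{forad}, $\mcr_+(0,f)(s,y)=0$ for all $s\le s_0$.

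\emph{The converse.} Assume $\mcr_+(0,f)(s,y)=0$ for $s\le s_0$. Convolving the data in $t$ with $\varphi\in C_0^\infty([0,\eps))$ replaces $(0,f)$ by the smoother $\int\varphi(\tau)U(\tau)(0,f)\,d\tau$, whose radiation field is $\varphi*\mcr_+(0,f)$ by \eqref{translrep}, still vanishing for $s\le s_0-\eps$; so, after this and a density reduction (letting $\eps\downarrow0$ at the end), I may take $f\in C_0^\infty(\intx)\cap L^2_{ac}(X)$, $\supp f\subseteq\{x\ge\delta_0\}$. The direct implication gives $V_+(0,s,y)=0$ for $s<\log\delta_0$, which fixes the constant of integration, so that $D_sV_+(0,\cdot)=\mcr_+(0,f)=0$ on $(-\infty,s_0]$ upgrades to $V_+(0,\cdot)=0$ on $(-\infty,s_0]$. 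Writing $w(s,x,y)=x^{-n/2}u(s-\log x,x,y)\in C^\infty([0,\eps_0)_x\times\mr_s\times\p X)$ as in \eqref{defv+}, we thus have $w|_{x=0}=0$ on $\{s\le s_0\}$. Near $\p X$, $w$ solves a second-order hyperbolic equation $Pw=0$, smooth up to $x=0$, with principal part (after dividing by $x$) $2\p_s\p_x+x(\p_x^2-\Delta_{h(x)})$ near $x=0$; thus $\{x=0\}$ is a \emph{characteristic} hypersurface for $P$ -- this is exactly the structure behind radiation fields (cf. \cite{fried0,fried1,megs,sa}). Evaluating $Pw=0$ on $\{x=0\}$ and using $w|_{x=0}=\p_sw|_{x=0}=0$ there (for $s\le s_0$) leaves a homogeneous first-order ODE in $s$ for $\p_xw|_{x=0}$; since $\p_xw|_{x=0}=0$ for $s<\log\delta_0$ already (finite speed: $u(0,\cdot)=0$ and $w=0$ near $x=0$ for $s<\log\delta_0$), ODE uniqueness gives $\p_xw|_{x=0}=0$ on $\{s\le s_0\}$ as well. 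The energy estimate for the characteristic Cauchy problem for $P$ on $\{s\le s_0,\ 0\le x\le\eps_0\}$ -- boundary terms at $\{x=0\}$ vanishing by the above, the one at $s=-\infty$ by the finite-speed vanishing of $w$, and Gronwall in $x$ closing the estimate -- then forces $w\equiv0$, hence $u\equiv0$, on $\{s<s_0,\ 0<x<\eps_0\}$. Since $s_0\ll0$, $e^{s_0}$ is below the width $\eps_0$ of the collar on which \eqref{metric} holds, and $\{t\le0,\ x<e^{s_0}\}\subset\{t+\log x<s_0\}$, so $u$ vanishes on a one-sided neighbourhood of $\{t=0,\ x<e^{s_0}\}$; together with $u(0,\cdot)\equiv0$ this gives $f=\p_tu(0,\cdot)=0$ for $x<e^{s_0}$, i.e. $f=0$ on $\{x\le e^{s_0}\}$.

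\emph{Main obstacle.} The heart of the argument is propagating the vanishing of $w|_{x=0}$ from $\{x=0,\ s\le s_0\}$ into the collar: because $\{x=0\}$ is \emph{characteristic} for $P$, this is neither finite speed nor Holmgren but requires a genuine unique continuation input. I would carry out the energy/Carleman estimate directly, exploiting that the relevant region lies in the collar where the metric is the explicit model \eqref{metric}, so only the equation $Pw=0$ near $\p X$ is used -- note that $u$ restricted to the collar is \emph{not} determined by $f$ restricted to the collar, so one must keep the globally defined solution of \eqref{wave} but use only its equation near the boundary. The delicate points are the non-compact $s$-direction (handled by the finite-speed vanishing of $w$ for $s$ very negative) and the sign of the flux through the characteristic face $\{s=s_0\}$. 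Alternatively, after the conformal rescaling that flattens $\p X$ to a finite boundary one may quote Tataru's unique continuation theorem \cite{TAT}, its pseudoconvexity hypothesis being ensured by the normal form \eqref{metric}. Everything else -- the regularization and density reductions, and identifying the region where $u$ is forced to vanish -- is routine.
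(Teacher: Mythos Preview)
Your outline has the right shape --- finite speed for the easy direction, unique continuation across the characteristic surface $\{x=0\}$ for the hard one --- and you correctly identify the crux. But the converse has two real gaps.

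\emph{Regularization.} Convolving $U(\tau)(0,f)$ against $\varphi\in C_0^\infty([0,\eps))$ does not produce data of the form $(0,\tilde f)$ with $\tilde f\in C_0^\infty(\intx)$: the first component is nonzero (the solution with data $(0,f)$ is odd in $t$, so you would need $\varphi$ even), and even then the resulting $\tilde f$ is smooth in $\intx$ but not compactly supported. A ``density reduction'' cannot fix this, because approximating $f$ in $L^2$ by $C_0^\infty$ functions does not preserve the hypothesis $\mcr_+(0,f)=0$ on $\{s\le s_0\}$. The paper's regularization (convolving $\mcr_+(0,f)$ in $s$ with an even mollifier) yields $\tilde f_\delta$ with $(\Delta_g-\tfrac{n^2}{4})^k\tilde f_\delta\in L^2$ for all $k$, hence $\tilde f_\delta\in C^\infty(\intx)$ by elliptic regularity; but $\tilde f_\delta$ is \emph{not} compactly supported, so you cannot invoke ``$w=0$ for $s<\log\delta_0$'' to anchor your ODE or your energy estimate at $s=-\infty$.

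\emph{The unique continuation step.} Your sentence ``the energy estimate for the characteristic Cauchy problem \ldots\ forces $w\equiv 0$'' is exactly where the work lies, and a naive energy identity does not close: multiplying $Pw=0$ by $\p_x w$ and integrating over $\{0\le x\le\eps_0,\ s\le s_0\}$, the flux through the characteristic face $\{s=s_0\}$ has no sign, and the degeneracy at $x=0$ kills the helpful boundary terms there. Nor does Tataru's theorem apply directly: after any conformal rescaling the operator still degenerates at $x=0$, and $\{x=0\}$ is not a hypersurface of the type his result treats. What the paper actually does (for the more general local statement, Theorem \ref{support-local}, which contains this one) is: pass to the compactifying coordinates $\mu=e^{-s_-/2}$, $\nu=e^{s_+/2}$ so that the operator becomes $\p_\mu\p_\nu-\mu\nu\Delta_h+\cdots$ and the bad corner $\{x=0,\ s\to-\infty\}$ becomes the finite wedge $\{\mu=0\}\cup\{\nu=0\}$; use the energy estimate \eqref{enerest} and an iteration on the transport equations along $\{\mu=0\}$ to show $W$ extends by zero across that wedge; and then prove a genuine Carleman estimate (Lemma \ref{unique-cont-2}) with weight $\rho^{-k}$, $\rho=r+|y|^2$, to push the vanishing inward. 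After that local step, one still needs H\"ormander-type pseudoconvex continuation (Proposition \ref{step-2}) and Tataru's theorem in the interior (Proposition \ref{step-3}), iterated, to reach the full region $\{x\le e^{s_0}\}$. The single energy/Gronwall step you propose does not substitute for this chain.
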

One should remark that the fact this result is true for $f\in L^2_{ac}(X)$ is particular to the hyperbolic setting. For instance, the analogue of this result is not true on Euclidean space; in that case one needs to assume that $f$ is rapidly decaying, i.e. $f$ vanishes to infinite order at $\p X.$  In the case where $(X,g)$ is the hyperbolic space, this result is due to Lax and Phillips \cite{lp1,lp2,lp3}, see also the work of Helgason \cite{helgason1,helgason2,helgason3,helgason4}.  Our goal  for this section is to prove the local version of this result. 
First we observe that if $\Gamma \subset \p X$ is an open subset, the construction of the defining function works just as well if one restricts it to  $\Gamma.$ 
We will prove
\begin{thm}\label{support-local}     Let $\Gamma\subset \p X$ be an open subset and $s_0\in \mr.$ A function  $f \in L^2_{ac}(X)$ is such that  $\mcr_{+}(0,f)(s,y)=0$ for every $s\leq s_0$ and $y\in \ogamma$ if and only if  there exists $\eps>0$ such that $f=0$ a.e. in the set
 \begin{gather}
\mcd_{s_0}(\Gamma)=\{z\in X:  \exists \; w=(\alpha,y') \text{ with }  0<\alpha <\eps \leq e^{s_0} \text{ and } y'\in \Gamma, \;\ d_g(z,w) < \log(\frac{e^{s_0}}{\alpha})\}, \label{mcdnew}
\end{gather}
where $d_g$ denotes the distance function with respect to the metric $g.$
\end{thm}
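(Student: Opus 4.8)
The plan is to deduce the local support theorem from the global one (Theorem \ref{support-full}) together with finite speed of propagation and Tataru's unique continuation theorem, following the strategy used by Kurylev and Lassas for the Dirichlet-to-Neumann map. First I would establish the ``easy'' direction: if $f$ is supported away from $\mcd_{s_0}(\Gamma)$, I want to show $\mcr_+(0,f)(s,y)=0$ for $s\leq s_0$, $y\in\ogamma$. Using the identity $V_+(x,s,y)=x^{-n/2}u(s-\log x,x,y)$, the vanishing of $\mcr_+(0,f)$ at $(s,y)$ for $s\leq s_0$ amounts to control of $u(t,z)$ along the incoming ``bicharacteristic tube'' reaching the boundary point $y\in\Gamma$ at the rescaled time parametrized by $s=t+\log x$. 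The Lorentzian metric $G_L=dt^2-\frac{dx^2}{x^2}-\frac{h}{x^2}$ has finite speed of propagation in the interior, so the value of $u$ at a point $w=(\alpha,y')$ near $\Gamma$ at time $t$ depends only on the data in the metric ball of radius $|t|$ about $w$; translating through the relation between arc length along the normal geodesics and the variable $s$ (namely $s=\log\alpha$ when one follows the normal geodesic from $w$, as recalled before the statement), one sees that if $f$ vanishes on $\mcd_{s_0}(\Gamma)$ then $u$ and hence $\mcr_+(0,f)$ vanishes in the claimed region. Some care is needed because $f\in L^2_{ac}(X)$ rather than $C_0^\infty$, so I would first prove the statement for $f\in C_0^\infty(\intx)\cap L^2_{ac}(X)$ and then pass to the limit using that $\mcr_+$ is an isometry onto $L^2(\mr\times\p X)$ and the density statement quoted from Corollary 6.3 of \cite{sa}.

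For the converse — the substantive direction — suppose $\mcr_+(0,f)(s,y)=0$ for all $s\leq s_0$ and all $y\in\ogamma$. I would translate this into a statement about the solution $u$ of the wave equation \eqref{wave} with data $(0,f)$: the radiation field vanishing on $(-\infty,s_0]\times\ogamma$ says, roughly, that $u$ has no incoming component along the family of backward light cones hitting $\Gamma$ before the rescaled time $s_0$. The key tool is Tataru's unique continuation theorem \cite{TAT} applied to the Lorentzian operator $D_t^2-\Delta_g+\tfrac{n^2}{2}$ (equivalently, to its conformal rescaling to a $0$-metric/incomplete picture where $\Gamma$ becomes a genuine piece of spatial boundary), which guarantees that vanishing Cauchy data on an open piece of the timelike boundary $\mr\times\Gamma$ forces $u$ to vanish in the ``domain of influence'' of that piece. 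Concretely, I would fix $z_0\notin\mcd_{s_0}(\Gamma)$ — equivalently $d_g(z_0,w)\geq\log(e^{s_0}/\alpha)$ for every $w=(\alpha,y')$ with $y'\in\Gamma$, $0<\alpha<\eps$ — and exhibit $z_0$ as unreachable by the unique continuation region, so that the only information propagated back into a neighborhood of $z_0$ from the vanishing boundary data is trivial; then run Tataru in the opposite direction, or apply the global theorem \ref{support-full} after extending by zero, to conclude $f=0$ near $z_0$.

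A cleaner way to package the converse, which I would try first, is to localize the global result: extend $f$ by $0$ to all of $X$ and compare with the full-boundary radiation field. The obstruction is that $\mcr_+(0,f)$ need not vanish on $\mr\times(\p X\setminus\Gamma)$, so Theorem \ref{support-full} does not apply directly. To get around this I would use an Oleĭnik/Holmgren-type propagation argument together with Tataru's theorem to "fill in" an open collar of $\Gamma$: the hypothesis controls $\mcr_+(0,f)$ on $\mr\times\ogamma$ for $s\leq s_0$, which by the $C^\infty$ regularity in \eqref{defv+} and the boundary equation \eqref{lapeq} controls the full Cauchy data of $u$ on a slab of $\mr\times\Gamma$; unique continuation then propagates $u\equiv 0$ into exactly the set of interior points that can be joined to $\Gamma$ by a path of $G_L$-length less than the relevant rescaled time, which is precisely $\mcd_{s_0}(\Gamma)$ by the distance computation recalled above. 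Finally, since $u(0,z)=0$ and $D_tu(0,z)=f(z)$, vanishing of $u$ on a neighborhood of $\{0\}\times(\mcd_{s_0}(\Gamma))$ gives $f=0$ a.e. there.

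The main obstacle I anticipate is the geometric bookkeeping that identifies the unique continuation region with $\mcd_{s_0}(\Gamma)$: one must carefully convert Tataru's abstract "domain of influence of an open set of the timelike boundary" into the explicit metric condition $d_g(z,w)<\log(e^{s_0}/\alpha)$, keeping track of the reparametrization $s=t+\log x$, the fact that $\Gamma$ is only an open subset (so boundary normal coordinates for $g$ are available only in a collar over $\Gamma$, forcing the $\eps$ in \eqref{mcdnew}), and the complication that $\p X\setminus\Gamma$ has nonempty interior so no global control is available. A secondary technical point is justifying all of this for $f\in L^2_{ac}(X)$ — in particular that the radiation field of such an $f$ still has enough regularity on $\mr\times\Gamma$ for Tataru's theorem to apply — which I expect to handle by the density and isometry properties of $\mcr_+$ quoted from \cite{sa} rather than by a direct regularity estimate.
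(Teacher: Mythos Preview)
Your easy direction is fine and matches the paper's Lemma. The gap is in the converse. You propose to apply Tataru's theorem to the ``timelike boundary'' $\mr\times\Gamma$, possibly after conformal rescaling. This does not work: $\Gamma$ lies at infinite $g$-distance, so in the original picture there is no boundary to take Cauchy data on, and in the rescaled picture the operator $P$ of \eqref{operatorP} is genuinely degenerate at $x=0$ --- its principal part $\p_x(2\p_s+x\p_x)-x\Delta_h$ has the elliptic piece vanishing and the surface $\{x=0\}$ is characteristic. Tataru's theorem (and the H\"ormander/Robbiano--Zuily results you implicitly invoke) require a non-degenerate second-order operator with time-independent coefficients; none of them gives unique continuation across $\{x=0\}$ for $P$. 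The paper says this explicitly: ``Since one cannot prove unique continuation results across $x=0$\ldots we have to compactify the space in a suitable way.''

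What the paper actually does for the hard direction is a three-stage bootstrap that your proposal skips entirely. First (Proposition~\ref{step-1}), it introduces the null coordinates $\mu=e^{-s_-/2}$, $\nu=e^{s_+/2}$, proves energy estimates that allow $W=V_+$ to be extended by zero across the wedge $\{\mu=0\}\cup\{\nu=0\}$, and then proves a \emph{new} Carleman estimate (Lemma~\ref{unique-cont-2}) for the degenerate operator $\wtq=\p_\mu\p_\nu-\mu\nu\Delta_h+\cdots$ to obtain vanishing of $V_+$ in a small box $\{x<\del,\ s<\log\del,\ |y-p|<\del\}$. This step is the heart of the proof and is not a consequence of any off-the-shelf unique continuation theorem. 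Second (Proposition~\ref{step-2}), a H\"ormander-type pseudoconvexity argument pushes the vanishing region from $s<s_1$ part-way toward $s<s_0$, at the cost of shrinking the $x$-neighborhood. Third (Proposition~\ref{step-3}), Tataru's theorem --- now applied in the \emph{interior}, where the operator is non-degenerate --- restores the $x$-neighborhood to a fixed size. Iterating the second and third steps gets $V_+=0$ for all $s<s_0$ in a uniform collar, and a final interior application of Tataru gives $f=0$ on $\mcd_{s_0}(\Gamma)$. Your outline correctly identifies Tataru as relevant to the interior propagation and to the final step, but it omits the degenerate-boundary Carleman estimate and the iteration, which are where all the work lies.
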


Notice that  $\mcd_{s_0}(\Gamma)$ is equal to the union of open balls centered at $(\alpha,y'),$ with  $0<\alpha<\eps,$ and $y' \in \Gamma,$ and radii 
$\log(\frac{e^{s_0}}{\alpha})$ measured with respect to the metric $g.$ Therefore $\mcd_{s_0}(\Gamma)$ is an open subset of $\intx.$

 In Theorem \ref{support-full} one assumes that $s_0<<0$ and the conclusion about the support of $f$ is expressed in terms of coordinates $(x,y),$ which are only valid in a neighborhood of $\p X.$  There is no restriction on $s_0$ in Theorem \ref{support-local};  so it is a generalization of Theorem \ref{support-full} in this sense as well.   On the other hand, if $\Gamma=\p X$ and $s_0<<0$ is such that coordinates $(x,y)$ hold in $(0,\eps) \times \p X,$ with $e^{s_0}<\eps,$  then $\mcd_{s_0}(\p X)=\{(x,y): x< e^{s_0}, \; y\in \p X\}.$ Indeed, given a point $z=(x,y)$ with $x<e^{s_0},$ if $0<\alpha<x,$ $d_g((\alpha,y), (x,y))=\log(\frac{x}{\alpha})< \log(\frac{e^{s_0}}{\alpha}).$  But, if $\eps> x>e^{s_0},$ and $z=(x,y'),$ then for any $w=(\alpha,y),$ with $\alpha<e^{s_0},$ $d_g(z,w)> \log(\frac{e^{s_0}}{\alpha}).$ So 
 $z\not \in \mcd_{s_0}(\p X).$

Lax and Phillips  \cite{lp4} also proved this result when $(X,g)$ is the hyperbolic space.  It is useful to explain what the set $\mcd_{s_0}(\Gamma)$  is when $(X,g)$ is the hyperbolic space.   It is easier to do the computations for the half-space model of hyperbolic space which is given by
\begin{gather*}
\mh^{n+1}=\{ (x,y): \;\ x>0, \;\ y\in \mr^n\}, \text{ and the metric } g=\frac{dx^2+dy^2}{x^2}.
\end{gather*}
The distance function between $z=(x,y)$ and $w=(\alpha,y')$ satisfies
\begin{gather*}
\cosh d_g(z,w)= \frac{x^2+{\alpha}^2+|y-y'|^2}{2x\alpha}.
\end{gather*}
Since $d_g(z,z')\leq \log(\frac{e^{s_0}}{\alpha}),$ we obtain
\begin{gather*}
\left( x-\ha e^{s_0}(1+\alpha^2e^{-2s_0})\right)^2+|y-y'|^2\leq \oq e^{2s_0}(1+\alpha^2 e^{-2s_0})^2-\alpha^2=\oq e^{2s_0}(1-\alpha^2 e^{-2s_0})^2,
\end{gather*}
which corresponds to a ball $D(\alpha)$ centered at $(\ha e^{s_0}(1+\alpha^2e^{-2s_0}),y')$ and radius $\ha e^{s_0}(1-\alpha^2e^{-2s_0}).$ Since $\alpha<e^{s_0},$ we  have $D(\alpha)\subset D(0),$ as shown in figure \ref{fig6h}.  This ball is tangent to the plane $x=e^{s_0}$ at the point $(e^{s_0},y').$    When $\alpha=0$ the ball $D(0)$  has center $(\ha e^{s_0}, y')$ and radius $\ha e^{s_0}$ and is also tangent to the plane $\{x=0\}.$ The boundary of $D(0)$ is called a horosphere since it is orthogonal to the geodesics emanating from the point $(0,y').$   When $\alpha=e^{s_0},$ $D(e^{s_0})=(e^{s_0},y').$

The set $\mcd_{s_0}(\Gamma)$ consists of the union of horospheres with radii $\ha e^{s_0}$ tangent to points $(0,y')$ with $y'\in \Gamma,$ see figure \ref{fig7}.  In the case of $\mh^{n-1},$ the radiation field is given in terms of the horocyclic Radon transform.   The result of Lax and Phillips says that if the integral of $f$ over all horospheres  tangent to points $(0,y),$ with $y\in \Gamma$ and radii less than or equal to $\ha e^{s_0},$ then $f=0$ in the region given by the union of these horocycles.
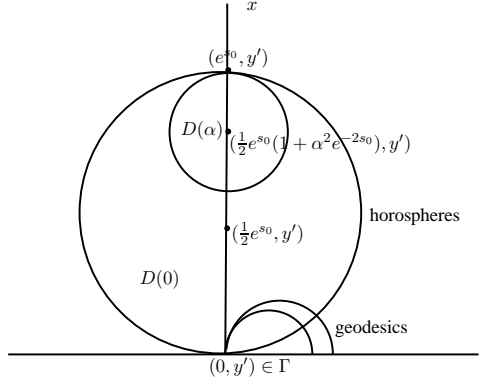
\begin{figure}
% Generated with LaTeXDraw 2.0.8
% Fri May 31 14:49:40 EDT 2013
% \usepackage[usenames,dvipsnames]{pstricks}
% \usepackage{epsfig}
% \usepackage{pst-grad} % For gradients
% \usepackage{pst-plot} % For axes
\scalebox{.65} % Change this value to rescale the drawing.
{
\begin{pspicture}(0,-3.58)(9.64,4.22)
\psline[linewidth=0.04cm](0.0,-3.1)(9.62,-3.1)
\psdots[dotsize=0.12](4.48,-0.52)
\psdots[dotsize=0.12](4.5,2.72)
\usefont{T1}{ptm}{m}{n}
\rput(4.71,2.965){$(e^{s_0}, y')$}
\psline[linewidth=0.04cm](4.48,2.72)(4.44,-3.1)
\psarc[linewidth=0.04](5.33,-3.09){0.89}{0.0}{180.0}
\psarc[linewidth=0.04](5.55,-3.09){1.09}{0.0}{175.03026}
\usefont{T1}{ptm}{m}{n}
\rput(5.31,-0.635){$(\ha e^{s_0},y')$}
\usefont{T1}{ptm}{m}{n}
\rput(6.39,1.205){$(\ha e^{s_0}(1+\alpha^2e^{-2s_0}),y')$}
\psdots[dotsize=0.12](4.5,1.46)
\pscircle[linewidth=0.04,dimen=outer](4.51,1.45){1.23}
\pscircle[linewidth=0.04,dimen=outer](4.34,-0.2){2.9}
\usefont{T1}{ptm}{m}{n}
\rput(3.97,1.485){$D(\alpha)$}
\usefont{T1}{ptm}{m}{n}
\rput(3.1,-1.575){$D(0)$}
\usefont{T1}{ptm}{m}{n}
\rput(8.33,-0.275){horospheres}
\usefont{T1}{ptm}{m}{n}
\rput(7.44,-2.575){geodesics}
\usefont{T1}{ptm}{m}{n}
\rput(4.94,-3.355){$(0,y')\in \Gamma$}
\psline[linewidth=0.04cm](4.48,2.64)(4.48,4.08)
\usefont{T1}{ptm}{m}{n}
\rput(4.99,4.025){$x$}
\end{pspicture} 
}
\caption{The horospheres tangent at $(0,y')$ and the balls $D(\alpha)$}
\label{fig6h}
\end{figure}
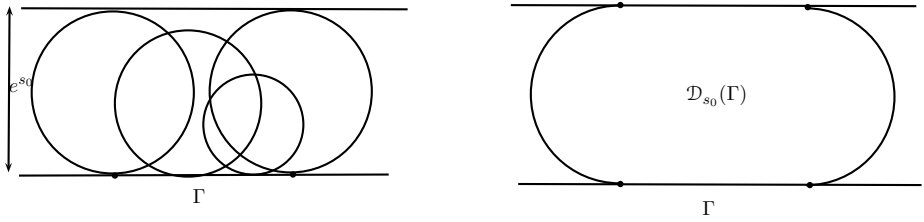
\begin{figure}
% Generated with LaTeXDraw 2.0.8
% Fri May 31 18:02:00 EDT 2013
% \usepackage[usenames,dvipsnames]{pstricks}
% \usepackage{epsfig}
% \usepackage{pst-grad} % For gradients
% \usepackage{pst-plot} % For axes
\scalebox{.70} % Change this value to rescale the drawing.
{
\begin{pspicture}(0,-2.07)(17.88,2.05)
\psline[linewidth=0.04cm](0.72,-1.27)(7.74,-1.25)
\psdots[dotsize=0.12](2.54,-1.27)
\psdots[dotsize=0.12](5.92,-1.25)
\usefont{T1}{ptm}{m}{n}
\rput(4.14,-1.665){$\Gamma$}
\pscircle[linewidth=0.04,dimen=outer](2.5,0.31){1.56}
\pscircle[linewidth=0.04,dimen=outer](5.88,0.33){1.56}
\psline[linewidth=0.04cm](0.76,1.91)(8.1,1.89)
\psline[linewidth=0.04cm,arrowsize=0.05291667cm 2.0,arrowlength=1.4,arrowinset=0.4]{<->}(0.54,1.95)(0.52,-1.21)
\usefont{T1}{ptm}{m}{n}
\rput(0.77,0.455){$e^{s_0}$}
\pscircle[linewidth=0.04,dimen=outer](3.93,0.1){1.41}
\pscircle[linewidth=0.04,dimen=outer](5.17,-0.3){0.97}
\psline[linewidth=0.04cm](10.2,-1.43)(17.86,-1.45)
\psdots[dotsize=0.12](12.14,-1.43)
\psdots[dotsize=0.12](15.74,-1.45)
\usefont{T1}{ptm}{m}{n}
\rput(13.82,-1.885){$\Gamma$}
\rput{89.76527}(12.352723,-11.857647){\psarc[linewidth=0.04](12.129524,0.2728937){1.6898013}{0.0}{181.34106}}
\rput{270.01706}(15.432061,15.893167){\psarc[linewidth=0.04](15.664981,0.22825392){1.6812352}{0.0}{179.69196}}
\usefont{T1}{ptm}{m}{n}
\rput(13.97,0.255){$\mcd_{s_0}(\Gamma)$}
\psdots[dotsize=0.12](12.14,1.97)
\psdots[dotsize=0.12](15.7,1.93)
\psline[linewidth=0.04cm](10.06,1.97)(17.76,1.95)
\end{pspicture} 
}
\caption{The set $\mcd_{s_0}(\Gamma)$ when $(X,g)$ is the hyperbolic space is given by the union of horospheres tangent to points on $\Gamma$ and radii less than or equal to $\ha e^{s_0}.$}
\label{fig7}
\end{figure}

It is interesting to explain Theorem \ref{support-local} in terms of sojourn times in the case where $(X,g)$ is non-trapping.  In this setting the sojourn time  plays the role of the distance function to the boundary of $X$ and  is closely related to the Busemann function used in differential geometry.  The sojourn times for non-trapping asymptotically hyperbolic manifolds was studied in \cite{sawu}.  Let  $g^*$ denote the principal symbol of the Laplacian with respect to $g.$ In local coordinates 
 \eqref{metric},
 \begin{gather*}
 g^*= x^2 \xi^2 + x^2 h(x,y,\eta),
 \end{gather*}
 where $h$ is the principal symbol of $\Delta_h$ as a differential operator on $\p X.$    The set of points 
 $$S^* \intx= \{ q=(z,\zeta) \in T^*\intx : g^*(q)=1\}$$
  is the unit co-sphere bundle in $T^*X$ with respect to $g^*.$ A integral curve of  the Hamilton vector field of $g^*$ is called a  bicharacteristic. These curves are parametrized  by time $t$ and are denoted by  $\exp(t H_{g^*})( q),$ and since $H_{g^*}$ is tangent to the level surfaces of $g^*,$  $t$ is the arc-length. It is well known that if  $\pi_1$ denotes the canonical projection 
 $\pi_1: T^* \intx \longrightarrow \intx,$ then $\pi_1(\gamma(t))$ is a geodesic of the metric $g$ in $X$ passing through $z,$ the projection of the point $q.$  We say that a bicharacteristic is not trapped for positive (negative) times, if its projection to $\intx$ leaves any compact set $K \subset \intx$ in finite time as $t\rightarrow \infty (-\infty).$  The manifold $(X,g)$ is non-trapping if every bicharacteristic is non-trapped for positive and negative times.    In this case, it was shown in \cite{sawu} that the following functions are well defined
 \begin{gather*}
 s(q)= \lim_{t\rightarrow \infty} (t+ \log[ x(\exp (t H_{g^*})(q))]) \text{ and } \\
 y=   \lim_{t\rightarrow \infty}  y(\exp (t H_{g^*})(q)),
 \end{gather*}
 where $x(\exp (t H_{g^*})(q))$ and $y(\exp (t H_{g^*})(q))$ denote the coordinates \eqref{metric} of the point  $\pi_1(\gamma(t)).$   The function $s(q)$ is called the sojourn time
 of the bicharacteristic through $q.$
 In a compact manifold, this would be the analogue of the set of points in the interior whose distance to the boundary is $s.$    We have the following consequence of 
 Theorem \ref{support-local}:
 \begin{cor}\label{support-local1} Let $f$ and $\Gamma\subset \p X$  satisfy the hypotheses of Theorem \ref{support-local} and suppose that $(X,g)$ is non-trapping. Then $f=0$ a.e on the set of points $z\in \intx$ such that  exists a geodesic  $\gamma(t)$ parametrized by the arc-length such that $\gamma(t) \rightarrow y\in \Gamma$ as $t\rightarrow \infty,$ and
 \begin{gather*}
 \lim_{t\rightarrow \infty}( t+\log (x(\gamma(t)))=s<s_0.
 \end{gather*}
 \end{cor}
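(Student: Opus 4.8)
The plan is to show that the set of points described in the statement is contained in the set $\mcd_{s_0}(\Gamma)$ of \eqref{mcdnew}, and then to apply Theorem \ref{support-local}. Since $f$ and $\Gamma$ are assumed to satisfy the hypotheses of that theorem, $\mcr_{+}(0,f)(s,y)=0$ for every $s\le s_0$ and $y\in\overline{\Gamma}$, and hence there is $\eps>0$ such that $f=0$ a.e.\ in $\mcd_{s_0}(\Gamma)$. It therefore suffices to prove the following: if $z\in\intx$ and there exists a unit-speed geodesic $\gamma$ with $\gamma(0)=z$, $\gamma(t)\to y\in\Gamma$ as $t\to\infty$, and $\lim_{t\to\infty}\bigl(t+\log x(\gamma(t))\bigr)=s<s_0$, then $z\in\mcd_{s_0}(\Gamma)$.

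To see this, fix such a $z$ and $\gamma$. Because $\gamma(t)\to y\in\Gamma\subset\p X$ inside $X$, for $t$ large the point $\gamma(t)$ lies in the neighborhood of $\Gamma$ on which the coordinates \eqref{metric} are valid, and there we may write $\gamma(t)=(\alpha_t,y_t)$ with $\alpha_t=x(\gamma(t))>0$, $\alpha_t\to 0$, and $y_t=y(\gamma(t))\to y$. Since $\Gamma$ is open and $\alpha_t\to 0$, for all sufficiently large $t$ we have $0<\alpha_t<\eps$ and $y_t\in\Gamma$. Moreover $t+\log\alpha_t\to s<s_0$, so for $t$ large we also get $t+\log\alpha_t<s_0$, that is, $t<\log\bigl(e^{s_0}/\alpha_t\bigr)$. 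Finally, $\gamma$ restricted to $[0,t]$ is a curve of length $t$ joining $z$ to $w_t:=(\alpha_t,y_t)$, whence $d_g(z,w_t)\le t<\log\bigl(e^{s_0}/\alpha_t\bigr)$. Thus, for any large enough $t$, the point $w_t$ witnesses the membership $z\in\mcd_{s_0}(\Gamma)$ according to \eqref{mcdnew}.

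Combining the two steps, the set in the statement is a subset of $\mcd_{s_0}(\Gamma)$, on which $f$ vanishes almost everywhere, so $f$ vanishes almost everywhere on it as well. The only mildly delicate point is the bookkeeping: one must check that for a single value of $t$ all the requirements in \eqref{mcdnew} hold at once --- that $\gamma(t)$ is in the coordinate patch, that $0<\alpha_t<\eps$, that $y_t\in\Gamma$, and that $d_g(z,w_t)<\log(e^{s_0}/\alpha_t)$ --- but each of these holds for all sufficiently large $t$, hence they hold simultaneously once $t$ is large. There is no genuine obstacle here: the corollary is a geometric restatement of Theorem \ref{support-local}, with the non-trapping hypothesis entering only to guarantee that the sojourn-time description of the geodesics emanating from $z$ is meaningful (cf. \cite{sawu}), and all the analytic substance resides in the theorem itself.
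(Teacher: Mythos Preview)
Your proof is correct and follows essentially the same approach as the paper: you show that any point $z$ described in the statement lies in $\mcd_{s_0}(\Gamma)$ by using the arc-length bound $d_g(z,\gamma(t))\le t$ together with $t+\log x(\gamma(t))<s_0$ for large $t$, and then invoke Theorem \ref{support-local}. Your version is in fact slightly more careful than the paper's in tracking that the $y$-coordinate $y_t$ of $\gamma(t)$ (rather than its limit $y$) lies in the open set $\Gamma$ for $t$ large.
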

 \begin{proof} Suppose there exists a geodesic $\gamma(t),$ parametrized by the arc-length $t$ such that $\gamma(0)=z$ and $\lim_{t\rightarrow \infty} \gamma(t)=y,$ moreover $\lim_{t\rightarrow \infty}(t+\log(x(\gamma(t)))=s<s_0.$  Since $t$ is the arc-length parameter,
$d(z, (x(\gamma(t)),y)) \leq t$ and $s<s_0,$ then there exists $T>0$ such that for $t>T,$  $\gamma(t)\in U\sim [0,\eps) \times \p X,$ where coordinates \eqref{metric} are valid and $t+\log x(\gamma(t))<s_0.$   Therefore, if $t>T,$
\begin{gather*}
d(z, (x(t),y)) \leq t < s_0-\log x(\gamma(t))= \log(\frac{e^{s_0}}{x(\gamma(t))}).
\end{gather*}
Hence $z\in \mcd_{s_0}(\Gamma).$  
\end{proof}
The proof of Theorem \ref{support-local} will be divided in several steps.  We begin by proving the sufficiency of the condition in Theorem \ref{support-local}.
\begin{lemma} Let  $f \in L^2_{ac}(X)$ be such that $f(z)=0$ in the set $\mcd_{s_0}(\Gamma).$    Then  $\mcr_+(0,f)(s,y)=0$ if $s\leq s_0$ and $y \in \Gamma.$
\end{lemma}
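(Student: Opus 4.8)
The plan is to reduce the assertion, by approximation, to the case of data $g\in C_0^\infty(\intx)\cap L^2_{ac}(X)$ that vanishes on a slightly enlarged region, where the conclusion follows at once from finite speed of propagation together with the description \eqref{defv+}--\eqref{forad} of the forward radiation field. First I would isolate the geometric input. Fix $s_1<s_0$ and, shrinking the $\eps$ of the hypothesis if necessary, assume $\eps\le e^{s_1}$ and that coordinates \eqref{metric} hold on $(0,\eps)\times\Gamma$. For $y\in\Gamma$, $0<x<\eps$ and $s<s_1$, taking $w=(x,y)$ in \eqref{mcdnew} shows that the open $g$--ball of radius $\log(e^{s_1}/x)=s_1-\log x$ about $(x,y)$ lies in $\mcd_{s_1}(\Gamma)$; since $(X,g)$ is complete and $s-\log x<s_1-\log x$, the closed ball $\overline{B_g((x,y),\,s-\log x)}$ is a compact subset of $\mcd_{s_1}(\Gamma)$. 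I would also record the elementary facts that $\mcd_s(\Gamma)\subseteq\mcd_{s'}(\Gamma)$ for $s\le s'$, that the $\delta$--neighbourhood of $\mcd_s(\Gamma)$ is contained in $\mcd_{s+\delta}(\Gamma)$ (because $\log(e^s/\alpha)+\delta=\log(e^{s+\delta}/\alpha)$), and that $\mcd_{s_0}(\Gamma)\ne\intx$ --- a sufficiently thin collar over the interior of $\partial X\setminus\Gamma$ lies outside every admissible ball.

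Next I would settle the smooth case. Let $g\in C_0^\infty(\intx)$ vanish on $\mcd_{s_1}(\Gamma)$ and let $u$ solve \eqref{wave} with data $(0,g)$. By finite speed of propagation for $D_t^2-\Delta_g+\frac{n^2}{2}$ (the zeroth--order term is irrelevant), $u(t,z)$ depends only on $g$ restricted to $\overline{B_g(z,|t|)}$. Taking $z=(x,y)$ and $t=s-\log x$ with $y\in\Gamma$, $s<s_1$ and $x>0$ small, the inclusion above gives $\overline{B_g((x,y),\,s-\log x)}\subseteq\mcd_{s_1}(\Gamma)$, which is disjoint from $\supp g$; hence $u(s-\log x,x,y)=0$ for all sufficiently small $x$. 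If moreover $g\in L^2_{ac}(X)$, then $(0,g)\in C_0^\infty(X)\cap E_{ac}(X)$ and \eqref{defv+}--\eqref{forad} apply: $V_+(x,s,y)=x^{-n/2}u(s-\log x,x,y)$ is smooth up to $x=0$ and vanishes identically for small $x$ when $y\in\Gamma$, $s<s_1$, so $\mcr_+(0,g)(s,y)=D_sV_+(0,s,y)=0$ for those $(s,y)$.

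Finally I would carry out the approximation. Given $f\in L^2_{ac}(X)$ with $f=0$ a.e.\ on $\mcd_{s_0}(\Gamma)$ (so $\supp f\cap\mcd_{s_0}(\Gamma)=\emptyset$), choose $s_j\uparrow s_0$ and $\rho_j\downarrow 0$ with $s_j+\rho_j\le s_0$; cutting $f$ off near $\partial X$ and mollifying at scale $\rho_j$ in local charts produces $f_j\in C_0^\infty(\intx)$ with $f_j\to f$ in $L^2(X)$ and, by the neighbourhood fact above, $f_j$ vanishing on $\mcd_{s_j}(\Gamma)$. To place the approximants in $L^2_{ac}(X)$, fix once and for all a nonempty precompact open set $O'\Subset\intx$ with $\overline{O'}\cap\mcd_{s_0}(\Gamma)=\emptyset$ and functions $\chi_1,\dots,\chi_N\in C_0^\infty(O')$ for which $\big(\langle\chi_k,\phi_l\rangle\big)_{k,l=1}^N$ is invertible --- possible since, by unique continuation for $\Delta_g$, the eigenfunctions $\phi_1,\dots,\phi_N$ are linearly independent on every nonempty open set. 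I then set $\widetilde f_j=f_j+\sum_{k=1}^N c_k^{(j)}\chi_k$ with $(c_k^{(j)})_k$ chosen so that $\langle\widetilde f_j,\phi_l\rangle=0$ for all $l$; since $\langle f_j,\phi_l\rangle=\langle f_j-f,\phi_l\rangle\to 0$, the correction tends to $0$ in $L^2(X)$, so $\widetilde f_j\to f$ in $L^2(X)$, $\widetilde f_j\in C_0^\infty(\intx)\cap L^2_{ac}(X)$, and $\widetilde f_j$ still vanishes on $\mcd_{s_j}(\Gamma)$ (the correction being supported in $O'$). Applying the smooth case to $g=\widetilde f_j$ with $s_1=s_j$ gives $\mcr_+(0,\widetilde f_j)=0$ on $(-\infty,s_j)\times\Gamma$; since $\mcr_+$ is unitary on $E_{ac}(X)$ and $\widetilde f_j\to f$ there, $\mcr_+(0,\widetilde f_j)\to\mcr_+(0,f)$ in $L^2(\mr\times\partial X)$, and letting $j\to\infty$ yields $\mcr_+(0,f)=0$ on $(-\infty,s_0)\times\Gamma$, hence on $(-\infty,s_0]\times\Gamma$.

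I expect the approximation step to be the main obstacle: finite speed of propagation is a statement about the genuine wave solution and requires compactly supported data, whereas formula \eqref{forad} for $\mcr_+$ requires the data to lie in $E_{ac}(X)$, and the obvious fix of projecting a compactly supported approximant onto $L^2_{ac}(X)$ reintroduces mass on $\mcd_{s_0}(\Gamma)$ because the eigenfunctions vanish on no open set. Adding a small correction supported in a fixed region disjoint from $\mcd_{s_0}(\Gamma)$ is what reconciles the two constraints; the remaining geometric verifications ($\mcd_{s_0}(\Gamma)\ne\intx$, behaviour of $\delta$--neighbourhoods) are routine.
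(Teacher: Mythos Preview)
Your argument is correct and rests on the same idea as the paper's proof, namely finite speed of propagation for the wave group applied at points $w=(\alpha,y)$ with $y\in\Gamma$; the paper simply invokes this directly for $f\in L^2_{ac}(X)$ without spelling out any approximation, obtaining $V_+(\alpha,s,y)=0$ for $\log\alpha\le s\le s_0$ and then passing to $\alpha\downarrow 0$. Your careful reduction to $C_0^\infty\cap L^2_{ac}$ data (in particular the device of adding a small correction supported in a fixed set $O'$ disjoint from $\mcd_{s_0}(\Gamma)$ to kill the eigenfunction components) makes explicit the density step that the paper leaves implicit; note only that if no such $O'$ exists then $\mcd_{s_0}(\Gamma)$ has full measure and $f=0$, so the conclusion is trivial.
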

\begin{proof}  Let $u(t,w)$ satisfy the wave equation 
\eqref{wave} with initial data $(0,f).$  The finite speed of propagation for solutions of the wave equation guarantees that $u(t,w)=0$ if  $0\leq t \leq d_g(w,\supp f).$ In particular, since $f(z)=0$ for $z\in \mcd_{s_0}(\Gamma),$  if $w=(\alpha,y)$ with $y\in \Gamma,$ then
$u(t,w)=0$ if $0\leq t \leq \log \left(\frac{e^{s_0}}{\alpha}\right).$   Since $s=t+\log x,$ when $x=\alpha$ we have that
$V_+(\alpha,s,y)=x^{-\novt} u(s-\log \alpha, \alpha,y)=0$ provided $ 0\leq s-\log \alpha \leq \log \left(\frac{e^{s_0}}{\alpha}\right).$  Therefore one has
$V_+( \alpha,s,y)=0$ provided $\log \alpha\leq s\leq s_0$ and  $y \in \Gamma.$  This implies that $\mcr_+(0,f)(s,y)=0$ if $s\leq s_0$ and $y \in \Gamma.$
\end{proof}

The proof of the converse relies on delicate unique continuation results.  First, it is important to realize that  we may assume that $f \in C^\infty(\intx).$  Indeed, since $\mcr(0,f)(s,y)=0$ for $s\leq s_0$ and $y\in \Gamma,$ we may take the convolution of  $\mcr_+(0,f)$ with $\psi_\del\in C_0^\infty(\mr),$ even and supported in $(-\del,\del),$ with $\int \psi_\del(s) \; ds=1,$ and hence $H_\del(s,y)=\psi_\del*\mcr_+(0,f)=\mcr(0,\widetilde{f_\del})=0$ if $s\leq s_0+\del,$ and since for every $k\geq 0,$
\begin{gather*}
\p_s^{2k} H_\del(s,y)=\mcr_+(0, (\Delta-\frac{n^2}{4})^k \widetilde{f_\del}) \in L^2(\mr\times \p X),
\end{gather*}
and using that $\mcr_+$ is unitary, then $(\Delta-\frac{n^2}{4})^k \widetilde{f_\del} \in L^2(X).$ Therefore, by elliptic regularity $\widetilde{f_\del}\in C^\infty(\intx).$
If one proves that $\widetilde{f_\del}(z)=0$ for $z\in \mcd_{s_0}(\Gamma),$ is supported in $\{x\geq e^{s_0+\del}\},$  since $\widetilde{f_\del}\rightarrow f$ as $\del\rightarrow 0,$ it follows that 
$f(z)=0$ for $z\in \mcd_{s_0}(\Gamma).$  

The next step in the proof is
\begin{prop}\label{step-1}  Let $f\in C^\infty(X)$ satisfy the hypotheses of Theorem \ref{support-local}.    Let  $u$ satisfy the initial value problem for the wave equation \eqref{wave} with initial data $(0,f),$ and let $V_+(x,s,y)$ be defined as in \eqref{defv+}.  Then, in the sense of distributions,  $\p_x^k V_+(0,s,y)=0,$ $k=0,1,...,$ provided $s\leq s_0,$ and
$y\in \Gamma.$ Moreover, for every $p\in \Gamma$ there exists $\del>0$ such that $V_+(x,s,y)=0$ if $0<x<\del,$ $|y-p|<\del$ and $s<\log \del.$
\end{prop}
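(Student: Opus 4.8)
The plan is to work in the coordinates $(x,s,y)$ with $s=t+\log x$, with the conjugated unknown $V_+(x,s,y)=x^{-\novt}u(s-\log x,x,y)$, and to read the vanishing of the normal derivatives of $V_+$ off of the wave equation written in these coordinates together with the behaviour of $V_+$ as $s\to-\infty$. Since $\p_t=\p_s$ while the $x$-derivative at fixed $t$ equals $\p_x+x^{-1}\p_s$, conjugating the wave operator in \eqref{wave} by $x^{-\novt}$ and passing to $(x,s,y)$ turns the equation into $PV_+=0$ for $x>0$, where
\begin{gather*}
P=(2\p_x+\beta)\,\p_s+x\big(\p_x^2+\Delta_{h(x)}\big)+(1+x\beta)\,\p_x+\novt\,\beta ,
\end{gather*}
$\Delta_{h(x)}$ is the Laplacian of $h(x,\cdot,\cdot)$ on $\p X$, and $\beta=\ha\p_x(\log\det h)$. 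The coefficients of $P$ are smooth up to $x=0$, and $\{x=0\}$ is a characteristic surface for $P$: its conormal $dx$ lies in the characteristic set at $x=0$. As $V_+\in C^\infty([0,\eps)_x\times\mr_s\times\p X)$, the identities $\p_x^k(PV_+)=0$ persist up to $x=0$ for all $k\geq 0$.

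I would then obtain $\p_x^kV_+(0,s,y)=0$ (for $s\leq s_0$, $y\in\Gamma$) by induction on $k$. For $k=0$: the hypothesis says $D_sV_+(0,s,y)=\mcr_+(0,f)(s,y)=0$ when $s\leq s_0$ and $y\in\ogamma$, so $V_+(0,\cdot,y)$ is constant on $(-\infty,s_0]$; and since, for the smooth $f$ furnished by the mollification, $V_+(x,s,y)\to 0$ as $s\to-\infty$ near $\p X$, that constant vanishes. For the inductive step, evaluate $\p_x^k(PV_+)=0$ at $x=0$: because $\{x=0\}$ is characteristic, the only term involving $\p_x^{k+1}V_+\restr_{x=0}$ is $(2\p_s+k+1)\,\p_x^{k+1}V_+\restr_{x=0}$, all other terms being differential operators in $(s,y)$ applied to $\p_x^jV_+\restr_{x=0}$ with $j\leq k$, which vanish on $\{s\leq s_0,\ y\in\Gamma\}$ by the inductive hypothesis. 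Hence there $\p_x^{k+1}V_+(0,\cdot,y)$ satisfies the homogeneous transport equation $(2\p_s+k+1)w=0$, whose nontrivial solutions blow up like $e^{-(k+1)s/2}$ as $s\to-\infty$; the boundedness of $V_+$ and its derivatives as $s\to-\infty$ then forces $\p_x^{k+1}V_+(0,s,y)=0$ for $s\leq s_0$, $y\in\Gamma$.

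For the last assertion, fix $p\in\Gamma$. The step above shows $V_+$ vanishes to infinite order along $\{x=0\}$ over $\{s\leq s_0,\ y\in\Gamma\}$, so extending $V_+$ by $0$ to $x<0$ (extending $h$ and $\beta$ smoothly across $x=0$) produces $\widetilde V\in C^\infty$ near $\{0\}\times(-\infty,s_0]\times\{|y-p|\text{ small}\}$ with $P\widetilde V=0$, $\widetilde V\equiv 0$ for $x\leq 0$, and $\widetilde V\to 0$ as $s\to-\infty$. Treating $P\widetilde V=0$ as an evolution in $s$ — obtained by inverting the first-order $x$-operator $2\p_x+\beta$ and using the known datum $\p_s\widetilde V(0,s,y)=0$ for $s\leq s_0$, $y\in\Gamma$ — and running the corresponding uniqueness (energy) argument from $s=-\infty$, one gets $\widetilde V\equiv 0$ in a set of the form $\{0<x<\del,\ |y-p|<\del,\ s<\log\del\}$, which is the claim. (If $f$ vanishes near the part of $\p X$ close to $p$, this is immediate from finite speed of propagation, as in the proof of sufficiency.)

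The main obstacle is this last step. Because $\{x=0\}$ is characteristic for $P$, infinite-order vanishing there does not by itself propagate into $x>0$ (already the model operator $\p_x\p_s$ shows this), so one must genuinely use that $V_+$ is the radiation transform of a finite-energy solution — concretely, its decay as $s\to-\infty$ — and carry this through the degeneration of $P$ at $x=0$. A subsidiary point requiring care is to justify the precise decay, as $s\to-\infty$, of $V_+$ together with all of its $x$- and $y$-derivatives for the merely smooth (not compactly supported) $f$ coming from the mollification; this is where the hypothesis $f\in L^2_{ac}(X)$ and the mapping properties of $\mcr_+$ recalled in Section~\ref{intro} enter.
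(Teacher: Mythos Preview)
Your inductive scheme for $\partial_x^k V_+(0,s,y)=0$ has a real gap at precisely the point you call ``subsidiary''. The transport ODE $(2\partial_s+k+1)w=0$ on $\{x=0\}$ has solution $C(y)e^{-(k+1)s/2}$, so to kill $C$ you need $\partial_x^{k+1}V_+(0,s,y)=o(e^{-(k+1)s/2})$ as $s\to-\infty$. For $f\in C^\infty(\intx)\cap L^2_{ac}(X)$ this is \emph{not} a priori available: the mapping properties of $\mcr_+$ give only $L^2$ control of $D_sV_+(0,\cdot,\cdot)$ and (after iterating $\Delta_g-\frac{n^2}{4}$) its $s$-derivatives, which says nothing about growth of $x$-derivatives at $s=-\infty$. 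Even the base case is not free: the only source of the condition $V_+(0,s,y)\to 0$ is the Cauchy datum $V_+(x,\log x,y)=0$, which in the $(x,s)$ plane sits on a curve receding to $s=-\infty$ as $x\to 0$. The paper resolves this by the compactifying change $\mu=e^{-(t-\log x)/2}$, $\nu=e^{(t+\log x)/2}$, under which $x=\mu\nu$, $s=2\log\nu$, $\{s=-\infty\}$ becomes $\{\nu=0\}$, and the Cauchy surface $\{t=0\}$ becomes the \emph{finite} diagonal $\{\mu=\nu\}$. An energy estimate on the triangle bounded by $\{\mu=\mu_0\}$, $\{\nu=T\}$, $\{\mu=\nu\}$ then controls $W(\mu,\nu,y)=V_+(\mu\nu,2\log\nu,y)$ uniformly down to the corner, and an iteration that alternates the equation with the diagonal condition $W(\mu,\mu,y)=0$ yields the infinite-order vanishing at $\{\mu=0\}\cup\{\nu=0\}$ simultaneously. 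Your transport equations in $s$ are the $(x,s)$-shadow of this, but the boundary condition you need at $s=-\infty$ is exactly what the $(\mu,\nu)$-picture supplies and the $(x,s)$-picture hides.

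For the second assertion your outline is not a proof. Inverting $2\partial_x+\beta$ does not produce a well-posed $s$-evolution: the residual operator $x(\partial_x^2+\Delta_h)+\cdots$ loses derivatives that integration in $s$ cannot recover, and the characteristic nature of $\{x=0\}$ makes this a genuine unique continuation problem (your own remark about the model $\partial_x\partial_s$ applies here). The paper handles it with a Carleman estimate for $\partial_\mu\partial_\nu-\mu\nu\Delta_h+\cdots$ with weight $\rho^{-k}$, after a further convexifying change $r=\frac12(\mu+\nu)$, $\tau=\frac12(\mu-\nu)$, $\rho=r+|y-p|^2$; this step uses both the zero extension of $W$ across $\{\mu\le0\}\cup\{\nu\le0\}$ just established and the specific $\mu\nu$-degeneration of the tangential part, and it is where the main analytic work of the proposition lies.
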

\begin{proof} In local coordinates \eqref{metric} and for $s=t+\log x,$ $x\in [0,\eps),$ the wave operator, conjugated by appropriate powers of 
$x,$ can be written as
\begin{gather}
P=-x^{-\frac{n}{2}-1} \left(D_t^2-\Delta-\frac{n^2}{4}\right) x^{\novt}= \p_x(2\p_s+x \p_x) - x\Delta_h + A\p_s + A x\p_x +\novt A,\label{operatorP}
\end{gather}
where $\Delta_h$ is the Laplace operator on $\p X$ corresponding to the metric $h(x).$ In local coordinates
\begin{gather}
\begin{gathered}
\Delta_h=-\frac{1}{\sqrt{\theta}} \p_{y_i}( \sqrt{\theta} \; h^{ij} \p_{y_j}) \text{ where } \\
 h=(h_{ij}(x,y)) \;\ h^{-1}=(h^{ij}(x,y)), \;\  \theta=\det(h_{ij}) \text{ and } A=\frac{1}{\sqrt{\theta}}\p_{x} \sqrt{\theta}.
\end{gathered}\label{deftheta}
\end{gather}
The Cauchy problem \eqref{wave}, with initial data $(0,f)$ translates into the following initial value problem for $V(x,s,y)= x^{-\novt} u(s+\log x,x,y),$
\begin{equation}\label{modCP}
\begin{gathered}
PV(x,s,y)=0 \text{ in } \mr \times (0,\eps)_x \times \p X,\\
V(x,\log x, y)=0, \;\ D_s V(x,\log x,  y)=x^{-\novt} f(x,y).
\end{gathered}
\end{equation}
Since one cannot prove unique continuation results across $x=0,$ then as in \cite{sa} we have to compactify the space in a suitable way, and instead of working with coordinates $x$ and $s,$ it is more convenient to work with the variables
\begin{gather*}
s_+=s=t+\log x \text{ and } s_-=t-\log x.
\end{gather*}
Since we are interested in the behavior of $V_+(x,s,y)$ defined in \eqref{defv+}  for $s=s_+\sim -\infty,$ and by parity for $s_-\sim \infty,$ we introduce the following change of variables
\begin{gather}
 \mu=e^{-\frac{s_-}{2}} \text{ and } \nu=e^{\frac{s_+}{2}}. \label{defmunu}
 \end{gather}
This implies that
\begin{gather*}
s=2\log \nu, \;\ x=\mu \nu. 
 \end{gather*}
 We remark that the  change of variables $(t,x,y) \mapsto \left(\frac{\mu+\nu}{2}, \frac{\mu-\nu}{2},y\right),$ which will be used below, plays the role of the Kelvin transform for the Euclidean wave equation.   
 
In coordinates $(\mu,\nu,y),$ the operator $P$  has the form
 \begin{gather*}
 \wtp=\p_\mu\p_\nu - \mu\nu \Delta_h + \ha A(\mu\p_\mu+\nu\p_\nu) +\novt A,
 \end{gather*}
where $h=h(\mu\nu),$ $A=A(\mu\nu,y).$  If $W(\mu,\nu,y)= V_+(\mu\nu, 2\log \nu,y),$ the Cauchy problem \eqref{modCP} becomes
\begin{equation}\label{modCP1}
\begin{gathered}
\wtp W=0, \;\ \ \mu, \nu \in (0,\eps), \;\ y \in \p X \\
W(\mu,\mu,y)=0, \;\ \p_\mu W(\mu,\mu,y)=- \mu^{-1-n} f(\mu^2,y).
\end{gathered}
\end{equation}

Recall that we are assuming that $f\in C^\infty(\intx),$ so $W$ is $C^\infty$ in the region $\{\mu>0, \nu>0\}.$ The issue here is the behavior of $W$ at $\{\mu=0\}\cup \{\nu=0\}.$
One should notice that if $F(\mu,y)=\mu^{-1-n} f(\mu^2,y),$ then
\begin{gather}
\int_0^\eps \int_{\p X} \mu |F(\mu,y)|^2 \; \theta^\ha(\mu^2,y) dy d\mu=\ha \int_0^{\eps^2} \int_{\p X}  |f(x,y)|^2 \; x^{-n-1} \theta^\ha(x,y) dx dy=\ha ||f||_{L^2(X)}^2.\label{l2nf}
\end{gather}

We know from Theorem 2.1 of \cite{sa} that if $f \in C_0^\infty(\intx)\cap  L^2_{ac}(X),$ then $W$ has a $C^\infty$  extension $W(\mu,\nu,y) \in C^\infty([0,\eps]\times [0, \eps] \times \p X)$ up to 
$\{\mu =0\} \cup \{\nu=0\},$ and since  $\p_s=\ha(\nu\p_\nu-\mu\p_\mu),$ then, provided $f \in C_0^\infty(\intx)\cap  L^2_{ac}(X),$
\begin{gather}
\mcr_+(0,f)(2\log \nu,y)= \ha\left[(\nu\p_\nu- \mu\p_\mu) W(\mu,\nu,y)\right]|_{\mu=0}= \ha  \nu \p_\nu W(0,\nu,y), \label{newradf}
\end{gather}
and we want to show that this restriction makes sense for $f\in L_{ac}^2(X).$  We will work in the region $\{\nu \geq \mu\},$ but since the solution to \eqref{modCP1} is  odd under the change
$(\mu,\nu)\mapsto (\nu,\mu),$ a similar analysis works for the backward radiation field in the region $\{\nu \leq \mu\}.$

Again, assuming that $f\in C_0^\infty(\intx)\cap L^2_{ac}(X),$ and $W$ satisfies \eqref{modCP1}, one can combine equations (4.11), (4.14) and (4.15) of \cite{sa}, and \eqref{l2nf}  to arrive at the following estimate (see figure \ref{figure1}):  For $\mu_0\in [0,\eps), $
$T\in [0,\eps),$ there exists $C>0$ depending on the operator $\wtp$ and $\eps$ only, such that 
\begin{equation}\label{enerest}
\begin{gathered}
I(W,\mu_0,T)=\int_{\mu_0}^T \int_{\p X} \left.\left[ (|W|^2+\mu|\p_\mu W|^2 + \mu\nu^2 |d_{h(\mu \nu)} W|^2)\sqrt{\theta}(\mu \nu)\right] \right|_{\nu=T}\, dy d\mu+ \\
\int_{\mu_0}^T \int_{\p X} \left.\left[ (|W|^2+\nu|\p_\nu W|^2 + \mu^2\nu |d_{h(\mu \nu)} W|^2) \; \sqrt{\theta}(\mu \nu)\right]\right|_{\mu=\mu_0} \,  dy d\nu \leq C ||f||_{L^2(X)},
\end{gathered}
\end{equation}
\begin{figure}
% Generated with LaTeXDraw 2.0.8
% Thu Mar 07 10:53:44 EST 2013
% \usepackage[usenames,dvipsnames]{pstricks}
% \usepackage{epsfig}
% \usepackage{pst-grad} % For gradients
% \usepackage{pst-plot} % For axes
\scalebox{.7} % Change this value to rescale the drawing.
{
\begin{pspicture}(0,-3.3488476)(8.6610155,3.3288476)
\psline[linewidth=0.04cm](0.96101564,3.3088477)(0.9410156,-2.6911523)
\psline[linewidth=0.04cm](0.9410156,-2.7111523)(8.641016,-2.7111523)
\usefont{T1}{ptm}{m}{n}
\rput(7.662471,-3.1061523){$\mu$}
\usefont{T1}{ptm}{m}{n}
\rput(0.3724707,2.8738477){$\nu$}
\psline[linewidth=0.04cm](0.9410156,-2.7311523)(6.4210157,2.5088477)
\psline[linewidth=0.04cm](2.2010157,1.2688477)(2.2010157,-1.4911523)
\psline[linewidth=0.04cm](2.2010157,1.2688477)(5.1810155,1.2888477)
\psline[linewidth=0.04cm,linestyle=dashed,dash=0.16cm 0.16cm](0.9810156,1.2888477)(2.2210157,1.2888477)
\psline[linewidth=0.04cm,linestyle=dashed,dash=0.16cm 0.16cm](2.2210157,-1.4911523)(2.1810157,-2.7711523)
\usefont{T1}{ptm}{m}{n}
\rput(2.2224708,-3.1261523){$\mu_0$}
\usefont{T1}{ptm}{m}{n}
\rput(0.4524707,1.2938477){$T$}
\end{pspicture} 
}
\caption{The region of integration in \eqref{enerest}}
\label{figure1}
\end{figure}
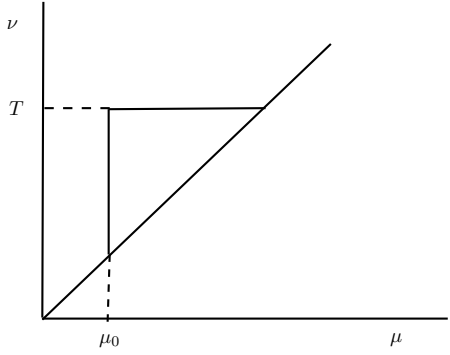

If  $f \in L_{ac}^2(X)$ and if we take a sequence $f_j\in C_0^\infty(\intx)\cap L^2_{ac}(X),$ with $||f-f_j||_{L^2(X)}\rightarrow 0,$  \eqref{enerest} shows that fixed $\mu_0\in [0,\eps_0),$ then
\begin{gather*}
I(W_j-W_k,\mu_0,T) \leq C ||f_j-f_k||_{L^2(X)},
\end{gather*}
and in particular, if  $\mu_0\in [0,\eps),$ and if $W$ is a solution of \eqref{modCP1} with $f\in L^2_{ac}(X),$ then for $\mu_0\in [0,\eps),$
\begin{gather}
\int_{\mu_0}^T \int_{\p X} \nu |\p_\nu W (\mu_0,\nu,y)|^2 \sqrt{\theta}(\mu_0 \nu ,y) d\nu dy \leq C ||f||_{L^2(X)} \label{limitest}
\end{gather}
is well defined. Since the radiation field is unitary, then in the sense of  \eqref{limitest} for $\mu_0=0,$  equation \eqref{newradf}  holds for $f\in L_{ac}^2(X).$

Next we want to show that, if
$\nu\p_\nu W(0,\nu,y)=0,$ and by symmetry $\mu\p_\mu W(\mu,0,y)=0,$ then in the sense of distributions, $W$ has an extension across $\{\mu=0\}\cup\{\nu=0\}$ such that
$W=0$ for $\mu<0,$ $\nu<0,$ with $|\mu|<\eps,$ $|\nu|<\eps,$ which remains a solution to \eqref{modCP1}.   This is possible due to the partial hypoellipticity of the operator $P.$

As it was done in \cite{sa}, it is convenient to get rid of the term $A(\mu\p_\mu+\nu\p_\nu)$ in \eqref{operatorP}, and to achieve this we conjugate the operator by
$\theta^{-\oq}.$  We get that
\begin{gather}
\wtq= \theta^{\oq} \wtp \theta^{-\oq}= \p_\mu\p_\nu - \mu\nu \Delta_h+ \mu\nu B(\mu\nu,y,\p_y)+ C(\mu\nu,y), \label{operatortP}
\end{gather}
where $C$ is $C^\infty$ and $B(\mu\nu,y,\p_y)=\sum_{j=1}^N b_j(\mu\nu,y) \p_{y_j},$  with $b_j$ $C^\infty.$ 
Let  $\wtw=\theta^{\oq} W,$  then $\wtq \wtw=0.$  For $\phi(y) \in C_0^\infty(U),$ with $U\subset \subset \Gamma,$ let 
\begin{gather}
G(\mu,\nu)= \int_{\p X} \wtw(\mu,\nu,y) \phi(y) \; dy \label{defG}
\end{gather}

Let $Z(\mu,\nu,y,D_y)= \mu\nu \Delta_h- \mu\nu B(\mu\nu,y,\p_y)- C(\mu\nu,y),$ and let $Z^*(\mu\nu,y,D_y)$ denote its transpose with respect to the
$L^2(\p X)$ product defined by  \eqref{defG}, then
\begin{gather}
\p_\mu\p_\nu G(\mu,\nu)= \int_{\p X} \wtw(\mu,\nu,y) Z^*(\mu\nu,D_y)\phi(y) \; dy \label{QestG}
\end{gather}

It follows from \eqref{enerest} that  there exists $C>0$ such that
\begin{gather}
\begin{gathered}
\int_0^T  |\p_\mu \p_\nu G(\mu,T)|^2 \; d\mu \leq C( \sum_{|\alpha|\leq 2} \sup |\p_y^\alpha \phi|)^2  ||f||_{L^2(X)}^2, \text{ for  } T\in (0,\eps), \\
\int_{\mu_0}^T  |\p_\mu \p_\nu G(\mu_0,\nu)|^2 \; d\nu  \leq  C( \sum_{|\alpha|\leq 2} \sup |\p_y^\alpha \phi|)^2 ||f||_{L^2(X)}^2, \text{ for  } \mu_0\in (0,\eps).
\end{gathered} \label{regG}
\end{gather}
Let us denote $K= \left(\sum_{|\alpha|\leq 2} \sup |\p_y^\alpha \phi|\right)  ||f||_{L^2(X)}.$  Therefore, if $\del<\mu<\eps,$
\begin{gather*}
\left|\p_\nu G(\mu,\nu)-\p_\nu G(\del,\nu)\right|=\left| \int_\del^\mu \p_s\p_\nu G(s,\nu) \; ds\right|\leq C K  (\mu-\del)^\ha.
\end{gather*}
Hence, for $\nu>0,$
\begin{gather*}
\limsup_{\del\rightarrow 0} |\p_\nu G(\del,\nu)| \leq \liminf_{\mu\rightarrow 0} |\p_\nu G(\mu,\nu)|.
\end{gather*}
Hence, $\lim_{\mu\rightarrow 0} |\p_\nu G(\mu,\nu)|$ exists. On the other hand, since $\mcr_+(0,f)(s,y)=0,$ $y \in \Gamma$ and $s\leq s_0,$  so, according to \eqref{newradf} it follows that 
\begin{gather*}
\p_\nu G(0,\nu)=0, \;\ \nu\in (0, e^{\frac{s_0}{2}}).
\end{gather*}

Now we use \eqref{regG} to show that if $\nu \in (0,e^{\frac{s_0}{2}}),$ then there exists $C>0$
\begin{gather}
|\p_\nu G(\mu,\nu)|= \left|\int_0^\mu \p_\mu\p_\nu G(\mu,\nu) \; d\mu\right| \leq C K \mu^\ha. \label{decay1}
\end{gather}

Since $W(\mu,\mu,y)=0,$  we have for $\mu<\nu,$
\begin{gather}
|G(\mu,\nu)|= |\int_\mu^\nu \p_s G(\mu,s) \; ds| \leq C K \mu^\ha (\nu-\mu)^\ha. \label{bdfG}
\end{gather}
This shows that for every $\phi\in C_0^\infty(U)$
\begin{gather*}
\left|\int_{\p X} \wtw(\mu,\nu,y) \phi(y) \; dy \right| \leq C K \mu^\ha, \\
\left|\int_{\p X} \p_\nu \wtw(\mu,\nu,y) \phi(y) \; dy \right| \leq C K  \mu^\ha.
\end{gather*}

Since $C_0^\infty(\mr^2)\times C_0^\infty(U)$ is dense in $C_0^\infty(\mr^2\times U),$ it follows that for any $\psi(\mu,\nu,y),$ with $|\mu|\leq \eps,$ $|\nu|\leq \eps,$
\begin{gather}
\begin{gathered}
\left|\int_{\p X} \wtw(\mu,\nu,y) \psi(\mu,\nu, y) \; dy \right| \leq C \left(\sum_{|\alpha|\leq 2} \sup |\p_y^\alpha \psi| \right) ||f||_{L^2(X)}\;  \mu^\ha, \\
\left|\int_{\p X} \p_\nu \wtw(\mu,\nu,y) \psi(\mu,\nu,y) \; dy \right| \leq C \left(\sum_{|\alpha|\leq 2} \sup |\p_y^\alpha \psi| \right) ||f||_{L^2(X)}\;  \mu^\ha.
\end{gathered}\label{linftybds}
\end{gather}
Now we differentiate \eqref{QestG} with respect to $\p_\nu.$  We have for $|\mu|<\eps,$ $|\nu|<\eps,$
\begin{gather*}
\p_\nu \p_\mu\p_\nu G(\mu,\nu)= \int_{\p X} \left[ \p_\nu \wtw(\mu,\nu,y) Z^*(\mu\nu,D_y)\phi(y)+ \wtw(\mu,\nu,y) \p_\nu  Z^*(\mu\nu,y)) \phi(y)\right]  \; dy, 
\end{gather*}
and so we obtain from \eqref{linftybds}
\begin{gather*}
|\p_\mu\p_\nu^2 G(\mu,\nu,y)| \leq C (\sum_{|\alpha|\leq 4} |\sup \p_y^\alpha \phi| ) ||f||_{L^2(X)}\;   \mu^\ha
\end{gather*}
Let us denote $K_N(\psi)= \left(\sum_{|\alpha|\leq N} |\sup \p_y^\alpha \phi| \right)||f||_{L^2(X)}.$ Since $\wtw(\mu,\mu,y)=0,$ it follows that $\p_\mu\p_\nu G(\mu,\mu)=0,$ and so we have
\begin{gather}
|\p_\mu\p_\nu G(\mu,\nu)|= \left|\int_\mu^\nu \p_\mu \p_s^2 G(\mu,s) \; ds\right| \leq K_4(\phi) \mu^\ha. \label{2point18}
\end{gather}
On the other hand, since $W(\mu,\mu,y)=0,$ it follows that  $(\p_\mu W)(\mu,\mu,y)=-(\p_\nu W)(\mu,\mu,y).$ In particular, when $\nu=\mu,$ we have
\begin{gather*}
|\p_\mu G(\mu,\mu)|\leq C K_2(\phi) \mu^\ha,
\end{gather*}
and since
\begin{gather*}
\p_\mu G(\mu,\nu)=(\p_\mu G)(\mu,\mu) + \int_\mu^\nu \p_s\p_\mu G(\mu,s) \; ds,
\end{gather*}
 we have
\begin{gather}
|\p_\mu G(\mu,\nu)| \leq C(K_2(\phi) + K_4(\phi)) \mu^\ha. \label{decay20}
\end{gather}

Proceeding as above,  since $\p_\nu G(0,\nu)=0,$ it follows from \eqref{2point18}  that $|\p_\nu G(\mu,\nu)|\leq C K_4(\phi) \mu^\tha,$ and since $G(\mu,\mu)=0,$ then $|G(\mu,\nu)|\leq C K_4(\phi) \mu^\tha,$
and $|\p_\mu \p_\nu^2 G(\mu,\nu)|\leq C K_6(\phi) \mu^\tha.$  So iterating this argument,  and using the symmetry of $W$ we get that for $k\geq 0,$
\begin{gather}
\begin{gathered}
\p_\mu^k G(0,\nu)=0,  \;\  \p_\nu^k G(\mu, 0)=0, \\
|(\p_\mu G)(\mu,\mu)|= |(\p_\nu G)(\mu,\mu)| \leq C \mu^{k}.
\end{gathered}\label{inftyorder}
\end{gather}
In particular this shows that, in the sense of distributions, $\wtw$ can be extended across the wedge $\{\mu=0\}\cup \{\nu=0\}$ by setting $\wtw(\mu,\nu,y)= 0$
if $ \mu, \nu \in (-\eps, 0],$  satisfying 
\begin{gather*}
\wtq \wtw=0 \text{ in } (-\eps,\eps)\times (-\eps, \eps) \times \Gamma.
\end{gather*}
We also know more about the regularity of $\wtw$ in the variable $y.$ From \eqref{enerest} we have 
\begin{gather}
\int_{-\eps}^\eps\int_{-\eps}^\eps \int_{\p X} \left[ |\wtw|^2+ \mu|\p_\mu \wtw|^2+ \nu|\p_\nu \wtw|^2+ \mu\nu(\mu+\nu) |d_{h(\mu \nu)} \wtw|^2\right] \, dy d\mu d\nu \leq C ||f||_{L^2(X)}.\label{enerest1}
\end{gather}
The next step is to prove the following unique continuation result
\begin{lemma}\label{unique-cont-2} Let  $\Gamma \subset \p X$ be open and let  $\wtw(\mu,\nu,y) \in C^\infty((-\eps,\eps)\times (-\eps, \eps); L^2( \Gamma))$   satisfy \eqref{enerest1} and be such that for $ y \in \Gamma,$  $\wtw(\mu,\nu,y)$ is supported in   $\{\mu\geq 0, \nu \geq 0\}.$ If 
\begin{gather}
\begin{gathered}
\wtq \wtw(\mu,\nu,y)=0, \text{ in } (-\eps,\eps)\times (-\eps,\eps)\times \Gamma \\
\end{gathered}\label{unique-cont-3}
\end{gather}
then for any  $p \in \Gamma$ there exists $\del>0$ and such that $W(\mu,\nu,y)=0$ provided
$|\mu|<\del,$ $|\nu|<\del$ and $|y-p|<\del.$
\end{lemma}
\begin{proof}  As usual, the proof of this result is based on a Carleman estimate.  First, it is convenient to make the change of variables
\begin{gather*}
r=\frac{\mu+\nu}{2}, \tau=\frac{\mu-\nu}{2},
\end{gather*}
and we have
\begin{gather*}
\wtq_1=4\wtq= \p_r^2-\p_\tau^2 -4(r^2-\tau^2)\Delta_h + (r^2-\tau^2)B_1(r,\tau,y, D_y)+ C_1(r,\tau,y).
\end{gather*}
Here,  $h=h(r^2-\tau^2),$ $B_1(r,\tau,y,D_y)=4B((r^2-\tau^2), y, D_y)$ and $C_1(r,\tau,y)=4C((r^2-\tau^2), y),$ and
\begin{gather*}
\wtw(r,\tau,y)= W(r+\tau, r-\tau, y), \;\ \;\ 
\wtw \text{ is supported in }  |\tau| \leq r.
\end{gather*} 
\begin{figure}
% Generated with LaTeXDraw 2.0.8
% Sat Jun 01 02:55:22 EDT 2013
% \usepackage[usenames,dvipsnames]{pstricks}
% \usepackage{epsfig}
% \usepackage{pst-grad} % For gradients
% \usepackage{pst-plot} % For axes
\scalebox{.75} % Change this value to rescale the drawing.
{
\begin{pspicture}(0,-2.44)(15.54,2.48)
\pscircle[linewidth=0.04,dimen=outer](1.74,-0.7){1.74}
\psline[linewidth=0.04cm](10.1,-0.78)(15.1,-0.8)
\usefont{T1}{ptm}{m}{n}
\rput(5.84,-0.995){$\mu$}
\usefont{T1}{ptm}{m}{n}
\rput(1.07,2.285){$\nu$}
\usefont{T1}{ptm}{m}{n}
\rput(15.04,-1.095){$\mu$}
\usefont{T1}{ptm}{m}{n}
\rput(9.53,2.205){$\nu$}
\psline[linewidth=0.04cm](1.64,2.24)(1.68,-0.78)
\psline[linewidth=0.04cm](1.68,-0.78)(6.68,-0.78)
\usefont{T1}{ptm}{m}{n}
\rput(1.13,-1.315){$\wtw=0$}
\usefont{T1}{ptm}{m}{n}
\rput(2.6,-0.035){$\wtq \wtw=0$}
\psline[linewidth=0.04cm](10.1,-0.8)(10.08,2.3)
\usefont{T1}{ptm}{m}{n}
\rput(10.31,-1.015){$\wtw=0$}
\usefont{T1}{ptm}{m}{n}
\rput(10.67,-0.415){$\wtw=0$}
\pscircle[linewidth=0.04,dimen=outer](10.27,-0.79){0.87}
\end{pspicture} 
}
\caption{If $\wtq\wtw=0$ and $\wtw=0$ in the region on the left, then $\wtw=0$ in the region on the right.}
\end{figure}

 Since the fibers over any fixed $(r,\tau)$ are not compact,   one would have to cut-off in the variable $y$ to obtain the desired Carleman estimate.  However this would produce error terms that could not be controlled.  One needs to convexify the support of the solution $\wtw.$  For small $\delta,$ we choose local coordinates $y$ valid in $B(p,\del)$ such that
$p=0,$ and set
\begin{gather*}
\rho= r+|y|^2.
\end{gather*}
In this case the region which contains the support of $W,$ $|\tau|\leq r,$ can be described by 
\begin{gather}
|\tau| + |y|^2 \leq \rho, \label{new-supp}
\end{gather}
and the operator $\wtq_1$ can be written as
\begin{gather*}
\wtq_1= (1+F\vphi) \p_\rho^2-\p_\tau^2+4 \vphi \sum_{ij} h^{ij}\p_{y_i}\p_{y_j} +\sum_j \vphi R_j \p_{y_j}\p_\rho + \sum_j \vphi B_j\p_{y_j} +  H \vphi \p_\rho+ D,
\end{gather*}
where $\vphi=(\rho-|y|^2)^2-\tau^2$ and $F=F(\rho,\tau,y),$ $h^{ij}=h^{ij}(\rho,\tau,y),$ $R_j=R_j(\rho,\tau,y),$ $B_j=B_j(\rho,\tau,y),$  
$D=D(\rho,\tau,y)$ and $H=H(\rho,\tau,y)$ are $C^\infty$ functions.  Here we used that $\Delta_h$ is the positive Laplacian, see \eqref{deftheta}, and hence the sign of the third term. Moreover, there exists a constant $C$ such that for small $\eps,$
\begin{gather*}
\sum_{i,j=1}^n h^{ij}(\rho,\tau,y) \xi_i \xi_j \geq C \sum_j \xi_j^2, \;\  |(\rho,\tau)|<\eps, \;\ y \in \p X.
\end{gather*}

 Let
\begin{gather}
\begin{gathered}
\wtq_{k}=\rho^{-k}\wtq_1 \rho^k=\wtq+k\mcl+ k(k-1)\rho^{-2}(1+\vphi F)- k\rho^{-1} \vphi H , \text{ where }  \mcl \text{ is the vector field }\\
\mcl= 2(1+\vphi F)\rho^{-1}\p_\rho+ \rho^{-1} \vphi \sum_j R_j \p_{y_j}  
\end{gathered}\label{vfieldl} 
\end{gather} 
  In what follows we will denote the inner product
\begin{gather*}
\lan u,v\ran= \int_0^\eps \int_0^\eps \int_{\p X} u(\rho,\tau,y) \overline{v}(\rho,\tau,y) \; dy d\rho d\tau, \\
\text{ and } ||u||^2=\lan u, u \ran,
\end{gather*}
for $\eps$ small enough such that these coordinates are valid.
 We want to estimate the product 
\begin{gather*} 
\lan \wtq_{k} V,  \mcl V\ran \text{ for }
V \in C_0^\infty((-\gamma,\gamma)\times (-\gamma,\gamma)\times \p X), \text{ supported in } |\tau|+|y|^2 \leq \rho.
\end{gather*}
Without loss of generality, we assume that $V$is real valued.     
 From \eqref{new-supp} we find that $\vphi \rho^{-2} \leq C, $ on the support of $V$ and so, for $\gamma$ small enough, again using that $|\tau|\leq \rho-|y|^2,$ there exists a constant $M>0$ such that first term of this product satisfies
\begin{gather}
\begin{gathered}
\lan ((1+\vphi F)\p_\rho^2-\p_\tau^2)V, \mcl V \ran \geq \\
 M (||\rho^{-1} \p_{\rho} V||^2+ ||\rho^{-1} \p_\tau V||^2- \sum_j ||\rho^{-\oq}(\rho-|y|^2)^\ha\p_{y_j} V||^2 ).
 \end{gathered}\label{term1}
 \end{gather}

For $\gamma$ small enough,  the second term can be bounded by
\begin{gather}
\begin{gathered}
\sum_{ij} \lan \vphi h^{ij}\p_{y_i} \p_{y_j} V, \mcl V\ran  \geq M( \sum_j ||\rho^{-\ha}(\rho-|y|^2)^\ha \p_{y_j} V||^2- ||\rho^{-\ha}\p_\rho V||^2).
\end{gathered}\label{term2}
\end{gather}
For $\gamma$ small enough, we also have
\begin{gather}
\sum_{jk} \lan \vphi R_k \p_{y_k}\p_\rho V, \mcl V\ran \geq 
- M( \sum_j ||(\rho-|y|^2)\p_{y_j} V||^2- ||\p_\rho V||^2-||V||^2), \label{term2new}
\end{gather}
and 
\begin{gather}
\begin{gathered}
\lan (\vphi \sum_j B_j \p_{y_j}+ D+ \vphi H \p_\rho) V,  \mcl V \ran \geq  - M( ||V||^2+ ||\rho^{-1} \p_\rho V||^2 +\sum_j ||(\rho-|y|^2)^\ha \p_{y_j} V||^2). \end{gathered}\label{term3}
\end{gather}
Finally,
\begin{gather}
\lan \left( k(k-1)\rho^{-2}(1+\vphi F)+ k\rho^{-1} \vphi H\right) V, \mcl V \ran\geq  M k^2 ||\rho^{-2} V||^2. \label{term4}
\end{gather}
Putting together terms \eqref{term1}, \eqref{term2}, \eqref{term2new} \eqref{term3} and \eqref{term4} we deduce that, for $\gamma$ small enough, there exists $M>0$
\begin{gather*}
\lan \wtq_{k} V,  \mcl V\ran \geq   M( ||\rho^{-1} \p_\rho V||^2 +  || \rho^{-1} \p_\tau V||^2 + 
\sum_j ||\rho^{-\ha}(\rho-|y|^2) \p_{y_j} V||^2 + k ||\mcl V||^2 + k^2 ||\rho^{-2} V||^2).
\end{gather*}
Since,
\begin{gather*}
\lan \wtq_{k} V, \mcl V \ran \leq \ha( ||\wtq_k V||^2+ ||\mcl V||^2)
\end{gather*}
then,  if $k$ is large enough and $\gamma$ is small enough,
\begin{gather}
 ||\wtq_{k} V||^2 \geq   M( ||\rho^{-1} \p_\rho V||^2+ || \rho^{-1} \p_\tau V||^2 + 
 ||\rho^{-\ha}(\rho-|y|^2)^\ha \nabla_y V||^2 + \frac{k}{2} ||\mcl V||^2 + k^2 ||\rho^{-2} V||^2).\label{carle}
\end{gather}

Let $\chi\in C^\infty(\mr),$ $\chi(\rho)=1$ if $\rho\in (-\frac{\gamma}{2},\frac{\gamma}{2})$ and $\chi(\rho)=0$ if $|\rho|>\frac{3\gamma}{4}.$ Since $W$ is supported in $\mu\geq 0,$ $\nu\geq 0,$ it follows that
in coordinates $(\rho,\tau,y),$ $\wtw$ is supported in $|\tau|+ |y|^2 \leq \rho,$ and hence $V=\chi(\rho) \wtw$ is compactly supported for small $\gamma.$  We would like to apply \eqref{carle} to $V=\chi(\rho) \wtw,$ but $\wtw$ is not necessarily  smooth up to $\{\mu=0\},$ $\{\nu=0\}.$  So we have to molify $\wtw$ in the $y$-variable, and we  let
$\psi(y)$ be a $C_0^\infty$ function supported in $|y-p|<\del,$ with $\int \psi(y) \; dy=1,$ and define
$\psi_m(y)=m^{n}\psi(m y).$ Then for $m$ large, 
\begin{gather*}
\wtw_m=\chi(\rho)\psi_m*\wtw \in C_0^\infty((-\eps,\eps)\times (-\eps,\eps) \times \Gamma),
\end{gather*}
and since $\wtq_k= \rho^{-k}\wtq_1 \rho^k,$  we deduce from \eqref{carle} that
\begin{gather}
\begin{gathered}
  ||\rho^{-k} \wtq \wtw_m||^2 \geq   \\ M\left( k^2 || \rho^{-2-k}  \wtw_m||^2+  ||\rho^{-\ha-k}(\rho-|y|^2)^\ha \nabla_y \wtw_m||^2 +  ||\rho^{-1}\p_\rho(\rho^{-k} \wtw_m)||^2+
   ||\rho^{-1-k}\p_\tau \wtw_m)||^2 \right).
\end{gathered}  \label{carle1}
\end{gather}
To get an estimate for $\chi(\rho)\wtw$ from this one we use  Friedrich's lemma to handle the commutators of $\wtq$ and $\psi_m.$ We use \eqref{enerest1} and Theorem 2.4.3 of \cite{hormander64} to show that
\begin{gather*}
||\vphi \chi(\rho)\left( h^{ij}\p_{y_i}\p_{y_j} (\psi_m * \wtw)-(h^{ij}\p_{y_i}\p_{y_j} \wtw)*\psi_m\right)||_{L^2} \leq C|| \chi(\rho)\vphi \nabla_y \wtw||_{L^2}, \\
||\vphi h^{ij}\p_{y_i} (\psi_m *\p_\rho(\rho^{-k} \chi(\rho)\wtw))-\vphi (h^{ij}\p_{y_i}\p_{\rho}( \rho^{-k}\chi(\rho)\wtw))*\psi_m||_{L^2} \leq C||\vphi \p_\rho \rho^{-k}\chi(\rho)\wtw||_{L^2}, \\
||\rho^{-k}\chi(\rho)(\rho-|y|^2)^\ha(\nabla_y (\psi_m * \wtw)-(\nabla_y \wtw)*\psi_m)||_{L^2} \leq C|| \rho^{-k}\chi(\rho)(\rho-|y|^2)^\ha \wtw||_{L^2}, 
\end{gather*}
and moreover
\begin{gather*}
\lim_{m\rightarrow \infty} ||\vphi \chi(\rho)\left( h^{ij}\p_{y_i}\p_{y_j} (\psi_m * \wtw)-(h^{ij}\p_{y_i}\p_{y_j} \wtw)*\psi_m\right)||_{L^2} =0 \\
\lim_{m\rightarrow \infty} ||\vphi h^{ij}\p_{y_i} (\psi_m *\p_\rho\rho^{-k} \chi(\rho)\wtw)-\vphi (h^{ij}\p_{y_i}\p_{\rho} \rho^{-k}\chi(\rho)\wtw)*\psi_m||_{L^2}=0 \\
\lim_{m\rightarrow \infty} ||\rho^{-k}\chi(\rho)(D (\psi_m * \wtw)-(D \wtw)*\psi_m)||_{L^2}=0.
\end{gather*}
Using these estimates, and letting $m\rightarrow \infty$ in \eqref{carle1} we obtain, for $k$ large enough,
\begin{gather}
\begin{gathered}
 ||\rho^{-k} \wtq_1 \chi(\rho) \wtw||^2 \geq   \\ M\left( k^2 || \rho^{-2-k} \chi(\rho) \wtw||^2+ ||\rho^{-k-\ha} (\rho-|y|^2) \chi(\rho) \nabla_y \wtw||^2+||\rho^{-1}\p_\rho(\rho^{-k} \wtw)||^2+ ||\rho^{-1-k} \p_\tau (\chi(\rho) \wtw||^2\right).
\end{gathered} \label{carle2}
\end{gather}

Since $\wtq_1 \wtw=0$ and $\wtq_1 (\chi(\rho) \wtw)=[\wtq_1, \chi(\rho)] \wtw$ is supported in $\rho\geq \frac{\gamma}{2},$ we deduce from \eqref{carle2} that there exists $C=C(\wtw)>0$ such that

\begin{gather*}
C (\frac{\gamma}{2})^{-k} \geq M k^2 ||\rho^{-2-k} \wtw||^2\geq M k^2 ||\rho^{-2-k} \wtw||_{L^2(\rho\leq \frac{\gamma}{2})}^2\geq M k^2 (\frac{\gamma}{2})^{-k-2} 
|| \wtw||_{L^2(\rho\leq \frac{\gamma}{2})}^2.
\end{gather*}
Hence
\begin{gather*}
k^2 || \wtw||_{L^2(\rho\leq \frac{\gamma}{2})}^2 \leq C,
\end{gather*}
and therefore $\wtw=0$ if $\rho \leq \frac{\gamma}{2}.$  Returning to coordinates $(r,\tau,y),$ we obtain
$\wtw=0$ if $r+|y|^2 \leq \frac{\gamma}{2}$ and so $\wtw(r,\tau,y)=0$ i $r<\frac{\gamma}{4}$ and $|y|\leq \frac{\gamma}{4}.$ This ends the proof of the Lemma.
\end{proof}

Since $W(\mu,\nu,y)= V_+(\mu\nu, 2\log \nu,y)$ satisfies the hypotheses of Lemma \ref{unique-cont-2}, we conclude that for any $p\in \Gamma,$ there exists  $\del>0$ and such that
\begin{gather*}
V_+(\mu\nu, 2\log \nu,y)=0 \text{ provided } |y-p|<\del, \text{ and }  \mu, \nu \in (-\del, \del).
\end{gather*}
In view of \eqref{defmunu}, we deduce that
\begin{gather}
x^{-\frac{n}{2}} u(s-\log x, x, y)=V_+(x,s,y)=0 \text{ provided }  |y-p|<\del,  \; x \in (0, \del) \text{ and } \log x < s< \log \del. \label{region-van}
\end{gather}
We have also shown that $V_+$ can be extended to the region $x<0$ such that, for $P$ as in \eqref{operatorP}
\begin{gather}
\begin{gathered}
PV_+=0 \\
V_+(x,s,y)=0 \;\ x<0, \;\ s< s_0 \text{ and } y\in \Gamma. 
\end{gathered}\label{vanish-v}
\end{gather}
\end{proof}

The next step in the proof is
\begin{prop}\label{step-2}  Let  $V(x,s,y)$ be in $H^1_\loc$ in the region $|x|<\eps,$ $y \in \p X$ and $s \in \mr,$ satisfy $PV=0,$ where $P$ is given by \eqref{operatorP}.  Suppose 
$V(x,s,y)=0$ for $x\in (-\eps,0),$ $s\leq s_0$ and $y \in \Gamma.$  Let $s_1<s_0$  and 
$p \in \p X,$ and suppose that  there exists $\del>0$   such that $V(x,s,y)=0$ if $x< \del,$   $|y-p|<\del$ and $s< s_1.$ 
Then there exists $\beta\in (0,\del)$ such that $V(x,s,y)=0$ if $x<\beta,$ $|y-p|<\beta$ and $s<s_1+\frac{1}{4}(s_0-s_1).$ Figure \ref{fig3} illustrates the result. 
\end{prop}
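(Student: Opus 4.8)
The statement is a \emph{local unique continuation} result for $P$: it enlarges, in the variable $s$, the region on which the solution $V$ of $PV=0$ vanishes. The structural obstacle is that the hypersurfaces $\{s=\mathrm{const}\}$ are characteristic for $P$ --- in the coordinates \eqref{metric} the principal symbol of the operator \eqref{operatorP} is
\[
p_2(x,s,y,\xi,\sigma,\eta)=-2\xi\sigma-x\,\xi^{2}-x\,h(x,y,\eta),
\]
which vanishes on every covector $(0,\sigma,0)$, for all $x$. Hence $V$ cannot be continued across $\{s=\mathrm{const}\}$ directly, and one must instead use \emph{tilted and convexified} hypersurfaces $\{\phi=\mathrm{const}\}$, whose conormal has a nonzero $dx$-component (so that $p_2\ne 0$ on them near $\{x=0\}$) and which are bent by a quadratic term so as to be pseudoconvex for $P$. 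The hypothesis that $V\equiv0$ for $x<0$, $s\le s_0$ supplies the ``room'' for this: from the side $x<0$ the solution vanishes to infinite order all the way up to $s_0$, while on the side $x\ge 0$ it is only controlled below $s_1$, so the gap $s_0-s_1$ bounds how far a convexified surface can be pushed, and the bookkeeping produces the definite but only fractional gain $\oq(s_0-s_1)$. This is the ``bulk'' analogue of the continuation across $\{x=0\}$ that was obtained at the corner (as $s\to-\infty$) in Proposition \ref{step-1}.

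Here is the plan. Fix $p$, choose local coordinates $y$ near $p$ with $p=0$, shrink $\eps$ and pick $\del_1>0$ so that $B(p,\del_1)\subset\Gamma$, and work in the slab $\mathcal{Q}=\{|x|<\eps,\,|y|<\del_1,\,s<s_0\}$, on which $PV=0$ and $V\equiv0$ for $x<0$. Consider the weight
\[
\phi(x,s,y)=(s-s_1)-a\,x+b\,(x^{2}+|y|^{2}),
\]
with $a>0$ small (the tilt) and $b>0$ large (the convexity). One checks that for $\lambda_0=-a^{2}/(4b)$ the set $\{\phi<\lambda_0\}\cap\mathcal{Q}$ is contained in $\{s<s_1\}\cap\mathcal{Q}$, hence in the known-vanishing set $\{x<0,\,s\le s_0,\,y\in\Gamma\}\cup\{x<\del,\,|y|<\del,\,s<s_1\}$; and that there is $\lambda_1>\oq(s_0-s_1)$ such that every level $\{\phi=\lambda\}$ with $\lambda\in[\lambda_0,\lambda_1]$ is non-characteristic and pseudoconvex for $P$ throughout $\mathcal{Q}$. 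The value of $\lambda_1$ comes from balancing the two geometric constraints implicit above: the parabola $\{s=s_1+\lambda+a x-b(x^2+|y|^2)\}$ must clear $s_0$ on the side $x<0$ and remain below $s_1$, at the radius where the cut-off in the Carleman argument will be switched off, on the side $x\ge0$; this balance, together with the pseudoconvexity bound near $\{x=0\}$, is what confines the gain to a fraction of $s_0-s_1$, and $\oq$ is chosen as a convenient value below the true bound. Finally $\beta\in(0,\del)$ is taken small enough (compared with $\lambda_1-\oq(s_0-s_1)$) that the box $\{|x|<\beta,\,|y|<\beta,\,s<s_1+\oq(s_0-s_1)\}$ fits inside $\{\phi<\lambda_1\}$.

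The analytic core is a Carleman estimate. For $q_0\in\{\phi=\lambda\}$ put $\varphi=\phi-\lambda-c\,|z-q_0|^{2}$ with $c$ large, so that $\varphi\le-\gamma<0$ off a small ball around $q_0$; then there are $M,k_0>0$ and a neighbourhood $\mcu$ of $q_0$ such that for $k\ge k_0$ and $w\in C_0^\infty(\mcu)$,
\[
\|e^{k\varphi}Pw\|^{2}\ \ge\ M\Big(k\,\|e^{k\varphi}w\|^{2}+\|e^{k\varphi}\p_x w\|^{2}+\|e^{k\varphi}\p_s w\|^{2}+\|e^{k\varphi}\,|x|^{\ha}\nabla_y w\|^{2}\Big).
\]
I would prove this exactly in the spirit of Lemma \ref{unique-cont-2}: conjugate $P$ by $e^{k\varphi}$, pair $e^{k\varphi}Pw$ against the weighted ``radial'' field built from $\nabla\varphi$ (with the same $\rho^{-1}\p_\rho$-type weights used there to absorb the degeneracy of $P$ at $\{x=0\}$), and use the strict convexity of $\varphi$ (from $b,c$ large) to make the commutator positive modulo terms absorbed on the right; the degenerate factors of $x$ in $p_2$ are harmless because they always accompany the good derivatives, as in the energy identity \eqref{enerest1}. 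To apply it, mollify $V$ in $y$ and control the commutators with $P$ by Friedrichs' lemma, as was done after \eqref{carle1}, and take $w=\chi V$ with $\chi=1$ near $q_0$ and $\supp\nabla\chi$ contained in $\{\varphi\le-\gamma\}$. Then $\|e^{k\varphi}P(\chi V)\|=\|e^{k\varphi}[P,\chi]V\|\le Ce^{-k\gamma}$, whereas the Carleman inequality gives $k^{\ha}\|e^{k\varphi}\chi V\|\le C'\|e^{k\varphi}P(\chi V)\|$ and $\|e^{k\varphi}\chi V\|\ge e^{-k\gamma/2}\|V\|_{L^2(\{\chi=1\}\cap\{\varphi>-\gamma/2\})}$; combining and letting $k\to\infty$ forces $V\equiv0$ near $q_0$. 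Varying $q_0$ over $\{\phi=\lambda\}$ shows ``$V\equiv0$ on $\{\phi<\lambda\}$'' implies ``$V\equiv0$ on $\{\phi<\lambda+\delta\}$'' for some $\delta>0$; since $\lambda_0$ starts the induction and pseudoconvexity holds up to $\lambda_1$, a continuity argument gives $V\equiv0$ on $\{\phi<\lambda_1\}$, which by the choice of $\beta$ contains the asserted box.

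The step I expect to be the main obstacle is the construction and verification of the weight: one needs a single function that is simultaneously non-characteristic and pseudoconvex for the \emph{degenerate} operator $P$, uniformly as $x\to0$ --- this cannot be read off from the classical second-order theory, since $p_2$ vanishes along $\{\xi=0\}\cap\{\eta=0\}$ and along $\{x=0\}$, so the estimate must be of the weighted (``radial-field'') type of \eqref{enerest1}--\eqref{carle} rather than the textbook Carleman estimate --- and that is at the same time compatible with the asymmetric data (known up to $s_0$ for $x<0$, only up to $s_1$ for $x\ge0$); this reconciliation is exactly what yields a fixed gain but forces it to be a fraction of $s_0-s_1$. A routine but necessary point, handled as in Proposition \ref{step-1} and Lemma \ref{unique-cont-2}, is that $V$ need not be smooth up to $\{x=0\}$, so every step above is performed after mollification in $y$, with the commutator errors controlled by Friedrichs' lemma.
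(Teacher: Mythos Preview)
Your overall strategy---tilt the level surfaces so they acquire a $dx$-component, prove a Carleman estimate, and push across---is correct, and your accounting for why the gain is a fixed fraction of $s_0-s_1$ is on target. But you have misdiagnosed the main difficulty, and as a result you propose considerably more machinery than the paper uses.

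Your central claim is that the degeneracy of $p_2=-2\sigma\xi-x\xi^2-xh(x,y,\eta)$ at $x=0$ puts the problem outside classical second-order unique-continuation theory and forces a bespoke weighted estimate in the style of Lemma~\ref{unique-cont-2}. This is not so. H\"ormander's framework for real second-order symbols (Theorem~28.2.3 and Proposition~28.3.3 of \cite{Ho}) requires only non-characteristicity and strict pseudoconvexity of the weight, with no principal-type hypothesis. The paper exploits this directly, together with the translation invariance of $P$ in $s$, by taking the \emph{linear} weight
\[
\varphi(x,s,y)=-x-k(s-s_1)-|y-p|^{2},\qquad k>0\ \text{small}.
\]
Non-characteristicity is $p(\nabla\varphi)=-2k-x-h(x,y,-2(y-p))\neq 0$ for $x>-k/2$ and $|y-p|$ small. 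For pseudoconvexity one computes $H_p\varphi$ and $H_p^2\varphi$ and checks that on $\{p=H_p\varphi=0\}$ one has $H_p^2\varphi\ge Ck(\xi^2+|\eta|^2)$; the point you miss is that even at $x=0$ (where $p=H_p\varphi=0$ forces $\xi=\sigma=0$) the term $2(k+x)h(x,y,\eta)$ survives in $H_p^2\varphi$ and gives strict positivity. Since nothing depends on $s$, the pseudoconvex region is a slab $\{-k/2<x<\beta,\ |y-p|<\beta\}\times\mr_s$, and H\"ormander's theorem yields the \emph{standard} Carleman estimate
\[
C\,\|e^{\tau\psi}Pv\|^2\ \ge\ \tau^2\|e^{\tau\psi}v\|^2+\tau\|e^{\tau\psi}v\|_{H^1}^2,\qquad \psi=e^{\lambda\varphi},
\]
with the full $H^1$ norm on the right---no $|x|^{1/2}$ loss. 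The cutoff argument then gives $V=0$ on $\{\varphi>-\gamma\}$ with $\gamma=k\alpha(s_0-s_1)$ (after choosing $k$ small enough that this is the binding constraint among the three cutoff thresholds), i.e.\ on $\{s<s_1+\tfrac{\alpha}{3}(s_0-s_1),\ x<\gamma/3,\ |y-p|^2<\gamma/3\}$ for any $\alpha<1$.

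So your quadratic-in-$x$ term, the convexity parameter $b$, the layer-by-layer induction on $\lambda$, and the degenerate Carleman estimate with $|x|^{1/2}\nabla_y$ are all unnecessary. The contrast with Lemma~\ref{unique-cont-2} is real: there one works at the corner $\mu=\nu=0$ (equivalently $s\to-\infty$), where $\wtq=\p_\mu\p_\nu-\mu\nu\Delta_h+\cdots$ loses all transverse derivatives and the weight $\rho^{-k}$ is singular on the support of the solution, so a hand-built estimate is genuinely required. Here one is in the interior in $s$, the cross term $2\p_x\p_s$ is uniformly nondegenerate, the weight is smooth, and the estimate is already in H\"ormander's book. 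Your route would likely go through if executed carefully, but it reinvents a wheel the paper simply cites.
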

\begin{figure}[htb!]
% Generated with LaTeXDraw 2.0.8
% Wed Jun 05 17:27:16 EDT 2013
% \usepackage[usenames,dvipsnames]{pstricks}
% \usepackage{epsfig}
% \usepackage{pst-grad} % For gradients
% \usepackage{pst-plot} % For axes
\scalebox{.75} % Change this value to rescale the drawing.
{
\begin{pspicture}(0,-3.97)(18.96,3.99)
\psline[linewidth=0.04cm](0.8,3.75)(0.8,-3.69)
\psline[linewidth=0.04cm](0.82,0.43)(7.26,0.47)
\psbezier[linewidth=0.04](2.38,-3.69)(2.4,0.21)(3.7461386,2.124097)(6.5,3.0538332)
\psline[linewidth=0.04cm](0.8,-0.73)(2.76,-0.75)
\usefont{T1}{ptm}{m}{n}
\rput(1.76,-1.885){$PV=0$}
\psline[linewidth=0.04cm](11.98,3.39)(12.04,-3.95)
\psline[linewidth=0.04cm](12.02,0.43)(18.22,0.45)
\psbezier[linewidth=0.04](13.5,-3.85)(13.520874,0.05)(14.925824,1.9640971)(17.8,2.8938332)
\psline[linewidth=0.04cm](12.0,-0.71)(13.96,-0.73)
\usefont{T1}{ptm}{m}{n}
\rput(7.39,0.435){$x$}
\usefont{T1}{ptm}{m}{n}
\rput(1.19,3.795){$s$}
\usefont{T1}{ptm}{m}{n}
\rput(18.65,0.455){$x$}
\usefont{T1}{ptm}{m}{n}
\rput(12.65,3.515){$s$}
\usefont{T1}{ptm}{m}{n}
\rput(6.65,2.635){$s=\log x$}
\usefont{T1}{ptm}{m}{n}
\rput(17.39,2.155){$s=\log x$}
\psline[linewidth=0.04cm,linestyle=dashed,dash=0.16cm 0.16cm](2.42,-2.51)(2.42,0.47)
\usefont{T1}{ptm}{m}{n}
\rput(2.43,0.755){$e^{s_1}$}
\usefont{T1}{ptm}{m}{n}
\rput(0.51,-0.325){$s_0$}
\usefont{T1}{ptm}{m}{n}
\rput(11.39,-0.405){$s_0$}
\psline[linewidth=0.04cm,linestyle=dashed,dash=0.16cm 0.16cm](13.54,-2.73)(13.52,0.49)
\usefont{T1}{ptm}{m}{n}
\rput(13.81,0.775){$e^{s_1}$}
\psline[linewidth=0.04,fillstyle=gradient,gradlines=2000,gradmidpoint=1.0](0.02,-0.73)(0.8,-0.73)(0.78,-3.75)(0.0,-3.75)(0.02,-3.77)
\psline[linewidth=0.04,fillstyle=gradient,gradlines=2000,gradmidpoint=1.0](11.22,-0.71)(12.04,-0.71)(12.02,-3.95)(11.2,-3.93)(11.22,-3.95)
\psline[linewidth=0.04,fillstyle=gradient,gradlines=2000,gradmidpoint=1.0](0.82,-2.73)(1.62,-2.73)(1.62,-3.73)(0.8,-3.73)
\psline[linewidth=0.04,fillstyle=gradient,gradlines=2000,gradmidpoint=1.0](12.04,-3.95)(12.84,-3.95)(12.82,-2.71)(12.02,-2.71)(12.04,-2.73)
\psline[linewidth=0.04cm,linestyle=dashed,dash=0.16cm 0.16cm](1.64,-2.75)(2.44,-2.73)
\psline[linewidth=0.04cm,linestyle=dashed,dash=0.16cm 0.16cm](12.86,-2.73)(13.58,-2.73)
\psline[linewidth=0.04cm,linestyle=dashed,dash=0.16cm 0.16cm](12.38,0.41)(12.42,-1.93)
\psline[linewidth=0.04,fillstyle=gradient,gradlines=2000,gradmidpoint=1.0](12.04,-1.95)(12.06,-1.95)(12.06,-1.95)(12.44,-1.95)(12.42,-2.71)(12.04,-2.69)(12.06,-2.71)
\usefont{T1}{ptm}{m}{n}
\rput(12.51,0.715){$\beta$}
\end{pspicture} 
}
\caption{The unique continuation across the wedge $\{s<s_1,\;  x<\del,\;  |y-p|<\del\} \cup \{ x<0, \; s<s_0,\;  |y-p|<\del\}.$ If $PV=0$ and $V=0$ in the colored region on the left, then $V=0$ in the colored region on the right.}
\label{fig3}
\end{figure}
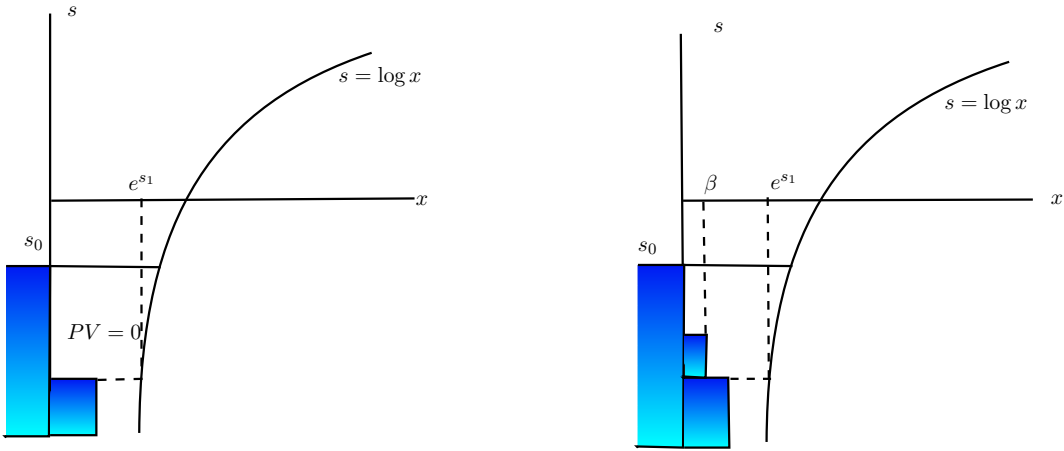
\begin{proof}  
 We will use unique continuation results due to H\"ormander, and we will need to work with suitable strictly pseudoconvex surfaces.  The key point here is that the operator $P$ is invariant under translations in the variable $s.$  Let
\begin{gather*}
\vphi(x,s,y)=-x-k(s-s_1)-|y-p|^2, \text{ where } k> 0 \text{ will be chosen later}.
\end{gather*}
Since for $|y-p|<\del,$ $V=0$ if $x\in (-\eps,0]$ and  $s<s_0,$ or if $x<\del$ and $x<s_1,$  we have, see figure \ref{fig3},
\begin{gather}
V(x,s,y)=0 \text{ if } \vphi>0, \;\ -\eps< x <\del, \;\  \text{ and } |y-p|<\del. \label{vanish-v1}
\end{gather}

 The principal symbol of the operator $P$ is
\begin{gather*}
p= -2\sigma \xi -x \xi^2-x h(x,y,\eta),
\end{gather*}
where $(\xi,\sigma,\eta)$ are the dual variables to $(x,s,y).$ Since $\nabla\vphi(x,s,y)=(-1,-k,-2(y-p)),$ we have
\begin{gather}
p(x,s,y,\nabla\vphi(x,s, y))=-2k-x-h(x,y, 2(y-p)), \label{nonch}
\end{gather}
 $\vphi$ is not characteristic at $(x,s,y)$ if $x>-\frac{2k}{1+h(x,y,-2(y-p))}>-k$ if $|y-p|<\beta$ is small enough  

The Hamiltonian of $p$ is
\begin{gather*}
H_p= -2\xi \p_s -2(\sigma+x \xi) \p_x -xH_{h} + (\xi^2+h+ x\p_x h) \p_\xi
\end{gather*}
where $H_h$ denotes the Hamiltonian of $h(x,y,\eta)$ in the variables $(y,\eta).$  Hence,
\begin{gather*}
(H_p\vphi)(x,s,y,\xi,\sigma,\eta) = 2(\sigma+x\xi)+2k \xi+x H_h |y-p|^2 \text{ and } \\
(H_p^2\vphi)(x,s,y,\xi,\sigma,\eta) = \\ -2(\sigma+x\xi)(2\xi +H_h |y-p|^2+ x\p_x H_h |y-p|^2) - (x H_h)^2 |y-p|^2+ 2(k+x)(\xi^2+h+x\p_x h).
\end{gather*}
If $H_p \vphi=0$, it follows that 
\begin{gather*}
H_p^2\vphi(x,s,y,\xi,\sigma,\eta)=
2(x+3k)\xi^2+ 2\xi( (k+x) H_h|y-p|^2+x \p_x H_h|y-p|^2) +  \\ 2(k+x)(h+x\p_x h)+ x\left( (H_h|y-p|^2)^2+ x H_h|y-p|^2 \p_x H_h|y-p|^2-x H_h^2 |y-p|^2\right).
\end{gather*}
If $|y-p|<\beta$ is small enough, there exists $C>0$ depending on $h$ only such that
\begin{gather*}
\left|H_p |y-p|^2\right| \leq C \beta |\eta|, \text{ and }
\left|\p_x H_p |y-p|^2\right| \leq C \beta |\eta|.
\end{gather*}
If we impose that  $-\frac{k}{2}<x<\beta,$ it follows that  there exists $\eps_0>0$ depending on $h$ such that if $\beta, k\in (0,\eps_0)$ small,  there exists $C>0$ such that 
\begin{gather*}
h+ x\p_x h \geq C |\eta|^2,
\end{gather*}
and hence
 \begin{gather*}
 H_p^2\vphi(x,s,p,\xi,\sigma,\eta)\geq kC (\xi^2- \beta |\xi||\eta|^2+ |\eta|^2) \geq C k (\xi^2+ |\eta|^2),  \\ \text{ if } -\frac{k}{2}<x <\beta \text{ and } k, \del \in (0,\eps_0).
 \end{gather*}

 So we conclude that there exists $\eps_0>0$ depending on $h$ such that
 \begin{gather}
  \begin{gathered}
 \text{ if } p(x,s,y,\xi,\sigma,\eta)=H_p\vphi(x,s,y,\xi,\sigma,\eta)=0 \text{ then } H_p^2 \vphi(x,s,y,\xi,\sigma,\eta)>0 \\
 \text{ provided } (\xi,\sigma,\eta)\not=0, \;\ -\frac{k}{2}<x<\beta, \;\ |y-p|<\beta, \;\ k, \beta \in (0,\eps_0).
 \end{gathered}\label{strpc}
  \end{gather}

 Since $P$ is of second order, we deduce from \eqref{nonch} and \eqref{strpc} that  the level surfaces of $\vphi$ are  strictly pseudoconvex in the region
 \begin{gather}
 -\frac{k}{2}<x <\beta, \;\ |y-p|<\beta, \text{ provided } k, \beta \in (0,\eps_0). \label{strpc-region}
 \end{gather}
 
As mentioned above, the invariance of $P$ under translations in $s$ imply that the conditions in \eqref{strpc-region}  do not depend on $s.$ Now we appeal to Theorem 28.2.3 and Proposition 28.3.3  of  \cite{Ho}  and conclude that if 
 \begin{gather*}
 Y=\{ -\frac{k}{4}<x<\frac{\beta}{2}, \;\  |y-p|<\frac{\beta}{\sqrt{2}},\;\  |s-s_1| < s_0-s_1\},
 \end{gather*}
  there exist
 $C>0$ and $\la>0$ large such that if $\psi=e^{\la \vphi},$
 \begin{gather}
C ||e^{\tau \psi} P v||^2\geq \tau^2 ||e^{\tau \psi} v||^2 +  \tau ||e^{\tau \psi} v||_{H^1}^2, \text{ for all } v \in C_0^\infty(Y) \text{ and } \tau\geq \tau_0>0. \label{carle-hor}
\end{gather}
 Let $\theta \in C_0^\infty(Y)$ with $\theta=1$ if $-\frac{k}{8}<x< \frac{\beta}{4},$ $|y-p|<\frac{\beta}{2}$ and $|s-s_1|< \alpha (s_0-s_1),$ $\alpha<1.$
Since $PV=0,$ it follows that  
\begin{gather*}
P(\theta V)=[P,\theta] V.
\end{gather*}
But for $(x,s,y)\in Y,$ $V(x,s,y)$ is supported in the region $x>0,$ $s>s_1,$  so we conclude that
\begin{gather*}
P(\theta(x,s,y) V) \text{ is supported in  } (x,s,y) \in Y \;\  x\geq \frac{\beta}{4}, \;\ s-s_1\geq \alpha (s_0-s_1), \;\ \alpha<1  \text{ and } |y-p|\geq \frac{\beta}{2}.
\end{gather*}
Therefore, by the definition of $\vphi$ we have
\begin{gather}
\vphi(x,s,y)\leq -\min\{ \frac{\beta}{4}, k\alpha (s_0-s_1), \frac{\beta^2}{4}\}  
\text{ on the support of } P(\theta V). \label{minphi}
\end{gather}
 Pick $k$ small so that $\min\{ \frac{\beta}{4}, k\alpha (s_0-s_1), \frac{\beta^2}{4}\}  = k\alpha(s_0-s_1)=\gamma.$ 
 Therefore we deduce from \eqref{carle-hor} and \eqref{minphi} that
\begin{gather*}
\tau^2 ||e^{\tau (e^{\la \vphi}- e^{-\la\gamma})}\theta V||^2 \leq C, \;\ \tau>\tau_0.
\end{gather*}
We remark that due to Friedrichs' Lemma,  one can apply \eqref{carle-hor} to $\theta V$ even though it is not $C^\infty,$ see \cite{Ho}. Therefore,
 $\theta V=0$ if $e^{\la \vphi}-e^{-\la \gamma}>0,$ so $\theta V=0$ if $\vphi >-\gamma.$
 So we deduce that 
 \begin{gather*}
 \theta V(x,s,y)=0 \text{ provided }  k(s-s_1)<  \frac{\gamma}{3}, \;\ 0< x < \frac{\gamma}{3} \;\ |y-p|^2<\frac{\gamma}{3}. 
 \end{gather*}
In particular,
 \begin{gather}
 V(x,s,y)=0 \text{ provided } s <s_1+\frac{\alpha}{3}(s_0-s_1), \;\ \alpha<1, \;\  0 < x < \frac{\gamma}{3}, \;\ |y-p|^2<\frac{\gamma}{3}. \label{vanish-v-2}
 \end{gather}
 This concludes the proof of Proposition \ref{step-2}.  
\end{proof}
 
 The final ingredient in the proof of Theorem \ref{support-local} is
 \begin{prop}\label{step-3}  Let $u(t,z)$ satisfy \eqref{wave} with initial data $f_1=0,$ $f_2=f\in L^2_{ac}(X)\cap C^\infty(\intx).$  Let $V_+(x,s,y)=x^{-\novt} u(s-\log x, x,y).$  Let  $p \in \p X,$ and suppose that  there exist $s_2\in \mr,$ $\gamma>0$ and $\del>0$   such that $V_+(x,s,y)=0$ if $0<x<\gamma,$ $\log x<  s<s_2$ and $|y-p|<\del.$  
Then  $u(t,z)=0$ for every $z\in X$ such that  there  exist  $(x,y)$ with $x<\gamma$ and $|y-p|<\del$ and $|t|+ d_g(z; (x,y))\leq \log (\frac{e^{s_0}}{x}).$ In particular, if $s^*<s_2$ is such that coordinates $(x,y)$ for which \eqref{metric} holds for $x <e^{s^*},$ then  
\begin{gather}
\begin{gathered}
V_+(x,s,y)=0 \text{ if }  |y-p|<\del, \;\   0<x < e^{s^*}, \text{ and }  \log x< s < s_2.
\end{gathered}\label{setv+}
\end{gather}
 Figure \ref{fig4} illustrates the result.
 \end{prop}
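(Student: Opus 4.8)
The plan is to reformulate the hypothesis as a statement about the solution $u$ of the wave equation near $\p X$ and then invoke finite speed of propagation. First, since the Cauchy data of $u$ are $(0,f)$ and the operator $D_t^2-\Delta_g+\tfrac{n^2}{2}$ is invariant under $t\mapsto-t$, uniqueness forces $u(-t,z)=-u(t,z)$, so $u$ is odd in $t$. Writing $t=s-\log x$ in $V_+(x,s,y)=x^{-\novt}u(s-\log x,x,y)$, the hypothesis becomes $u(t,x,y)=0$ for $(x,y)\in T:=\{0<x<\gamma,\ |y-p|<\del\}$ and $0<t<s_2-\log x$; by oddness,
\[
u(t,x,y)=0\quad\text{on }N:=\{(t,x,y):(x,y)\in T,\ |t|<s_2-\log x\}.
\]
Since $u$ vanishes on a two–sided $t$–interval at each point of $T\cap\{x<e^{s_2}\}$, its Cauchy data force $f=0$ there. (We also record that the proof of Proposition \ref{step-1} furnishes, in the compactified variables $(\mu,\nu,y)$, a solution $\wtp W=0$ with $W=0$ on $\{\mu\le0\}\cup\{\nu\le0\}$ near $p$; this will serve as a reservoir of vanishing.)

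The geometric content is that $u$ vanishes on the domain of influence of the region where it is already known to vanish. In the coordinates \eqref{metric} the Lorentzian metric is $G_L=dt^2-\tfrac{dx^2}{x^2}-\tfrac{h}{x^2}$, the surfaces $\{t+\log x=\text{const}\}$ are characteristic, and $N$ is a null diamond whose time–width $2(s_2-\log x)$ tends to $+\infty$ as $x\downarrow0$. For each $(x_1,y_1)\in T$, each $c$ with $|c|<s_2-\log x_1$, and each geodesic ball with $B_g((x_1,y_1),r)\times(c-\eta,c+\eta)\subset N$, the functions $u$ and $\p_t u$ vanish on $B_g((x_1,y_1),r)$ at time $c$, and the classical domain–of–dependence theorem gives $u(t,z)=0$ whenever $\overline{B_g(z,|t-c|)}\subset B_g((x_1,y_1),r)$. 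Since $N$ is thin in $(x,y)$, a single application propagates the vanishing only a bounded distance; one iterates, progressively enlarging the region on which $u$ is known to vanish — using at each stage the unbounded time–extent of $N$ near $\p X$, where geodesic balls of arbitrarily large radius fit inside it, together with the reservoir — until this region contains every $(t,z)$ with $|t|+d_g(z,(x,y))\le s_2-\log x$ for some $(x,y)\in T$, which is the first assertion. Equivalently, this step is cleanest in the variables $(\mu,\nu,y)$ of Proposition \ref{step-1}, where it becomes a Goursat–type continuation for $\wtp=\p_\mu\p_\nu-\mu\nu\Delta_h+\dots$ from the characteristic hypersurfaces $\{\mu=0\}$, $\{\nu=0\}$ (on which $W=0$) together with the tube. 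I expect this to be the main obstacle: the known–vanishing set is thin in $(x,y)$, while the conclusion reaches points $z$ that can lie a distance $\sim e^{s_2}$ from $\p X$, so all of the reach must come from the unbounded time–extent of $N$ near the boundary, and one must keep track of the finite speed of propagation in $y$ (of size $\sim\sqrt{x}$, small near $\p X$) so that the propagation stays inside $T$ and does not shrink $\del$.

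Finally, the "in particular" statement follows from what has just been proved together with the fact, recalled before Theorem \ref{support-full}, that the curves $y=\text{const}$ are geodesics, so $d_g\big((x,y),(\alpha,y)\big)\le\log(x/\alpha)$ for $0<\alpha<x$. Given $(x,y)$ with $0<x<e^{s^*}$, $|y-p|<\del$ and $\log x<s<s_2$, put $t=s-\log x\in(0,s_2-\log x)$ and pick $\alpha$ with $0<\alpha<\min(\gamma,x)$; then $(\alpha,y)\in T$ and
\[
|t|+d_g\big((x,y),(\alpha,y)\big)\le t+\log x-\log\alpha<s_2-\log\alpha,
\]
so $(t,(x,y))$ lies in the set of the first assertion, whence $u(t,x,y)=0$, i.e. $V_+(x,s,y)=0$.
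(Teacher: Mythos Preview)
Your derivation that $u$ is odd in $t$ and that the hypothesis translates to $u=0$ on the tube $N=\{(t,x,y):(x,y)\in T,\ |t|<s_2-\log x\}$ is correct, and your proof of the ``in particular'' clause from the first assertion is fine (it is essentially the paper's argument with the roles of the base point and the target point reversed).

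The gap is in the first assertion. Domain of dependence goes the wrong way for what you need. If the Cauchy data at time $c$ vanish on a ball $B_g((x_0,y_0),r)$, the domain-of-dependence theorem gives $u(t,z)=0$ only when $\overline{B_g(z,|t-c|)}\subset B_g((x_0,y_0),r)$, which forces $z\in B_g((x_0,y_0),r)$. No matter how you choose $c$ and the ball, and no matter how many times you iterate, every spatial point $z$ you reach lies inside the region where you already knew the Cauchy data vanish---hence inside $T$. The conclusion of the proposition, however, requires $u(t,z)=0$ at points $z$ that can lie arbitrarily far from $T$ (indeed, as $x\to 0$ the radius $s_2-\log x$ is unbounded). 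Finite speed of propagation simply cannot manufacture this; your iteration never leaves $T$. The Goursat reformulation has the same problem: vanishing on pieces of $\{\mu=0\}$ and $\{\nu=0\}$ near $p$ only determines $W$ in the characteristic diamond they bound, which in the original variables is again contained in (a subset of) $N$.

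What is actually needed is unique continuation across space-like hypersurfaces, and the paper invokes exactly that: Tataru's theorem (see \cite{TAT}, also \cite{hormander-pa,robbianozuilly,tataru1}). Its content is that if $u$ solves the wave equation and vanishes in a neighborhood of a time-like segment $\{z_0\}\times(-T,T)$, then $u(t,z)=0$ whenever $|t|+d_g(z,z_0)<T$. This is the opposite direction from domain of dependence---it propagates vanishing \emph{outward} from a thin tube into the full double cone---and it is genuinely deep (it fails for general lower-order time-dependent perturbations). Applying it with $z_0=(x,y)\in T$ and $T=s_2-\log x$ gives the first assertion in one stroke; your argument needs to be replaced by this.
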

 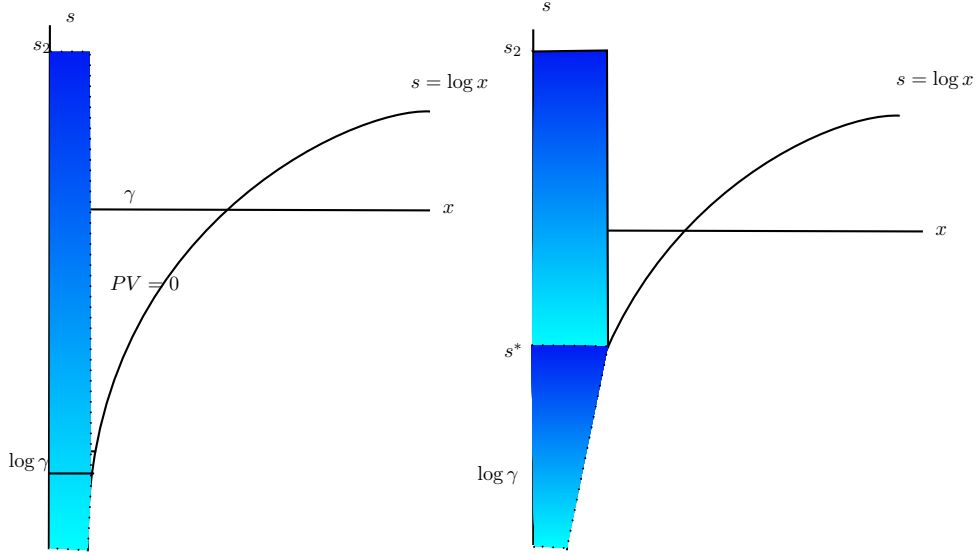
\begin{figure}[htb!]
% Generated with LaTeXDraw 2.0.8
% Thu Jul 04 08:54:47 EDT 2013
% \usepackage[usenames,dvipsnames]{pstricks}
% \usepackage{epsfig}
% \usepackage{pst-grad} % For gradients
% \usepackage{pst-plot} % For axes
\scalebox{.7} % Change this value to rescale the drawing.
{
\begin{pspicture}(0,-5.27)(19.1,5.29)
\psline[linewidth=0.04cm](1.88,1.25)(8.7,1.23)
\usefont{T1}{ptm}{m}{n}
\rput(9.05,1.255){$x$}
\usefont{T1}{ptm}{m}{n}
\rput(1.87,4.895){$s$}
\psbezier[linewidth=0.04](2.2,-5.25)(2.08,0.83)(7.22,3.19)(8.7,3.11)
\usefont{T1}{ptm}{m}{n}
\rput(3.28,-0.145){$PV=0$}
\psline[linewidth=0.04cm](18.06,0.83)(10.66,0.85)
\usefont{T1}{ptm}{m}{n}
\rput(18.41,0.835){$x$}
\usefont{T1}{ptm}{m}{n}
\rput(10.91,5.095){$s$}
\usefont{T1}{ptm}{m}{n}
\rput(9.07,3.655){$s=\log x$}
\usefont{T1}{ptm}{m}{n}
\rput(18.29,3.715){$s=\log x$}
\psbezier[linewidth=0.04](11.3,-5.23)(11.3,0.8534426)(16.14,3.11)(17.62,3.0302186)
\usefont{T1}{ptm}{m}{n}
\rput(1.33,4.335){$s_2$}
\usefont{T1}{ptm}{m}{n}
\rput(10.27,4.275){$s_2$}
\usefont{T1}{ptm}{m}{n}
\rput(3.0,1.495){$\gamma$}
\usefont{T1}{ptm}{m}{n}
\rput(1.09,-3.545){$\log \gamma$}
\usefont{T1}{ptm}{m}{n}
\rput(9.99,-3.805){$\log \gamma$}
\usefont{T1}{ptm}{m}{n}
\rput(10.28,-1.425){$s^*$}
\psline[linewidth=0.04cm](1.48,4.75)(1.46,-5.21)
\psline[linewidth=0.04cm](10.66,4.67)(10.66,-5.13)
\psline[linewidth=0.04,fillstyle=gradient,gradlines=2000,gradmidpoint=1.0](10.64,-1.41)(10.7,-1.37)(12.08,-1.37)(12.06,4.27)(10.66,4.25)(10.68,4.23)
\psline[linewidth=0.04cm](1.46,-3.35)(2.36,-3.35)
\psline[linewidth=0.04,linestyle=dotted,dotsep=0.16cm,fillstyle=gradient,gradlines=2000,gradmidpoint=1.0](1.48,4.25)(2.24,4.25)(2.26,-3.95)(2.2,-5.23)(1.46,-5.19)(1.48,-5.21)
\psline[linewidth=0.04cm](1.46,-3.77)(2.32,-3.77)
\psline[linewidth=0.04cm](10.68,-3.73)(11.44,-3.73)
\psline[linewidth=0.04,linestyle=dotted,dotsep=0.16cm,fillstyle=gradient,gradlines=2000,gradmidpoint=1.0](10.62,-1.33)(12.08,-1.35)(11.3,-5.19)(10.66,-5.15)(10.68,-5.17)
\end{pspicture} 
}
\caption{ If  $PV=0$ and $V=0$ in the colored region on the left, then $V=0$ in the colored region on the right.}
\label{fig4}
\end{figure}
\begin{proof}  The key point in the proof is the following consequence of  Tataru's theorem \cite{TAT}, see also \cite{tataru1,hormander1,robbianozuilly}.  If $u(t,z)$ is a $C^\infty$ function that satisfies
\begin{gather*}
\begin{gathered}
(D_t^2-\Delta_g +L(z,D_z))u=0 \text{ in } (\widetilde{T}, \widetilde{T}) \times \Omega, \\
u(t,z)=0 \text{ in a neighborhood of } \{z_0\} \times (-T,T), \;\ T<\widetilde{T},
\end{gathered}
\end{gather*}
where $\Omega\subset \mrn,$ $g$ is a $C^\infty$ Riemannian metric and $L$ is a first order $C^\infty$ operator (that does not depend on $t$), then
\begin{gather}
u(t,z)=0 \text{ if } |t|+ d_g(z,z_0)<T, \label{tatres}
\end{gather}
 where $d_g$ is the distance measured with respect to the metric $g.$  

Since the initial data of \eqref{wave} is $(0,f),$  $u(t,z)=-u(-t,z).$ If  $x<\gamma,$ $\log x<s<s_1,$ and $|y-p|<\del,$ 
 it follows from the definition of $V_+$ that
\begin{gather*}
u(t,x,y)=0 \text{ if }  0<x<\gamma, \;\  |y-p|<\del \text{ and } |t| \leq s_2-\log x= \log (\frac{e^{s_2}}{x}).
\end{gather*}

Applying \eqref{tatres} with $z_0=(x,y)$ we obtain
\begin{gather*}
u(t,z)=0 \text{ provided } |t|+d_g(z; (x,y))< \log (\frac{e^{s_2}}{x}), \;\ \text{ with } 0<x<\del, \; |y-p|<\del.
\end{gather*}

If $z=(\alpha,y)$ with $e^{s^*}>\alpha>x,$ $d_g((x,y); (\alpha,y))=  \log(\frac{\alpha}{x}),$  it follows from \eqref{tatres}
 \begin{gather*}
u(t,(\alpha,y))=0 \text{ if } t+\log(\frac{\alpha}{x})< \log(\frac{e^{s_2}}{x}). \label{applic-tat}
\end{gather*}
In particular this guarantees that
$u(t,\alpha,y)=0$ if $ 0<t< \log(\frac{e^{s_2}}{ \alpha}),$  and  since $s=t+\log \alpha,$ hence $V_+(\alpha,s,y)=0$ if $\alpha<e^{s^*},$ $s<s_2$ and $|y-p|<\del.$ This ends the proof of Proposition \ref{step-3}.
\end{proof}
We can now conclude the proof of Theorem \ref{support-local}. 
\begin{proof}
 We know from Proposition \ref{step-1} that  for any $p\in \Gamma$ there exists $\del>0$ such that 
\begin{gather*}
V_+(x,s,y)=0 \text{ if }  x< \del , \;\  s < \log \del, \;\ |y-p|<\del.
\end{gather*}
Moreover, $V(x,s,y)=0$ if $x<0,$ $s<s_0$ and $y\in \Gamma.$  Applying Proposition \ref{step-2} with $s_1=\log \del,$ we find  that  there exists $\beta_1<\del$ such that
\begin{gather*}
V_+(x,s,y)=0 \text{ provided }  x<\beta_1, \;\ |y-p|<\beta_1 \text{ and } \log x< s< \log \del+ \frac{1}{4}(s_0-\log \del).
\end{gather*}
Then Proposition \ref{step-3} guarantees that there exists $s^*<<0$ such that,
\begin{gather*}
V_+(x,s,y)=0 \text{ if } x<e^{s^*}, \;\ |y-p|<\beta_1, \;\ s<s_2=\log \del+ \frac{1}{4}(s_0-\log \del).
\end{gather*}

Again using  Proposition \ref{step-2}  and Proposition \ref{step-3} $n$ times, we find that there exists $\beta_n<\beta_{n-1}$ such that for 
$s_n=s_{n-1}+\frac{1}{3}(s_0-s_{n-1}),$ 
\begin{gather*}
V_+(x,s,y)=0 \text{ if } x<e^{s^*}, |y-p|<\beta_{n-1}, \;\ s_n= s_{n-1}+\oq(s_0-s_{n-1}).
\end{gather*}
The main point is that while the neighborhood of $p$ shrinks from one step to the next, the neighborhood of $x=0$ stays the same: $x<e^{s^*}.$
Since $p$ is arbitrary, it follows that, for every $p \in \Gamma,$ $V_+(x,s,p)=0$ provided $0< x< e^{s^*}$ and  $\log x<s <s_0.$  

We again resort to the consequence of Tataru's theorem to finish the proof of the result.  Let $w=(\alpha,p),$ with $0<\alpha<e^{s^*}$ and $p\in \Gamma,$ then the solution $u(t,z)$ vanishes in a neighborhood of $\{w\} \times \left(-\log(\frac{e^{s_0}}{\alpha}), \log(\frac{e^{s_0}}{\alpha})\right).$ Therefore, by  \eqref{tatres},
\begin{gather*}
u(t,z)=\p_tu(t,z)=0  \text{ if } |t|+ d_g(z,w)< \log(\frac{e^{s_0}}{\alpha}).
\end{gather*}
In particular, when $t=0$ we find that   $\p_tu(0,z)=f(z)=0$ provided $d_g(z,w)< \log(\frac{e^{s_0}}{\alpha}),$  and this concludes the proof of Theorem \ref{support-local}.
\end{proof}

The following result will be useful in the next section.
\begin{cor}\label{uniqueinf}   Let $\Gamma\subset \p X$ be open.  If $f\in L^2_{ac}(X)$ and $\mcr_+(0,f)(s,y)=0$ for every $s\in \mr$ and $y\in \Gamma,$ then $f=0.$ Similarly, if 
$(h,0)\in E_{ac}(X)$ and $\mcr_+(h,0)(s,y)=0$ for every $s\in \mr$ and $y\in \Gamma,$ then $h=0.$
\end{cor}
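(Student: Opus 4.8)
The plan is to derive the first assertion from Theorem~\ref{support-local} by letting $s_0\to+\infty$, and to reduce the second assertion to the first one by averaging the wave group over time.

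\emph{First assertion.} Suppose $\mcr_+(0,f)(s,y)=0$ for all $s\in\mr$ and $y\in\Gamma$. For any open $\Gamma'$ with $\overline{\Gamma'}\subset\Gamma$ and any $s_0\in\mr$, the hypothesis of Theorem~\ref{support-local} holds with $\Gamma'$ in place of $\Gamma$ (because $\overline{\Gamma'}\subset\Gamma$), so $f=0$ a.e. in $\mcd_{s_0}(\Gamma')$. I would then check that these sets cover $\intx$: given $z\in\intx$, pick $y'\in\Gamma$, a small open $\Gamma'\ni y'$ with $\overline{\Gamma'}\subset\Gamma$, and $\alpha>0$ small enough that $(\alpha,y')$ lies in the boundary normal coordinate patch \eqref{metric} over $\Gamma'$; then $d_g(z,(\alpha,y'))<\infty$ while $\log(e^{s_0}/\alpha)=s_0-\log\alpha\to+\infty$, so $z\in\mcd_{s_0}(\Gamma')$ for $s_0$ large. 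Since $\intx$ is a union of countably many such sets $\mcd_{s_0}(\Gamma')$ and $f$ vanishes a.e. on each, $f=0$ a.e., i.e. $f=0$ in $L^2(X)$. This also disposes of the apparent gap between ``$y\in\Gamma$'' in the corollary and ``$y\in\ogamma$'' in Theorem~\ref{support-local}, since $\Gamma$ is open.

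\emph{Second assertion.} Let $u$ solve \eqref{wave} with data $(h,0)$; by uniqueness of the $E_{ac}(X)$-valued solution, $u(-t,\cdot)=u(t,\cdot)$, so $U(t)(h,0)=(u(t),\p_t u(t))$ with $\p_t u$ odd in $t$. Let $A$ be the generator of the group $U$, so that $A(f_1,f_2)=(f_2,-Lf_1)$ where $L=\Delta_g-c$ is the spatial operator of \eqref{wave} and $c$ is the constant read off from \eqref{wave}. Fix an \emph{even} $\chi\in C_0^\infty(\mr)$ and set $v_\chi:=\int_\mr\chi(t)\,U(t)(h,0)\,dt$. By the usual smoothing property of $C_0$-groups, $v_\chi\in D(A)$; since the map $(f_1,f_2)\mapsto f_2$ is bounded on $E_{ac}(X)$ and $\chi$ is even while $\p_t u$ is odd in $t$, the second component of $v_\chi$ vanishes, i.e. $v_\chi=(h_\chi,0)$ for some $h_\chi\in L^2_{ac}(X)$. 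By \eqref{translrep}, $\mcr_+(v_\chi)(s,y)=\int_\mr\chi(t)\,\mcr_+(h,0)(s+t,y)\,dt=0$ for all $s\in\mr$, $y\in\Gamma$; differentiating \eqref{translrep} in $T$ at $T=0$ (legitimate since $v_\chi\in D(A)$) gives $\mcr_+(Av_\chi)=\p_s\mcr_+(v_\chi)$, which therefore also vanishes on $\mr\times\Gamma$. Now $Av_\chi=(0,-Lh_\chi)$ with $-Lh_\chi\in L^2_{ac}(X)$, so applying the first assertion to $-Lh_\chi$ yields $Lh_\chi=0$, i.e. $\Delta_g h_\chi=c\,h_\chi$ with $h_\chi\in L^2(X)$. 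By \eqref{l2decomp} the number $c$ is not an eigenvalue of $\Delta_g$, so $h_\chi=0$ and $v_\chi=0$. This holds for every even $\chi\in C_0^\infty(\mr)$; taking an even approximate identity $\chi_\eps$, the strong continuity of $U$ gives $v_{\chi_\eps}\to(h,0)$ in $E_{ac}(X)$, whence $(h,0)=0$ and $h=0$.

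\emph{Main difficulty.} The routine part is the first assertion; the real work is the second, because Theorem~\ref{support-local} only concerns radiation fields of data of the form $(0,f)$, whereas here one is handed data $(h,0)$. The device above converts one into the other: time-averaging keeps the datum inside $E_{ac}(X)$ (as $U(t)$ preserves $E_{ac}(X)$), evenness of the mollifier kills its first component (using the parity of $u$), applying the generator turns $(h_\chi,0)$ into $(0,-Lh_\chi)$, and strong continuity lets one pass to the limit $\chi_\eps\to\delta_0$ inside $E_{ac}(X)$ rather than merely in $L^2(X)$. The only extra input needed is that the constant $c$ from \eqref{wave}, in whatever normalization, lies outside $\sigma_{pp}(\Delta_g)$, which is exactly \eqref{l2decomp}.
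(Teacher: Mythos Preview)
Your proof is correct and follows essentially the same route as the paper's. For the first assertion both you and the paper exhaust $\intx$ by the sets $\mcd_{s_0}$ (you are a little more careful in passing to $\Gamma'\Subset\Gamma$ to match the closure in the hypothesis of Theorem~\ref{support-local}). For the second assertion the paper mollifies $F=\mcr_+(h,0)$ in $s$ (implicitly with an even test function, as in the proof of Theorem~\ref{support-local}) so as to assume $h\in\bigcap_k D((\Delta_g-\tfrac{n^2}{4})^k)$, then observes that $V=\p_t u$ solves \eqref{wave} with data $(0,(\Delta_g-\tfrac{n^2}{4})h)$ and that its radiation field is $\p_s\mcr_+(h,0)=0$ on $\mr\times\Gamma$, whence $(\Delta_g-\tfrac{n^2}{4})h=0$ and $h=0$ since $(h,0)\in E_{ac}(X)$. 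Your argument is the same idea phrased via the generator $A$ of $U(t)$: your even time-average is exactly the paper's mollification, your identity $\mcr_+(A v_\chi)=\p_s\mcr_+(v_\chi)$ is the differentiated translation representation, and your conclusion $Lh_\chi=0\Rightarrow h_\chi=0$ is the same spectral observation that the relevant constant lies outside $\sigma_{pp}(\Delta_g)$ by \eqref{l2decomp}. The only cosmetic difference is that the paper hides the approximation step behind ``we may assume'', whereas you carry it out explicitly with $\chi_\eps\to\delta_0$ in $E_{ac}(X)$.
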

\begin{proof}  If $\mcr(0,f)(s,y)=0$ for every $s\in \mr$ and $y\in \Gamma,$ then $f(z)=0$ if $z\in \mcd_{s_0}(\Gamma)$ for every $s_0.$  Since $X$ is connected, the distance between any two points in the interior of $X$ is finite.  Therefore $f=0.$  

Suppose $F=\mcr(h,0)(s,y)=0$ for every $s\in \mr$ and $y\in \Gamma.$   As in the proof of Theorem \ref{support-local}, by taking convolution of $F$ with  
$\phi \in C_0^\infty(\mr),$ we may assume that $(\Delta_g-\frac{n^2}{4})^k h\in L^2_{ac}(X)$ for every $k\geq 0.$  
Let $u(t,z)$ satisfy \eqref{wave} with initial data $(h,0)$ and let $V=\p_t u.$  Then $V$ satisfies \eqref{wave} with initial data $(0, (\Delta_g-\frac{n^2}{4})h)$ and
$\mcr_+\left( 0,(\Delta_g-\frac{n^2}{4})h\right)(s,y)=0$ for $s\in \mr$ and $y\in \Gamma.$ But as we have shown, this implies that
$(\Delta_g-\frac{n^2}{4})h=0.$ Since $(h,0)\in E_{ac}(X),$ this implies that $h=0.$
\end{proof}

One should remark that this result can be proved by applying a result of Mazzeo \cite{mazzeo2}, see also \cite{VasyWunsch}.  The solution to \eqref{wave} with initial data $(0,f)$ is odd,and since
 $\mcr_+(0,f)(s,y)=0,$ for $s\in \mr,$$y\in \Gamma,$  it follows that $\mcr_-(0,f)(s,y)=0$ for $s\in \mr,$ $y \in \Gamma.$   Taking Fourier transform in $s$ we find that 
 \begin{gather*}
 (\Delta_g-\la^2-\frac{n^2}{4}) \widehat{u}(\la,z)=0
 \end{gather*}
and using that $\mcr_+(0,f)(s,y)=\mcr_-(0,f)(s,y)=0,$ one deduces by using a formal power series argument as in the proof of Proposition 3.4 of \cite{GraZwo},  that $\widehat{u}(\la,z)$ vanishes to infinite order at $\Gamma.$ Theorem 14 of \cite{mazzeo2} implies that $\widehat{u}=0$ and hence $u=0.$ In particular $f=0.$

\section{The Control Space}

One of the key arguments used in \cite{sa}  to prove Theorem \ref{inverse-full} was that the ranges of the forward and backward radiation fields
\begin{gather*}
\mcm^{\pm}= \mcr_{\pm}(0,L^2_{ac}(X))= \{\mcr_{\pm}(0,f): f \in L^2_{ac}(X)\}
\end{gather*}
are closed subspaces of $L^2(\mr \times \p X),$ and are characterized by the scattering operator.  
  The main goal of this section is to define an analogue of $\mcm^{\pm}$ for  functions supported in $\mr \times \Gamma.$ Throughout the remaining of this paper we will denote
\begin{gather*}
L^2(\mr \times \Gamma)=\{ F\restr_{\mr \times \Gamma}: F \in L^2(\mr \times \p X )\},
\end{gather*}
 and $\mcs\restr_{\mr\times \Gamma}$ will denote the operator defined by
\begin{gather}
\begin{gathered}
\mcsg: L^2(\mr \times \Gamma) \longrightarrow L^2(\mr \times \Gamma) \\
F \longmapsto  (\mcs F)\restr_{\mr\times \Gamma}.
\end{gathered}\label{rest-mcs}
\end{gather}

Since we assume we know $\mca(\la)\restr_{\Gamma}$ for every $\la,$  then in view of \eqref{relation} we may assume we know $\mcsg.$ 
We shall prove
\begin{thm}\label{rangedet} Let $\Gamma\subset \p X$ be an open subset such that $\p X\setminus \Gamma$  does not have empty interior.  The space
\begin{gather*}
\mcm(\Gamma)^\pm=\{ \mcr_\pm(0,f)\restr_{\mr \times \Gamma}: f \in L^2_{ac}(X)\},
\end{gather*}
equipped with norm $\mcn$ defined by
\begin{gather}
\mcn\left(\mcr_\pm(0,f)\restr_{\mr \times \Gamma}\right)=||f||_{L^2(X)}. \label{defnorm}
\end{gather}
is a Hilbert space determined by $\mcsg.$
\end{thm}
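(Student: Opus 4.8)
The plan is to treat separately the two assertions of the theorem: that $\mcn$ is a well-defined norm making $\mcm(\Gamma)^{\pm}$ a Hilbert space, and that this normed space is recovered from $\mcsg$; the second is the substantial part. For the first, fix $f\in L^2_{ac}(X)$; then $(0,f)\in E_{ac}(X)$, $E(0,f)=\|f\|_{L^2(X)}^2$, and the unitarity \eqref{unitary} of $\mcr_{\pm}$ gives $\|\mcr_{\pm}(0,f)\|_{L^2(\mr\times\p X)}=\|f\|_{L^2(X)}$. By Corollary \ref{uniqueinf} the linear maps $\Phi_{\pm}\colon L^2_{ac}(X)\to L^2(\mr\times\Gamma)$, $\Phi_{\pm}f=\mcr_{\pm}(0,f)\restr_{\mr\times\Gamma}$, are injective, so \eqref{defnorm} unambiguously defines a norm $\mcn$ on $\mcm(\Gamma)^{\pm}=\Phi_{\pm}(L^2_{ac}(X))$ and $\Phi_{\pm}$ is by construction an isometric bijection from the Hilbert space $L^2_{ac}(X)$ onto $(\mcm(\Gamma)^{\pm},\mcn)$. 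Hence the latter is complete, its norm obeys the parallelogram law, and it is a Hilbert space with inner product $\langle\Phi_{\pm}f,\Phi_{\pm}h\rangle_{\mcn}=\langle f,h\rangle_{L^2(X)}$; note that $\mcm(\Gamma)^{\pm}$ need not be closed in $L^2(\mr\times\Gamma)$, as the $\mcn$-topology is strictly finer.

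It remains to show $\mcsg$ determines the subspace $\mcm(\Gamma)^{+}\subset L^2(\mr\times\Gamma)$ and the inner product $\langle\cdot,\cdot\rangle_{\mcn}$ on it; the case of $\mcm(\Gamma)^{-}$ then follows, because the reflection $J$, $JF(s,y)=F(-s,y)$, commutes with restriction to $\mr\times\Gamma$ and the oddness in $t$ of the solution of \eqref{wave} with data $(0,f)$ gives $\mcr_{-}(0,f)=J\mcr_{+}(0,f)$, so $\mcm(\Gamma)^{-}=J\mcm(\Gamma)^{+}$ with matching $\mcn$. On the \emph{full} boundary this is straightforward: unitarity of $\mcr_{\pm}$, the relations $\mcr_{-}(0,f)=J\mcr_{+}(0,f)$ and $\mcs\mcr_{-}(0,f)=\mcr_{+}(0,f)$ (see \eqref{scatmat2}), and bijectivity of $\mcr_{+}$ on $E_{ac}(X)$ give $\mcm^{+}=\ker(\Id-\mcs J)\subset L^2(\mr\times\p X)$, with $\mcn=\|\cdot\|_{L^2(\mr\times\p X)}$ there. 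But $\mcsg$ is only the compression $F\mapsto(\mcs F^{0})\restr_{\mr\times\Gamma}$ of $\mcs$, with $F^{0}$ the extension of $F$ by zero (see \eqref{rest-mcs}), and for $F=\Phi_{+}f$ one computes
\[
\mcsg\,J F \;=\; F\;-\;\bigl(\mcs\bigl(\mcr_{-}(0,f)\,\mathbf{1}_{\mr\times(\p X\setminus\Gamma)}\bigr)\bigr)\restr_{\mr\times\Gamma},
\]
so the eigenvalue equation is spoiled by a term recording the radiation of $f$ over the unknown part of the boundary. Recovering $\mcm(\Gamma)^{+}$ and $\langle\cdot,\cdot\rangle_{\mcn}$ in spite of this is the crux.

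I would do this by a Boundary-Control-type computation localized in the translation variable $s$, with the Local Support Theorem \ref{support-local} playing the role of finite speed of propagation. For $s_{0}\in\mr$ set $\mcm_{s_{0}}(\Gamma)^{+}=\{\mcr_{+}(0,f)\restr_{\{s\le s_{0}\}\times\Gamma}:f\in L^2_{ac}(X)\}$; by Theorem \ref{support-local} this depends on $f$ only through $f\restr_{\mcd_{s_{0}}(\Gamma)}$, with $\mcd_{s_{0}}(\Gamma)$ as in \eqref{mcdnew}, and the induced map $L^2(\mcd_{s_{0}}(\Gamma))\to\mcm_{s_{0}}(\Gamma)^{+}$ is injective. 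First I would use energy conservation, the translation-representation identity \eqref{translrep}, the time-reversal relation $\mcr_{-}(0,f)=J\mcr_{+}(0,f)$ and the energy estimate \eqref{enerest} to express $\langle f,h\rangle_{L^2(\mcd_{s_{0}}(\Gamma))}$, for $f,h$ supported in $\mcd_{s_{0}}(\Gamma)$, as an explicit bilinear quantity in $\mcr_{\pm}(0,f)\restr_{\{s\le s_{0}\}\times\Gamma}$, $\mcr_{\pm}(0,h)\restr_{\{s\le s_{0}\}\times\Gamma}$ and $\mcsg$ — the spoiling term being absent, or after a finite iteration re-expressible through $\mcsg$, since over the relevant range of $s$ the support theorem controls where waves issued from $\mcd_{s_{0}}(\Gamma)$ can radiate; the hypothesis that $\p X\setminus\Gamma$ has nonempty interior is what provides a genuine margin in which this control is effective. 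Then, having recovered $\langle\cdot,\cdot\rangle_{L^2(\mcd_{s_{0}}(\Gamma))}$ on $\mcm_{s_{0}}(\Gamma)^{+}$ for every $s_{0}$, I would let $s_{0}\to+\infty$: since $X$ is connected the sets $\mcd_{s_{0}}(\Gamma)$ exhaust $X$ (as in the proof of Corollary \ref{uniqueinf}), the limiting form is $\langle\cdot,\cdot\rangle_{L^2(X)}$, which recovers $\langle\cdot,\cdot\rangle_{\mcn}$; and $\mcm(\Gamma)^{+}$ emerges from $\mcsg$ as the $\mcn$-completion of the directed union of the $\mcm_{s_{0}}(\Gamma)^{+}$, i.e. the set of $F\in L^2(\mr\times\Gamma)$ whose truncations $F\restr_{\{s\le s_{0}\}\times\Gamma}$ lie in the ranges of these compatible, $\mcsg$-built maps.

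The hard part is the first step: closing the Blagove\v{s}\v{c}enskii-type identity using only the compressed operator $\mcsg$ rather than $\mcs$. Two difficulties, both already encountered in the proof of Theorem \ref{support-local}, have to be dealt with — the fibres over a fixed $s$ are noncompact, so the cutoffs needed to isolate patterns supported over $\Gamma$ generate error terms that must be absorbed by convexifying supports and invoking \eqref{enerest}; and the reflection $J$, interchanging $s$ and $-s$, does not preserve the one-sided truncation $\{s\le s_{0}\}$, so its interplay with the localization must be tracked carefully. Once this is in place the limit $s_{0}\to+\infty$ and the extraction of $\mcm(\Gamma)^{+}$ are routine.
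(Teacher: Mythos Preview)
Your treatment of the Hilbert-space structure is correct and essentially matches the paper. The problem is the determination step: you correctly identify the obstacle (the ``spoiling term'' coming from radiation over $\p X\setminus\Gamma$) but never actually overcome it. The Blagove\v{s}\v{c}enskii-type identity you promise for $\langle f,h\rangle_{L^2(\mcd_{s_0}(\Gamma))}$ is never written down, and the tools you invoke for it---the energy estimate \eqref{enerest}, ``convexifying supports''---are devices from the \emph{proof} of the local support theorem, not mechanisms for recovering $L^2$ inner products from the compressed scattering operator. There is no evidence such an identity can be closed at the level of $\mcsg$ alone for data supported in $\mcd_{s_0}(\Gamma)$; indeed, the paper's Section~4, where projectors of the kind you describe \emph{are} used, only gets started \emph{after} the present theorem is in hand. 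As written, the crucial step is an assertion, not an argument.

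The paper's route is both simpler and conceptually different. Instead of restricting to data of the form $(0,f)$ and fighting the spoiling term, it exploits the full bijection $\mcr_+:E_{ac}(X)\to L^2(\mr\times\p X)$: every $F\in L^2(\mr\times\Gamma)$, extended by zero, is $\mcr_+(f,h)$ for some $(f,h)\in E_{ac}(X)$, and time-reversal gives $\tfrac12(F+\mcs F^*)=\mcr_+(0,h)$, so the map $\mcl F=\tfrac12(F+\mcs F^*)\restr_{\mr\times\Gamma}$ lands in $\mcm(\Gamma)^+$. Because $F$ is \emph{supported} in $\mr\times\overline\Gamma$, a direct polarization gives
\[
\|h\|_{L^2(X)}^2=\tfrac12\|F\|_{L^2(\mr\times\Gamma)}^2+\tfrac12\langle F,\,F+\mcsg F^*\rangle_{L^2(\mr\times\Gamma)},
\]
which uses only $\mcsg$. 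One then checks, via Corollary~\ref{uniqueinf} and the hypothesis that $\p X\setminus\Gamma$ has nonempty interior, that $\mcl$ is injective with dense range and that the set of such $h$'s is dense in $L^2_{ac}(X)$; hence $(\mcm(\Gamma)^+,\mcn)$ is recovered as the completion of $\mcl(L^2(\mr\times\Gamma))$ in the norm just displayed. The idea you missed is that allowing a nonzero first component $f$ lets one arrange the \emph{full} radiation field to be supported over $\Gamma$, which eliminates the spoiling term outright rather than trying to control it.
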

\begin{proof}  We shall work with the forward radiation field.  The proof of the result for $\mcr_-$ is identical. Since $\mcr_+$ is linear, the triangle inequality for the $L^2(X)$-norm implies that $\mcn$ is a norm, and that
\begin{gather*}
\lan \mcr_+(0,f)\restr_{\mr\times \Gamma},\mcr_+(0,h)\restr_{\mr\times \Gamma} \ran_{\mcn}= \lan f,h\ran_{L^2(X)}
\end{gather*}
is an inner product.  Since $\mcr_+$ is continuous and $L_{ac}^2(X)$ is complete,  it follows that $(\mcm(\Gamma),\mcn)$ is a Hilbert space.  We need to show that it is determined by $\mcsg,$  and we begin by observing, as in \cite{sa}, that  the symmetry of the wave equation under time reversal gives that
\begin{gather}
\begin{gathered}
\mcr_+(f_1,f_2)(-s,y)=\mcr_+(f_1,0)(-s,y)+\mcr_+(0,f_2)(-s,y)= -\mcr_-(f_1,0)(s,y)+\mcr_-(0,f_2)(s,y).
\end{gathered}\label{symmetry1}
\end{gather}
In particular, if $F(s,y)=\mcr_+(f_1,f_2)(s,y),$  and if we denote $F^*(s,y)=F(-s,y),$
 then applying $\mcs$ to the second equality of \eqref{symmetry1}, we obtain
\begin{gather}
\mcs F^*(s,y)= -\mcr_+(f_1,0)+ \mcr_+(0,f_2)=\mcr_+(-f_1,f_2), \label{symmetry2}
\end{gather}
and hence we deduce that
\begin{gather}
\begin{gathered}
\text{ if } F(s,y)=\mcr_+(f_1,f_2), \text{ then } \ha(F+\mcs F^*)=\mcr_+(0,f_2), \text{ and } \ha(F-\mcs F^*)=\mcr_+(f_1,0),
\end{gathered}\label{symmetry3}
\end{gather}
and therefore,
\begin{gather}
\begin{gathered}
F(s,y)=\mcr_+(0,f_2), \text{ if and only }  F=\mcs F^*,\\
 F(s,y)=\mcr_+(f_1,0), \text{ if and only if }  F=-\mcs F^*.
\end{gathered}\label{symmetry3A}
\end{gather}

   Since $\mcr_+$ is an isometry, then for any $F \in L^2(\mr \times \Gamma)$ there exists $(f,h) \in E_{ac}(X)$ such that $\mcr(f,h)=F.$ But in view of  the first identity in \eqref{symmetry3} we have 
\begin{gather}
\ha(F+\mcs F^*)|_{\mr\times \Gamma}= \mcr_+(0,h)|_{\mr \times \Gamma}. \label{mcl1}
\end{gather}
We shall denote
\begin{gather}
\begin{gathered}
\mcl : L^2(\mr \times \Gamma) \longrightarrow L^2(\mr \times \Gamma) \\
F \longmapsto \ha(F+\mcs F^*)|_{\mr \times \Gamma}.
\end{gathered}\label{mapl}
\end{gather}
Since $\mcs$ is unitary, it follows that $||\mcl|| \leq 1.$  In fact, the map $\mcl$ gives more information:
\begin{lemma}\label{l2norm} If $F=\mcr_+(f,h)$ is supported in $\mr \times \overline{\Gamma},$ then $||h||_{L^2(X)}$ is determined by $\mcs\restr_{\mr \times \Gamma}.$
\end{lemma}
\begin{proof}  If $F(s,y)=\mcr_+(f,h)\in L^2(\mr \times \Gamma),$ since $\mcr_+$ is unitary, then according to \eqref{symmetry3}
\begin{gather*}
\lan F, (F + (\mcs F^*)\restr_{\mr \times \Gamma})\ran= 
\lan F, (F + \mcs F^*)\ran= \lan \mcr_+(f,h), \mcr_+(-f,h)\ran= \\ ||h||_{L^2(X)}^2-(||d_g f||_{L^2(X)}^2-\frac{n^2}{4}||f||_{L^2(X)}^2)
\end{gather*}
On the other hand, $||F||_{L^2(\mr\times \Gamma)}= ||h||_{L^2(X)}^2+(||d_g f||_{L^2(X)}^2-\frac{n^2}{4}||f||_{L^2(X)}^2),$ therefore
\begin{gather*}
\ha ||F||^2+\ha \lan F,(F + (\mcs F^*)\restr_{\mr \times \Gamma})\ran= ||h||^2.
\end{gather*}
\end{proof}

We know from \eqref{unitary} that given $F\in L^2(\mr\times \Gamma)$ there exists $(f,h)\in E_{ac}(X)$ such that $\mcr_+(f,h)=F.$  We can say the following about such initial data 

\begin{prop}\label{ah} Let $\Gamma \subset \p X$ be an open subset such that $\p X\setminus \Gamma$ contains an open set $\mco,$ and $h\in L_{ac}^2(X).$ Then there exists at most one $f$ such that
$(f,0) \in E_{ac}(X)$ and $\mcr_+(f,h)$ is supported in $\mr \times \Gamma.$  Moreover, the set
\begin{gather*}
\mcc(\Gamma)=\{ h \in L^2_{ac}(X): \text{ there exists }  (f,0) \in E_{ac}(X) \text{ such that }  \mcr_+(f,h)(s,y)=0, \; y\in \p X \setminus \Gamma\}
\end{gather*}
 is dense in $L^2_{ac}(X).$ 
\end{prop}
\begin{proof}  First, if $\mcr_+(f_1,h)$ and $\mcr_+(f_2,h)$ are supported in $\mr \times \overline{\Gamma},$ then $\mcr_+(f_1-f_2,0)$ is supported in 
$\mr \times \overline{\Gamma},$ but this implies that $\mcr_+(f_1-f_2,0)=0$ in $\mr \times \mco,$ and so Corollarry \ref{uniqueinf} implies that $f_1=f_2.$ 

Let $v\in L^2_{ac}(X)$ and assume that $\lan v,h\ran_{L^2(X)}=0$ for all $h \in \mcc(\Gamma).$ Since $\mcr_+$ is unitary, then for all
$(f,0)\in E_{ac}(X),$
\begin{gather*}
\lan v,h\ran_{L^2(X)}=\lan \mcr_+(0,v), \mcr_+(f,h)\ran_{L^2(\mr \times \p X)}
\end{gather*}
Since $h\in \mcc(\Gamma),$ is arbitrary, it follows that
\begin{gather*}
\lan \mcr_+(0,v),F\ran_{L^2(\mr \times \p X)}=0 \text{ for all } F \in L^2(\mr \times \Gamma).
\end{gather*}
Hence $\mcr_+(0,v)=0$ on $\mr \times \Gamma$ and by Corollary \ref{uniqueinf}, $v=0.$
\end{proof}

\begin{lemma}\label{injective} If $\Gamma \subset \p X$ is open and $\p X\setminus \Gamma$ contains an open subset, then the map $\mcl$ is injective and has dense range.
\end{lemma}
\begin{proof} If $F=\mcr_+(f,h),$ is supported in $\mr \times \ogamma,$ then $\mcl F=\mcr_+(0,h)|_{\mr \times \Gamma}.$   If $\mcl F=0$ then  
$\mcr_+(0,h)=0$ on $\mr \times \Gamma.$  It follows from Corollary \ref{uniqueinf}  that $h=0,$ and hence $F=\mcr(f,0).$  Since there exists an open subset $\mco \subset (\p X\setminus \Gamma),$ and $F$ is supported in $\mr \times \overline{\Gamma},$ it follows that $F=\mcr_+(f,0)=0$ in $\mr\times \mco,$ and again by Corollary \ref{uniqueinf}, $f =0$ and so $F=0.$
  
  Now we prove that its range is dense.  Let $H\in L^2(\mr \times \Gamma)$ be orthogonal to the range of $\mcl.$ In this case, since $H$ is supported in $\mr\times \overline{\Gamma},$ then  for every $F\in L^2(\mr \times \Gamma),$
\begin{gather*}
0=\lan H, (F+\mcs F^*)|_{\mr\times \Gamma}\ran_{L^2(\mr\times \Gamma)}=\lan H, F+\mcs F^*\ran_{L^2(\mr\times \Gamma)}.
\end{gather*}
 Since $\mcs$ is unitary its adjoint is  its inverse $\mcs^{-1}.$ Hence
\begin{gather*}
0=\lan H, F+\mcs F^*\ran_{L^2(\mr\times \Gamma)}=\lan H, F\ran_{L^2(\mr\times \Gamma)}+\lan \mcs^{-1} H, F^*\ran_{L^2(\mr\times \Gamma)}= \\
\lan H, F\ran+\lan (\mcs^{-1} H)^*, F\ran_{L^2(\mr\times \Gamma)}= \lan H+(\mcs^{-1} H)^*, F\ran_{L^2(\mr\times \Gamma)}
\end{gather*}
  Therefore  $H+(\mcs^{-1} H)^*|_{\mr\times \Gamma}=0.$   This implies that $H^*+(\mcs^{-1} H)|_{\mr\times \Gamma}=0.$   If
  $H=\mcr_+(f_1,f_2),$ then by arguing as above, we see that $H^*+\mcs^{-1} H=2\mcr_-(0,f_2).$  By Corollary \ref{uniqueinf}, $f_2=0.$  But since $H$ is supported in $\Gamma$ and $\p X \setminus \Gamma$ contains an open subset,  we deduce again from Corollary \ref{uniqueinf} that $f_1=0,$ and thus $H=0.$ 
\end{proof}

We shall denote
\begin{gather}
\mcf(\Gamma)=\mcl(\lcg)=\{ \mcr_+(0,f)\restr_{\mr \times \Gamma}: f \in \mcc(\Gamma)\},  \label{range}
\end{gather}
and equip $\mcf(\Gamma)$ with the norm given by Lemma \ref{l2norm}
\begin{gather*}
\mcn(\mcr_+(0,f))=||f||_{L^2(X)}.
\end{gather*}
$(\mcf(\Gamma),\mcn)$ is a normed vector space, and since $\mcc(\Gamma)$ is dense in $L^2(X),$ $\mcf(\Gamma)$ is dense in $(\mcm(\Gamma),\mcn).$ 
Hence $(\mcm(\Gamma), \mcn)$ is the completion of $(\mcf(\Gamma),\mcn)$ into a Hilbert space, and therefore it is determined by $\mcsg.$   Notice that the completion of $\mcf(\Gamma)$ with the $L^2(\mr\times \Gamma)$-norm is $L^2(\mr\times \Gamma).$ But
\begin{gather*}
||\mcr_+(0,h)\restr_{(\mr\times \Gamma)}||_{L^2(\mr \times \Gamma)} \leq ||h||_{L^2(X)},
\end{gather*}
and hence, $\mcn$ is a stronger norm and $(\mcm(\Gamma),\mcn)$ is a smaller space.  This ends the proof of Theorem \ref{rangedet}.
\end{proof}

\section{Proof of Theorem \ref{INV}}

 As in \cite{sa} and \cite{colin3}, first we construct an isometry between neighborhoods of $\Gamma$ which realizes \eqref{diffeo}.  Then we apply the result of Kurylev and Lassas \cite{KuLa} on the reconstruction of a Riemannian manifold from the Dirichlet-to-Neumann  on part of the boundary to show that the diffeomorphism between neighborhoods of $\Gamma$  can be extended to an isometry between the two manifolds.   

First recall that the maps given by \eqref{metric}
\begin{gather*}
\Psi_{j,\eps}:  [0,\eps) \times \p X_j   \longrightarrow  U_{\eps,j} \subset X_j \\
(x_j,y) \mapsto z 
\end{gather*}
put the metrics $g_j$ in the form
\begin{gather*}
\Psi_j^* g_j= \frac{dx_j^2}{x_j^2}+ \frac{h_j(x_j,y,dy)}{x_j^2}.
\end{gather*}
As shown in \cite{Gra}, and discussed in section \ref{intro},  $\Psi_{j,\eps}(x_j,y)$ is the point obtained by flowing the integral curve of $\nabla_{x_j^2 g_j} x_j$ emanating from the point $y\in \p X_j$ by $x_j$ units of time.  So $x_j$ is the arc-length along the geodesics of $x_j^2 g_j$ normal to $\p X_j.$   We can pick $\eps$ small such that both maps are diffeomorphisms. For $z=(x,y) \in \Gamma\times [0,\eps),$ $x(z)$ is the distance from $z$  to $\Gamma$ with respect to the metric $x_j^2 g_j.$  In particular, when $y\in \Gamma,$
\begin{gather}
\Psi_j^*g_j= \frac{dx^2}{x^2}+ \frac{h_j(x,y,dy)}{x^2}, \;\ y\in \Gamma \label{fomet}
\end{gather}
Our first result will be
\begin{prop}\label{propo}
Let $(X_1,g_1),$  $(X_2,g_2)$ and $\Gamma$ satisfy the hypotheses of Theorem \eqref{INV}.  Let $\mcr_{j,\pm}(s,y,x',y')$ denote the kernels of the radiation fields acting on pairs of the form $(0,f),$ $f\in L_{ac}^2(X).$ Then there exists $\epsilon>0$ such that in the product decomposition $[0,\epsilon)\times \Gamma$ where \eqref{fomet} holds, $h_1(x,y,dy)=h_2(x,y,dy)$ and
\begin{gather}
\mcr_{1,\pm}(s,y,x',y')= \mcr_{2,\pm}(s,y,x',y'), \text{ if } y,y'\in \Gamma, \;\ x'<\eps.\label{equalradf}
\end{gather}
\end{prop}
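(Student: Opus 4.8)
The plan is to combine the pseudodifferential structure of the scattering matrix, which controls the Taylor expansion of $h_j$ at $\p X_j$, with a unique continuation argument upgrading this to honest equality on a collar, and then to read off the equality of the radiation kernels from the equality of the metrics by finite speed of propagation. First I would extract what Theorem~\ref{JoSa2} gives. Since $\mca_{1,\Gamma}(\la)=\mca_{2,\Gamma}(\la)$ for every $\la\neq 0$ and $\mca_j(\la)$ is pseudodifferential of order $2i\la$ on $\p X_j$, their full symbols on $T^*\Gamma\setminus 0$ coincide; matching principal symbols at all energies forces $h_{0,1}=h_{0,2}$ on $\Gamma$, so the boundary normal coordinate maps of \eqref{metric} identify a collar of $\Gamma$ in $X_1$ with one in $X_2$, and applying Theorem~\ref{JoSa2} at a suitable $\la$ shows that, in this common collar, $h_1-h_2$ vanishes to infinite order at $\{x=0\}\cap\Gamma$.

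The core step is to pass from infinite order vanishing to genuine equality near $\Gamma$, using the stationary picture \eqref{transE1}--\eqref{transE2}. Fix $\la\in\mr\setminus\{0\}$, take $\phi\in C_0^\infty(\Gamma)$, and let $u_j$ be the associated generalized eigenfunction on $X_j$, so that $u_j=x^{\novt+i\la}F_{+,j}+x^{\novt-i\la}F_{-,j}$ with $F_{+,j}|_{\p X_j}=\phi$ and $F_{-,j}|_{\p X_j}=\mca_j(\la)\phi$. On $\Gamma$ both the Dirichlet data $\phi$ and the scattering data $\mca_{1,\Gamma}(\la)\phi=\mca_{2,\Gamma}(\la)\phi$ agree, and the recursion producing the formal solution of $(\Delta_{g_j}-\nsq-\la^2)u=0$ near $x=0$ expresses the Taylor coefficients of $F_{\pm,j}$ at $x=0$ in terms of $\phi$, $\mca_{j,\Gamma}(\la)\phi$ and the jets of $h_j$ at $x=0$; since all of these coincide on $\Gamma$, the difference $w:=u_1-u_2$, read in the common collar, vanishes to infinite order at $\{x=0\}\cap\Gamma$. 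Moreover $(\Delta_{g_1}-\nsq-\la^2)w=(\Delta_{g_2}-\Delta_{g_1})u_2$, and the right hand side is a second order operator whose coefficients vanish to infinite order at $\{x=0\}\cap\Gamma$ applied to $u_2$, hence it too vanishes to infinite order there. Mazzeo's unique continuation at infinity (Theorem 14 of \cite{mazzeo2}, in the version allowing an inhomogeneity that vanishes to infinite order at $\Gamma$) then yields $w\equiv 0$ in a neighborhood of $\Gamma$. Letting $\phi$ run over $C_0^\infty(\Gamma)$, the solutions $u_1$ so produced are, by boundary unique continuation and a Runge-type approximation, dense in $C^2_{\loc}$ among local solutions of $(\Delta_{g_1}-\nsq-\la^2)u=0$ on the collar; since a second order operator annihilating all local solutions of an elliptic equation with nonzero spectral parameter must vanish (its symbol is forced to be a multiple of that of $\Delta_{g_1}-\nsq-\la^2$, whose zeroth order part is nonzero), we get $\Delta_{g_1}=\Delta_{g_2}$, i.e.\ $h_1=h_2$, on a collar $[0,\eps)\times\Gamma$ after shrinking $\eps$. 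A dynamical variant would also work, using that $\mcsg$ determines $\mcm(\Gamma)^\pm$ by Theorem~\ref{rangedet}, together with the support theorem, to see that the induced transplantation of $L^2_{ac}(X_1)$ onto $L^2_{ac}(X_2)$ is local near $\Gamma$ and intertwines the wave dynamics.

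It then remains to obtain \eqref{equalradf}. With $g_1=g_2$ on a collar $U_\eps$ of $\Gamma$, fix $x'<\eps$ and $y'\in\Gamma$, let $u_j$ solve \eqref{wave} on $X_j$ with data $(0,\delta_{(x',y')})$, and recall that $\mcr_{j,+}(s,y,x',y')=D_s V_{+,j}(0,s,y)$ for $V_{+,j}(x,s,y)=x^{-\novt}u_j(s-\log x,x,y)$. Both $V_{+,j}$ satisfy $PV=0$ on $[0,\eps)\times\mr\times\Gamma$ with the common operator $P$ of \eqref{operatorP} and the common Cauchy data ($V=0$ and $D_sV=x^{-\novt}\delta_{(x',y')}$) on the non-characteristic surface $\{s=\log x\}=\{t=0\}$; by finite speed of propagation they agree wherever backward light cones stay inside $U_\eps$, and since the $g_j$-geodesics joining $(x',y')$ to a boundary point of $\Gamma$ stay in $U_\eps$ once $\eps$ is small, Theorem~\ref{support-local} and this domain of dependence argument (supplemented, where needed, by the H\"ormander--Tataru unique continuation used in Propositions~\ref{step-2} and \ref{step-3}) give $V_{+,1}=V_{+,2}$ on the relevant region; taking traces at $x=0$ yields $\mcr_{1,+}(s,y,x',y')=\mcr_{2,+}(s,y,x',y')$ for $y\in\Gamma$, and the identity for $\mcr_-$ follows from the time reversal symmetry \eqref{symmetry1}. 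I expect the main obstacle to be the core step: controlling the inhomogeneous term in the unique continuation at infinity and checking that the generalized eigenfunctions span enough to recover $\Delta_{g_j}$ --- this is exactly where knowing $\mca_\Gamma(\la)$ at every energy, rather than at a single one, is used.
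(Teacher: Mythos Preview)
Your route is genuinely different from the paper's, and the core step has a real gap.

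The paper does \emph{not} pass through Theorem~\ref{JoSa2} and unique continuation at infinity. Instead it runs the Boundary Control Method directly on the radiation fields: using Theorem~\ref{rangedet} and the support theorem it builds, from $\mcsg$ alone, orthogonal projectors $\mcp_{x_1}^\pm$ on $\mcm^\pm(\Gamma)$ which on the initial-data side act as multiplication by the characteristic function of $X\setminus\mcd_{\log x_1}(\Gamma)$ (up to a finite-rank correction when there are eigenvalues). The leading conormal singularity of $\mcr_+\mcr_-^{-1}\mcp_{x_1}^-F$ at $s=\log x_1$ then reads off $x_1^{-n/2}f(x_1,y)|h|^{1/4}(x_1,y)/|h|^{1/4}(0,y)$ for $y\in\Gamma$, and since this expression is determined by $\mcsg$, one gets $|h_1|^{1/4}\mcr_{1,-}^{-1}F=|h_2|^{1/4}\mcr_{2,-}^{-1}F$ pointwise on the collar. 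Applying the same identity to $\p_s^2 F$ and using $\mcr_-^{-1}\p_s^2=(\Delta_g-\nsq)\mcr_-^{-1}$ forces the differential operators $\Delta_{g_1}$ and $\Delta_{g_2}$ to agree on $C^\infty((0,\eps)\times\Gamma)$, hence $h_1=h_2$ there. Equality of the kernels then follows by applying the machinery of the proof of Theorem~\ref{support-local} to $v_1-v_2$.

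Your core step, by contrast, asks Mazzeo's unique continuation to do something it is not set up to do. You form $w=u_1-u_2$ on the collar and note that $(\Delta_{g_1}-\nsq-\la^2)w=(\Delta_{g_2}-\Delta_{g_1})u_2$ with right side $O(x^\infty)$ at $\Gamma$, and that $w=O(x^\infty)$ there. But Mazzeo's theorem is a \emph{global} statement for the \emph{homogeneous} equation on an AH manifold; here $w$ is only defined on a collar (there is no natural extension of $u_2$ to $X_1$), and the equation is inhomogeneous. There is no off-the-shelf ``local inhomogeneous'' version; obtaining one would require reproving the Carleman estimates in this setting, which you have not indicated how to do. This is precisely the nonlinearity you flag at the end, and it is not resolved by knowing $\mca_\Gamma(\la)$ at all energies: that only feeds into the $O(x^\infty)$ matching, not into the continuation itself. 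The subsequent density claim (Runge approximation on a collar of an AH manifold) is also not standard and would need its own argument.

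Finally, your domain-of-dependence argument for \eqref{equalradf} is not correct as stated: $\mcr_+(0,\delta_{(x',y')})(s,y)$ involves $u_j(s-\log x,x,y)$ with $t=s-\log x\to\infty$ as $x\downarrow 0$, so its backward dependence cone is \emph{not} confined to the collar, and geodesics from $(x',y')$ to boundary points of $\Gamma$ certainly need not stay in $U_\eps$. The paper handles this step differently: once $\Delta_{g_1}=\Delta_{g_2}$ on the collar, one takes $F\in\mcm^+(\Gamma)$, sets $f_j=\mcr_{j,+}^{-1}F$, and applies the unique continuation apparatus of Theorem~\ref{support-local} to $v_1-v_2$ (same operator $P$, same radiation trace on $\mr\times\Gamma$) to get $f_1=f_2$ on $(0,\eps)\times\Gamma$; this is a unique-continuation argument, not finite speed of propagation.
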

\begin{proof}
The proof of Proposition \ref{propo} is an adaptation of the Boundary Control Method to this setting. As in \cite{sa}, pick $x_1<\eps,$ and consider the spaces
\begin{gather*}
\mcm_{x_1}^+(\Gamma)= \{ F \in \mcm^+(\Gamma): F(s,y)=0, s \leq \log x_1 \}, \\
\mcm_{x_1}^-(\Gamma)= \{ F \in \mcm^-(\Gamma): F(s,y)=0, s \geq - \log x_1 \},
\end{gather*}
and let
\begin{gather}
\begin{gathered}
\mcp_{x_1}^+: \mcm^+(\Gamma) \longrightarrow \mcm_{x_1}^+(\Gamma), \text{ and } \\
\mcp_{x_1}^-: \mcm^-(\Gamma) \longrightarrow \mcm_{x_1}^-(\Gamma)
\end{gathered}\label{projectors}
\end{gather}
denote the orthogonal projections with respect to the norm $\mcn$ defined in \eqref{defnorm}. Since $\mcm^{\pm}(\Gamma)$ and $\mcm_{x_1}^{\pm}(\Gamma)$ are determined  by $\mcsg,$ the projections $\mcp_{x_1}^{\pm}$ are also determined by $\mcsg.$

In view of finite speed of propagation and Theorem \ref{support-local}
\begin{gather*}
\mcm_{x_1}^+(\Gamma)= \{ \mcr_+(0,h)\restr_{\mr\times \Gamma}: h \in L_{ac}^2(X), \; h(z)=0, \; z \in \mcd_{\log x_1}(\Gamma) \}, \\
\mcm_{x_1}^-(\Gamma)= \{ \mcr_-(0,h)\restr_{\mr\times \Gamma}: h \in L_{ac}^2(X), \; h(z)=0, \; z \in \mcd_{\log x_1}(\Gamma) \}.
\end{gather*}

As in \cite{sa}, the key to proving Proposition \ref{propo} is to understand the effect of the projectors $\mcp_{x_1}^\pm$ on the initial data.  First we deal with the case of no eigenvalues.  In this case, $L^2(X)=L^2_{ac}(X).$

\begin{lemma}\label{87}
Let $(X,g)$ be an asymptotic hyperbolic manifold such that $\Delta_{g}$ has no eigenvalues. Let $x$ be such that \eqref{metric} holds in $(0,\epsilon)\times\partial X$. For $x_1\in (0,\epsilon)$, let $\mathcal{P}_{x_1}^{-}$ denote the orthogonal projector defined in \eqref{projectors}.
Let $\chi_{x_1}$ be the characteristic function of the set $X_{x_1}= X\setminus \mcd_{\log x_1}(\Gamma) $. Then for every $f\in L^2_{\text{ac}}(X)=L^2(X),$
\begin{gather*}
\mathcal{P}_{ x_1}^-(\mathcal{R}_{-}(0,f)\restr_{\mr \times \Gamma}) =\mathcal{R}_{-}(0,\chi_{x_1}f)\restr_{\mr \times \Gamma}.
\end{gather*}
\end{lemma}
\begin{proof}   Then since $\mcp_{x_1}^-$ is a projector, there exists
$f_{x_1}\in L^2(X)$ such that $\mathcal{P}_{ x_1}^-(\mathcal{R}_{-}(0,f)\restr_{\mr \times \Gamma}) =\mathcal{R}_{-}(0,f_{x_1})\restr_{\mr \times \Gamma},$ and
for every $h\in L^2(X)$ supported in $X_{x_1},$
\begin{gather*}
\lan \mcr_-(0, f_{x_1})\restr_{\mr \times \Gamma},  \mcr_-(0, h)\restr_{\mr \times \Gamma}\ran_{\mcn}=
\lan f_{x_1}, h\ran_{L^2(X)}=\lan f,h\ran_{L^2(X)}.
\end{gather*}
Hence $f_{x_1}=\chi_{x_1} f.$  
\end{proof}
Next we analyze the singularities of the wave which produces $\mcr_{\pm}(0,\chi_{x_1} f).$   Notice that if $f=0$ in $\mcd_{\log x_1}(\Gamma),$ and if $x\leq x_1$ and $y\in \Gamma,$  then $d_g( (x,y), \Gamma_{x_1})= \log (\frac{x_1}{x}),$ and hence, $f(x,y)=0.$

If $F=\mcr_-(0,f)\restr_{\mr \times \Gamma},$ $f\in L^2(X),$
one can take $F$ and $f$ smooth by taking convolution with $\phi\in C_{0}^{\infty}$  even.  Indeed, if $\psi_1$ such that $\displaystyle\hat{\phi}(\lambda)=\psi_{1}(\lambda^2),$ and $G=\phi\ast F=\mcr_-(0,h)\restr_{\mr \times \Gamma}$ with  $h=\psi_{1}\left(\Delta_{g}-\frac{n^2}{4}\right)\mathcal{R}_{-}^{-1}F.$ The point is that the singularities of $\chi_{x_1} f$ at  $\Gamma_{x_1}$ produce the singularities of $\mcr_+(0,\chi_{x_1} f)$ at $(\log x_1,y),$ $y \in \Gamma.$
By expanding the solution to \eqref{wave} with initial data, $(0,\chi_{x_1}f)$ we obtain, 
\begin{lemma}\label{lema}
 Let $x$ be a defining function of $\partial X$ such that \eqref{metric} holds. Let 
$F\in \mcm^-(\Gamma)=\mcr_-(0,f)\restr_{\mr \times \Gamma}$ with $F$ and $f$ smooth.  Then there exists $\epsilon>0$ such that for any $x_1\in (0,\epsilon)$, any $F\in\mathcal{M}^b,$  and for  $|s-\log x_1|$ small enough,
\begin{gather}
\mathcal{R}_{+}\mathcal{R}_{-}^{-1}(\mathcal{P}_{x_1}^b F)(s,y)=\frac{1}{2}x_{1}^{-n/2}f(x_1,y)\frac{|h|^{1/4}(x_1,y)}{|h|^{1/4}(0,y)}(s-\log x_1)_{+}^{0}+\text{smoother terms}, \label{singexp}
\end{gather}
\end{lemma}
We refer the reader to the proof of Lemma 8.9 of \cite{sa}.

Notice that the left hand side of \eqref{singexp} is determined by $\mcsg,$ so the right hand side is also determined by $\mcsg.$  By assumption--since the scattering matrices are equal--  $h_{0,1}=h_{0,2}$ on $\Gamma.$ Therefore $|h_{1}|(0,y)=|h_2|(0,y),$ $y\in \Gamma$ and we obtain the following result:
\begin{cor}\label{82}
Let $(X_{1},g_1)$ and $(X_2,g_2)$ be asymptotically hyperbolic manifolds satisfying the hypothesis of Theorem \ref{INV}. Moreover, assume that $\Delta_{g_j}$, $j=1,2$, have no eigenvalues. Let $\mathcal{R}_{j,\pm},$ $j=1,2,$ denote the corresponding forward or backward radiation fields defined in coordinates in which \eqref{metric} holds. Then there exists an $\epsilon>0$ such that, for $(x,y)\in [0,\epsilon)\times \Gamma$,
\begin{equation}\label{812}
\begin{gathered}
|h_{1}|^{1/4}(x,y)\mathcal{R}_{1,-}^{-1}F(x,y)=|h_{2}|^{1/4}(x,y)\mathcal{R}_{2,-}^{-1}F(x,y),\quad \forall F\in\mathcal{M}^-(\Gamma),\\
|h_{1}|^{1/4}(x,y)\mathcal{R}_{1,+}^{-1}F(x,y)=|h_{2}|^{1/4}(x,y)\mathcal{R}_{2,+}^{-1}F(x,y),\quad \forall F\in\mathcal{M}^+(\Gamma).
\end{gathered}
\end{equation}
\end{cor}

Proposition \ref{propo} easily follows from this result. Indeed,  since 
\begin{equation}\label{813}
\mathcal{R}_{j,-}^{-1}\left(\frac{\partial^2}{\partial s^2}F\right)=\left(\Delta_{g_j}-\frac{n^2}{4}\right)\mathcal{R}_{j,-}^{-1}F.
\end{equation}
if we apply Corollary \ref{82} to $\p_s^2 F,$ we obtain
\begin{equation}\label{814}
|h_1|^{1/4}(x,y)\left(\Delta_{g_1}-\frac{n^2}{4}\right)\mathcal{R}_{1,-}^{-1}F(x,y)=|h_2|^{1/4}(x,y)\left(\Delta_{g_2}-\frac{n^2}{4}\right)\mathcal{R}_{2,-}^{-1}F(x,y).
\end{equation}

Set $\mathcal{R}_{1,-}^{-1}F=f.$ Since $F$ is arbitrary and the metrics have no eigenvalues, the equations \eqref{812} and \eqref{814} give
\begin{equation}\label{815}
|h_1|^{1/4}(x,y)\left(\Delta_{g_1}-\frac{n^2}{4}\right)f(x,y)=|h_2|^{1/4}(x,y)\left(\Delta_{g_2}-\frac{n^2}{4}\right)\frac{|h_1|^{1/4}(x,y)}{|h_2|^{1/4}(x,y)}f(x,y),
\end{equation}
for all $\displaystyle f\in C^{\infty}\left( (0,\epsilon)\times \Gamma\right).$ Therefore the operators on both sides of \eqref{815} are equal. In particular, the coefficients of the principal parts of $\Delta_{g_1}$ are equal to those of $\Delta_{g_2}$, and hence the tensors $h_1$ and $h_2$ from \eqref{metric} are equal. This  proves that
\begin{gather*}
\mcr_{1,-}^{-1}(s,y,x',y')=\mcr_{2,-}^{-1}(s,y,x',y'), \;\ y,y'\in \Gamma, \;\ x'\in [0,\eps),
\end{gather*}
and of course the same holds for the forward radiation field. Since $\mcr_\pm$ are unitary, $\mcr_{\pm}^{-1}=\mcr_{\pm}^*,$ and hence this determines the kernel of $\mcr_\pm.$
This proves  Proposition \eqref{propo} in the case of no eigenvalues.

Now we remove the assumption that there are no eigenvalues.
The only poles of the resolvent 
\begin{gather*}
\displaystyle R(\novt+i\lambda)=(\Delta_{g}-\nsq-\lambda^2)^{-1} \text{ in } \{\Im \lambda<0\}
\end{gather*}
 correspond to the finitely many eigenvalues of $\Delta_{g}$. Proposition 3.6 of \cite{GraZwo} states that if $\lambda_{0}\in i\mathbb{R}_{-}$ is such that $n^2/4+\lambda_{0}^2$ is an eigenvalue of $\Delta_{g}$, then the scattering matrix has a pole at $\lambda_{0}$ and its residue is given by
\begin{equation}\label{832}
\text{Res}_{\lambda_{0}}A(\lambda)=\left\{\begin{array}{l}
\Pi_{\lambda_0},\quad \text{if }-i\lambda_0\not\in \frac{\mathbb{N}}{2},\\
\Pi_{\lambda_0}-P_{l},\quad \text{if }-i\lambda_{0}=\frac{l}{2},\quad l\in\mathbb{N},
\end{array}\right.
\end{equation}
where $P_{l}$ is a differential operator whose coefficients depend on derivatives of the tensor $h$ at $\p X,$  and the Schwartz kernel of $\Pi_{\lambda_0}$ is 
\begin{equation}\label{833}
K(\Pi_{\lambda_0})(y,y')=-2i\lambda_{0}\sum_{j=1}^{N_{0}}\phi_{j}^{0}\otimes\phi_{j}^{0}(y,y'),
\end{equation}
where $\phi_{j}^{0}(y)$ is defined by
$$\phi_{j}^{0}(y)=x^{-n/2-\lambda_0}\phi_{j}(x,y)|_{x=0}.$$
Here $N_{0}$ is the multiplicity of the eigenvalue $n^2/4+\lambda_{0}^{2}$, and $\phi_j$, $1\leq j\leq N_{0},$ are the corresponding orthonormalized eigenfunctions.

Since $A_{1,\Gamma}=A_{2,\Gamma},$ $\la \in \mr\setminus 0,$  it follows from Theorem \ref{JoSa2} that  in coordinates where \eqref{fomet} is satisfied, all derivatives of $h_1$ and $h_2$ agree at $x=0$ on $\Gamma$. Therefore the operators $P_{l}$ in \eqref{832} are the same in $\Gamma$. Then \eqref{832}, \eqref{833}, and the meromorphic continuation of the scattering matrix show that $\Delta_{g_1}$ and $\Delta_{g_2}$ have the same eigenvalues with the same multiplicity. Moreover, \eqref{833} implies that if $\phi_j$ and $\psi_j$, $1\leq j\leq N_{0}$, are orthonormal sets of eigenfunctions of $\Delta_{g_1}$ and $\Delta_{g_2}$, respectively, corresponding to the eigenvalue $n^2/4+\lambda_{0}^2$, then there exists a constant orthogonal $(N_{0}\times N_{0})-$matrix $A$ such that $\displaystyle \Phi^{0}|_{\Gamma}=A\Psi^{0}|_{\Gamma},$ where $\displaystyle (\Phi^{0})^{T}=(\phi_{1}^{0},\phi_{2}^{0},...,\phi_{N_{0}}^{0})$ and $\displaystyle (\Psi^{0})^{T}=(\psi_{1}^{0},\psi_{2}^{0},...,\psi_{N_0}^{0}).$ So by redefining one set of eigenfunctions from, let us say, $\Psi$ to $A\Psi$, where $\Psi^{T}=(\psi_{1},\psi_{2},...,\psi_{N_0}),$ we may assume that
\begin{equation}\label{834}
\phi_{j}^{0}(y)=\psi_{j}^{0}(y),\quad y\in\Gamma\quad j=1,2,..,N_{0}.
\end{equation}

Note that this does not change the orthonormality of the eigenfunctions in $X_2$ because $A$ is orthogonal. Denote the eigenvalues of $\Delta_{g_1}$ and $\Delta_{g_2}$, which we know are equal, by
\begin{equation}\label{835}
\mu_{j}=\frac{n^2}{4}+\lambda_{j}^2,\quad \lambda_{j}\in i\mathbb{R}_{-},\quad 1\leq j\leq N.
\end{equation}
They are also ordered so that $\mu_{1}\leq \mu_{2}\leq \cdot\cdot\cdot\leq \mu_{N}.$

 Again, we use that the singularities of $\chi_{x_1} f$ at  $\Gamma_{x_1}$ produce the singularities of $\mcr_+(0,\chi_{x_1} f)$ at $(\log x_1,y),$ $y \in \Gamma$ and
expand the solution to \eqref{wave} with initial data, $(0,\chi_{x_1}f).$  However, in this case $L^2(X)\not= L_{ac}^2(X)$ and hence Lemma \ref{87} is not valid, and we have to replace it by the following 
\begin{lemma}\label{87E} Let $(X,g)$ be an asymptotic hyperbolic manifold and let $\phi_j,$ $1\leq j \leq N,$ denote the orthonormal set of eigenfunctions of $\Delta_g.$  Let $x$ be such that \eqref{metric} holds in $(0,\epsilon)\times\partial X$. For $x_1\in (0,\epsilon)$, let $\mathcal{P}_{x_1}^{-}$ denote the orthogonal projector defined in \eqref{projectors}.
Let $\chi_{x_1}$ be the characteristic function of the set $X_{x_1}= X\setminus \mcd_{\log x_1}(\Gamma)$. There exists $\eps_0$ such that if
$\eps<\eps_0,$ then for every $f\in L^2_{\text{ac}}(X)$ there exist $\alpha(x_1,f),$ which is a linear function of $f,$ such that
\begin{gather*}
\mathcal{P}_{ x_1}^-(\mathcal{R}_{-}(0,f)\restr_{\mr \times \Gamma}) =\left.\mathcal{R}_{-}\left(0,\chi_{x_1}(f-\sum_{j=1}^N \alpha_j(x_1,f) \phi_j )\right)\right|_{\mr \times \Gamma}.
\end{gather*}
\end{lemma}
\begin{proof}  Let $h \in L_{ac}^2(X)$ be supported in $X_{x_1}.$ Then since $\mcp_{x_1}^-$ is a projector, there exists
$f_{x_1}\in L_{ac}^2(X),$ supported in $X_{x_1},$  such that $\mathcal{P}_{ x_1}^-(\mathcal{R}_{-}(0,f)\restr_{\mr \times \Gamma}) =\mathcal{R}_{-}(0,f_{x_1})\restr_{\mr \times \Gamma},$ and
for every $h\in L_{ac}^2(X)$ supported in $X_{x_1},$
\begin{gather*}
\lan \mcr_-(0, f_{x_1})\restr_{\mr \times \Gamma},  \mcr_-(0, h)\restr_{\mr \times \Gamma}\ran_{\mcn}=
\lan f_{x_1}, h\ran_{L^2(X)}=\lan f,h\ran_{L^2(X)}.
\end{gather*}
Hence $\lan (f_{x_1}-f),h\ran=0$ for all $h\in C_0^\infty(X)\cap L^2_{ac}(X)$ supported in $X_1.$   Therefore,
$f_{x_1}-f= \chi_{x_1} \sum_{j=1}^N \alpha_j \phi_j.$ Since $f_{x_1}\in L^2_{ac}(X),$ $\lan f_{x_1},\chi_{x_1}\phi_j\ran=0,$ and hence
\begin{gather*}
\lan f, \chi_{x_1} \phi_k\ran = \sum_{j=1}^N \alpha_j \lan \chi_{x_1} \phi_j, \chi_{x_1} \phi_k\ran.
\end{gather*}
This gives a linear system of equations 
\begin{gather*}
M \alpha= F, \;\ \alpha^T=(\alpha_1,...,\alpha_N), \;\ F^T=(F_1(x_1), ..., F_N(x_1)\}, \\ M_{jk}(x_1)=\lan \chi_{x_1} \phi_j, \chi_{x_1} \phi_k\ran, \;\ F_k(x_1)=\lan f, \chi_{x_1} \phi_k\ran.
\end{gather*}
Since eigenfunctions are orthonormal,  then for $x_1=0,$ $M_{jk}(0)=\del_{jk}.$  Therefore the system has a solution if $x_1<\eps_0,$ for certain $\eps_0,$ which does not depend of $f.$   Notice that, since $f\in L^2_{ac}(X),$ then for $x_1=0,$ $F_k(0)=0,$ and hence, $\alpha(0,f)=0.$
\end{proof}

As in \cite{sa}, we shall denote 
\begin{gather*}
T(x_1) f= \sum_j \alpha_j(x_1,f) \phi_j.
\end{gather*}
Since $\alpha(0,f)=0,$ $T(0)=0.$ Therefore one can pick $\eps$ small so that 
\begin{gather}
||T(x_1)||<\ha \text{ for } x_1<\eps. \label{smallT}
\end{gather}
In the case, Lemma \ref{lema} and Corollary \ref{82} have to be substituted by
\begin{lemma}\label{lemaE} Let $(X,g)$ be an asymptotically hyperbolic manifold, let $\phi_j,$ $1\leq j \leq N,$ denote the eigenfunctions of $\Delta_g.$ 
 Let $x$ be a defining function of $\partial X$ such that \eqref{metric} holds,  let $T(x_1)$ be defined as above and let
$F\in \mcm^-(\Gamma),$ $F=\mcr_-(0,f)\restr_{\mr \times \Gamma}$ with $F$ and $f$ smooth.  Then there exists $\epsilon>0$ such that for any $x_1\in (0,\epsilon)$,  and for  $|s-\log x_1|$ small enough,
\begin{gather}
\mathcal{R}_{+}\mathcal{R}_{-}^{-1}(\mathcal{P}_{x_1}^b F)(s,y)=\frac{1}{2}x_{1}^{-n/2}[(\id-T(x_1))f](x_1,y)\frac{|h|^{1/4}(x_1,y)}{|h|^{1/4}(0,y)}(s-\log x_1)_{+}^{0}+\text{smoother terms}, \label{singexpE}
\end{gather}
\end{lemma}

\begin{cor}\label{82E}
Let $(X_{1},g_1)$ and $(X_2,g_2)$ be asymptotically hyperbolic manifolds satisfying the hypothesis of Theorem \ref{INV}.  Let $\mathcal{R}_{j,\pm},$ $j=1,2,$ denote the corresponding forward or backward radiation fields defined in coordinates in which \eqref{metric} holds. Then there exists an $\epsilon>0$ such that, for $(x,y)\in [0,\epsilon)\times \Gamma$,
\begin{equation}\label{812E}
\begin{gathered}
|h_{1}|^{1/4}(x,y)(\id-T_1(x))\mathcal{R}_{1,-}^{-1}F(x,y)=|h_{2}|^{1/4}(x,y)\mathcal{R}_{2,-}^{-1}(\id-T_2(x))F(x,y),\quad \forall F\in\mathcal{M}^-(\Gamma),\\
|h_{1}|^{1/4}(x,y)(\id-T_1(x))\mathcal{R}_{1,+}^{-1}F(x,y)=|h_{2}|^{1/4}(x,y)(\id-T_2(x))\mathcal{R}_{2,+}^{-1}F(x,y),\quad \forall F\in\mathcal{M}^+(\Gamma).
\end{gathered}
\end{equation}
\end{cor}
Now, as in the case of no eigenvalues, we use \eqref{812E} to show that we can apply the same identity to $(\Delta_{g_j}-\nsq)f_j,$  instead of $f_j$ and so we have
\begin{gather}
|h_1|^\oq (\id-T_1(x))(\Delta_{g_1}-\nsq) f_1(x,y) =|h_2|^\oq(\id-T_2(x)) |h_2|^\oq (\Delta_{g_2}-\nsq) f_2(x,y).\label{817E}
\end{gather}

If we denote $\mcr_{j,-}^{-1}F(x,y)= f_j(x,y),$  and pick $\eps$ small so that \eqref{smallT} holds, we deduce from \eqref{812E} that if $(x,y) \in [0,\eps)\times \Gamma,$
\begin{gather*}
f_2(x,y) =(\id-T_2(x))^{-1}(\id-T_1(x)) \frac{|h_1|^\oq}{|h_2|^\oq} f_1(x,y)= \frac{|h_1|^\oq}{|h_2|^\oq} f_1(x,y)+ K(x) f_1(x,y),\\
(\Delta_{g_2}-\nsq) f_2(x,y)= (\id-T_2(x))^{-1}(\id-T_1(x)) \frac{|h_1|^\oq}{|h_2|^\oq}(\Delta_{g_1}-\nsq) f_1(x,y) = \\
 \frac{|h_1|^\oq}{|h_2|^\oq}(\Delta_{g_1}-\nsq) f_1(x,y) + \widetilde{K} f_1(x).
\end{gather*}
where $K$ and $\widetilde{K}$ are compact operators.    If one substitutes the first equation into the second, one obtains
\begin{gather*}
 \frac{|h_1|^\oq}{|h_2|^\oq}(\Delta_{g_1}-\nsq) f_1(x,y)- (\Delta_{g_2}-\nsq) \frac{|h_1|^\oq}{|h_2|^\oq} f_1(x,y) = \mathcal{K} f_1,
 \end{gather*}
 where $\mathcal{K}$ is a compact operator.  Since the operator on the left hand side is a differential operator, and the operator on the right hand side is compact, they both must be equal to zero. As above, we conclude that in coordinates $(x,y),$ the coefficients of the operators $\Delta_{g_1}$ are equal to those of $\Delta_{g_2}.$ Hence we must have $h_1(x,y,dy)=h_2(x,y,dy).$

 We still have to show \eqref{equalradf} holds in the case where eigenvalues exist.  Let $F \in \mcm^+(\Gamma),$ and let $f_j =\mcr_{j,+}^- F.$ Let $v_j$ satisfy \eqref{wave} with initial data $(0,f_j).$  Since $\Delta_{g_1}=\Delta_{g_2}$ in $(0,\eps)\times \Gamma,$ we have $P (v_1-v_2)=0,$ where $P$ is defined in \eqref{operatorP}.  Then by the proof of Theorem \ref{support-local}, we have $f_1=f_2$ in $(0,\eps)\times \Gamma.$  Since $F$ is arbitrary, \eqref{equalradf} follows.
 This ends the proof of Proposition \ref{propo}. 
 \end{proof}

  Finally, we will prove Theorem \ref{INV}.   The fact that the metrics are equal in these coordinates implies that there exist $V_{j,\eps}\subset X_j,$ $j=1,2,$ neighborhoods of $\Gamma$ and $C^\infty$ differomomrphisms
\begin{gather*}
\Psi_{j,\eps}: \Gamma \times [0,\eps) \longrightarrow V_{j,\eps}\subset X_j
\end{gather*} 
such that
\begin{equation}\label{86}
\Psi^{*}_{1,\epsilon}(g_{1}|_{V_{1,\epsilon}})=\Psi^{*}_{2,\epsilon}(g_{2}|_{V_{2,\epsilon}}).
\end{equation}
Therefore, the map
\begin{gather}
\Psi_{\eps}: V_{1,\eps} \longrightarrow V_{2,\eps}, \text{ defined by } \Psi_\eps=\Psi_{1,\eps} \circ \Psi_{2,\eps}^{-1},\label{defnpsie}
\end{gather}
satisfies

\begin{equation}\label{E87}
\Psi_\eps^*(g_2|_{U_{2,\epsilon}})=g_{1}|_{U_{1,\epsilon}}, \text{ and }  \Psi_\eps= \id \text{ on } \Gamma.
\end{equation}
We will show that  $\Psi_\eps$ can be extended to the whole manifolds, and to do that we will show that the fact that scattering matrices are equal at $\Gamma$ implies that there exist compact manifolds with boundary $X_{j,\eps}\subset X_j$ such that $\Gamma_\eps \subset X_{1,\eps}\cap X_{2,\eps}$ as manifolds and that the eigenvalues of $\Delta_{g_j}$ with Dirichlet data on $X_{j,\eps}$ are equal and the traces of the normal derivatives of the Dirichlet eigenfunctions of $\Delta_{g_j},$ $j=1,2$ coincide at $\Gamma.$ We shall then use  the following result due to Kurylev and Lassas \cite{KaKuLa} and \cite{KuLa}.
\begin{prop}\label{pkl}
Assume that $(Z_1,g_1)$ and $(Z_2,g_2)$ are smooth Riemannian manifolds with boundary $\partial Z_1$ and $\partial Z_2$, respectively. Assume that there are open sets $\Gamma_1\subset\partial Z_1$ and $\Gamma_2 \subset\partial Z_2$, such that the boundary spectral data on $\Gamma_2$ and $\Gamma_2$ coincide. Namely,  if $\alpha_{1,j},$ and $\vphi_{1,j}$ are the eigenvalues and the normalized Dirichlet eigenfunctions of $\Delta_{g_1}$ and 
 and $\alpha_{2,j},$ $\vphi_{2,j}$  are  the eigenvalues and the normalized Dirichlet eigenfunctions of $\Delta_{g_2},$
\begin{gather*}
\Gamma_1=\Gamma_2 \text{ as manifolds, and}\quad \alpha_{1,j}=\alpha_{2,j},\quad \partial_{\nu}\vphi_{1,j}|_{\Gamma_1}=\partial_{\nu}\tilde{\vphi}_{2,j}|_{\Gamma_2},\quad
j=1,2,...
\end{gather*} 
 Then there exists a diffeomorphism $\Psi: Z_1 \longrightarrow Z_2$ such that $\Psi^* \tilde{g}=g$ and $\Psi=\id$ on $\Gamma.$ 
\end{prop}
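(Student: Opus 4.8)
The plan is to establish Proposition~\ref{pkl} by the Boundary Control Method of Belishev, Kurylev, Lassas and Tataru \cite{BK,TAT,KaKuLa,KuLa}; I only indicate the main steps. Write $(Z,g)=(Z_1,g_1)$, let $\Delta_g$ be the Dirichlet Laplacian, let $\{\alpha_j,\vphi_j\}$ be its eigenvalues and (complete, orthonormal) eigenfunctions, and for a boundary source $f\in C_0^\infty(\mr_+\times\Gamma)$ let $u^f$ solve $(D_t^2-\Delta_g)u^f=0$ in $\mr_+\times Z$ with $u^f|_{t=0}=D_tu^f|_{t=0}=0$ and $u^f|_{\mr_+\times\partial Z}=f$ (extended by $0$ off $\Gamma$). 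The first step is to pass from the spectral data to dynamical data: expanding $u^f(t)=\sum_j c_j(t)\vphi_j$ and using Green's formula (with $\vphi_j|_{\partial Z}=0$) one finds $\ddot c_j+\alpha_j c_j=-\lan f,\partial_\nu\vphi_j\ran_\Gamma$ with zero data, so every $c_j(t)$, and hence every inner product $\lan u^f(T),u^h(T)\ran_{L^2(Z)}=\sum_j c_j^f(T)\,\overline{c_j^h(T)}$ and the local response operator $f\mapsto u^f|_{\mr_+\times\Gamma}$, is explicitly determined by $\{\alpha_j,\partial_\nu\vphi_j|_\Gamma\}$ (equivalently, one may quote the Blagove\v{s}\v{c}enski\u{\i} identity). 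In particular all these quantities are the same for $Z_1$ and $Z_2$.

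The second step is approximate controllability, and this is where Tataru's sharp unique continuation theorem \cite{TAT} --- the same tool behind Theorem~\ref{support-local} --- enters: for every open $\Gamma'\subseteq\Gamma$ and every $T'>0$ the reachable set $\{u^f(\cdot,T'):\supp f\subset\Gamma'\times(0,T')\}$ is dense in $L^2(M(\Gamma',T'))$, where $M(\Gamma',T')=\{z\in Z:d_g(z,\Gamma')<T'\}$ is the domain of influence; the metric being time-independent, Tataru's ``partially analytic coefficients'' hypothesis holds. Since $Z$ is compact and connected, $\sup_{z\in Z}d_g(z,\Gamma)<\infty$, so $M(\Gamma,T_0)=Z$ for $T_0$ large, and for such $T_0$ the waves $u^f(\cdot,T_0)$ are dense in $L^2(Z)$.

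The third step is the reconstruction. By steps one and two the data determine, intrinsically and identically for $Z_1$ and $Z_2$, the nested family of closed subspaces $\{L^2(M(\Gamma',T'))\}$ (as closures of reachable sets) together with the orthogonal projections onto them (computed from the inner products of step one). A slicing and limiting argument applied to these projections then recovers, for each point of $Z$, local geometric data pinning it down: one reconstructs the metric in boundary normal coordinates in a collar of $\Gamma$ and then continues the reconstruction inward, a process that terminates because $Z$ is compact. The output is a complete isometric copy $(\widehat Z,\widehat g)$ of $Z$, built from the data alone, inside which $\Gamma$ sits canonically as a piece of $\partial\widehat Z$. Running this with the data of $Z_1$ and with the identical data of $Z_2$ yields the same $(\widehat Z,\widehat g)$; composing the two reconstruction isometries $Z_1\to\widehat Z$ and $Z_2\to\widehat Z$ gives a Riemannian isometry $\Psi\colon Z_1\to Z_2$ with $\Psi^*g_2=g_1$, and since the collar of $\Gamma$ is reconstructed canonically and $\Gamma_1=\Gamma_2$ as manifolds, $\Psi=\id$ on $\Gamma$.

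The hard part is the third step: converting global $L^2(Z)$ inner products together with a nest of domain-of-influence subspaces into genuine pointwise geometric information, and then assembling a manifold from it. This is the technical heart of the Boundary Control Method in the partial-data case; what makes it go through is Tataru's unconditional unique continuation (so no geometric control condition on $\Gamma$ is needed) together with the compactness and connectedness of $Z$, which guarantee both that waves emitted from $\Gamma$ eventually reach every point of $Z$ and that the inward continuation of the reconstruction stops after finitely many steps.
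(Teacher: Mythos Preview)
The paper does not prove Proposition~\ref{pkl} at all: it is stated as a black-box result ``due to Kurylev and Lassas \cite{KaKuLa} and \cite{KuLa}'' and then invoked in the final assembly of the argument for Theorem~\ref{INV}. So there is no ``paper's own proof'' to compare against; what you have written is a sketch of the argument from the cited references rather than a replacement for anything the authors do.

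That said, your sketch is a faithful outline of the Boundary Control Method in the partial-data setting, exactly as in \cite{KuLa,KaKuLa}: passage from spectral to dynamical data via the Blagove\v{s}\v{c}enski\u{\i} identity, approximate controllability from Tataru's unique continuation \cite{TAT}, and reconstruction from the lattice of domain-of-influence subspaces. The one place where your account is thin is step three: you say ``a slicing and limiting argument applied to these projections then recovers \ldots\ local geometric data,'' but the actual mechanism---recovering the boundary distance representation $z\mapsto d_g(z,\cdot)|_\Gamma$ from volumes of domains of influence, showing this map is a topological embedding, and then reading off the metric---is the nontrivial core of \cite{KuLa} and deserves either a more explicit statement or an honest pointer to the specific theorems there. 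As a proof \emph{proposal} for a result the paper merely quotes, what you have is appropriate; as a self-contained proof it would not be.
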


Note that $X_{j,\epsilon}=X_j\setminus \{x_j<\epsilon\}$ are smooth compact manifolds with boundary and that there are open sets $\Gamma_{1,\epsilon}\subset\partial X_{1,\epsilon}$, $\Gamma_{2,\epsilon}\subset\partial X_{2,\epsilon}$ which can be identified by the diffeomorphisms $\Psi_{j,\eps}$  with $\Gamma\times\{\epsilon\}=\Gamma_{\epsilon}.$ 
We think of $\Gamma_{\epsilon}$ as an open subset of the boundary smooth compact manifolds $X_{j,\eps}$. We have shown that  the fact that the two metrics have the same scattering matrix restricted to $\Gamma,$ implies that $\Delta_{g_1}=\Delta_{g_2}$ in $[0,\eps) \times \Gamma.$ 
The next step is to show that $(X_{j,\eps}, g_j)$, $j=1,2,$ have the same spectral data in $\Gamma_{\epsilon}.$ Then Proposition \eqref{pkl} shows that  for $\eps>0$ fixed, there exists a  $C^\infty$ diffeomorphism
$\wtpsi_\eps,$   such that
\begin{gather}
\begin{gathered}
\wtpsi_{\eps}: \overline{X}_{1,\eps} \longrightarrow \overline{X}_{2,\eps}, \;\ \wtpsi_{\eps}\restr_{\Gamma_\eps}=\id, \\
\Psi_\eps^* (g_2\restr_{\overline{X}_{2,\eps}})= g_1\restr_{\overline{X}_{1,\eps}}.
\end{gathered}\label{familymaps}
\end{gather}
Now one has to extend this diffeomorphism up to $X_1.$ Given a point $p \in  X_1\setminus X_{1,\eps}$ take the integral curve of $\nabla_{x_1^2 g_1} x_1$ starting at $p$ and flow along this curve until it hits $p_\eps\in \p X_{1,\eps}$ at time $s(p).$ In other words, $p_{\eps}= \exp( s(p) \nabla_{x_1^2 g_1} x_1)(p).$  Let $ q_\eps=\Psi_\eps(p_{\eps})\in \p X_{2,\eps}.$ In view of \eqref{familymaps},  and since $g_j= \frac{dx_j^2}{x_j^2} + \frac{h_j(x,y,dy)}{x_j^2},$ $j=1,2,$ near $\p X_{j,\eps},$  it follows that
$d\Psi_{\eps}( \nabla_{x_1^2 g_1} x_1)\restr_{\p X_{2,\eps}}=\nabla_{x_2^2g_2} x_2\restr_{\p X_{2,\eps}}.$ Now follow the integral curve of $\nabla_{x_2^2 g_2} x_2$   starting at $q_\eps,$ and flow $s(p)$ units along this curve away from $X_{2,\eps}.$  In other words, in $X_j \setminus X_{j,\eps}$ the map $\widetilde{\Psi}_\eps$ is given by
\begin{gather*}
\exp( -s(p) \nabla_{x_2^2 g_2} x_2) \circ \Psi_\eps \circ \exp( s(p) \nabla_{x_1^2 g_1} x_1)= \widetilde{\Psi}_\eps.
\end{gather*}
 This gives a map,
\begin{gather}
\wtpsi: X_1 \longrightarrow X_2,  \;\ \wtpsi\restr_{\Gamma}=\id \text{ and that }
{\wtpsi}^* g_2= g_1.
\end{gather}

\begin{figure}[htb!]
% Generated with LaTeXDraw 2.0.8
% Thu Jun 06 10:43:04 EDT 2013
% \usepackage[usenames,dvipsnames]{pstricks}
% \usepackage{epsfig}
% \usepackage{pst-grad} % For gradients
% \usepackage{pst-plot} % For axes
\scalebox{.75} % Change this value to rescale the drawing.
{
\begin{pspicture}(0,-3.8009243)(20.42,3.8009243)
\usefont{T1}{ptm}{m}{n}
\rput(4.75,2.3440757){$X_1$}
\usefont{T1}{ptm}{m}{n}
\rput(5.43,0.024075773){$X_{1,\eps}$}
\psdots[dotsize=0.12](1.04,0.5390758)
\psdots[dotsize=0.12](2.2,2.1390758)
\psdots[dotsize=0.12](1.82,0.15907577)
\psdots[dotsize=0.12](2.82,1.5390757)
\usefont{T1}{ptm}{m}{n}
\rput(0.8,1.9040757){$\Gamma$}
\psbezier[linewidth=0.04](1.06,0.6590758)(0.84,-0.5809242)(1.54,-0.5057727)(1.8,0.09703432)
\psbezier[linewidth=0.04](2.2,2.1222267)(2.8435,2.4390757)(3.76,2.1257086)(2.7998102,1.4941869)
\psbezier[linewidth=0.04](9.22,1.4390758)(7.42,-0.90277267)(10.0,-2.7009242)(15.2,-1.3009242)(20.4,0.09907577)(11.02,3.7809243)(9.22,1.4390758)
\psdots[dotsize=0.12](9.0,1.1390758)
\psdots[dotsize=0.12](10.8,2.1990757)
\usefont{T1}{ptm}{m}{n}
\rput(9.14,2.1640759){$\Gamma$}
\psdots[dotsize=0.12](9.82,0.75907576)
\psdots[dotsize=0.12](11.38,1.5790758)
\psbezier[linewidth=0.04](9.02,1.1390758)(8.58,0.25907576)(9.14,-0.28092423)(9.8,0.75907576)
\psbezier[linewidth=0.04](10.82,2.1790757)(11.2024975,2.2998993)(12.52,1.9990758)(11.347266,1.5790758)
\usefont{T1}{ptm}{m}{n}
\rput(13.11,2.2840757){$X_2$}
\usefont{T1}{ptm}{m}{n}
\rput(14.07,0.22407578){$X_{2,\eps}$}
\usefont{T1}{ptm}{m}{n}
\rput{55.042995}(1.845145,-0.8830941){\rput(1.77,1.3440758){$\mcd_{\log \eps}^1(\Gamma)$}}
\usefont{T1}{ptm}{m}{n}
\rput(2.82,0.6840758){$\Gamma_{1,\eps}$}
\usefont{T1}{ptm}{m}{n}
\rput(11.42,0.8640758){$\Gamma_{2,\eps}$}
\usefont{T1}{ptm}{m}{n}
\rput{41.001434}(3.2350225,-6.07398){\rput(9.74,1.3040757){$\mcd_{\log\eps}^2(\Gamma)$}}
\psbezier[linewidth=0.04](11.36,1.5190758)(10.392135,1.2676057)(10.415929,1.746844)(9.82,0.75907576)(9.2240715,-0.22869256)(11.296868,-1.2518924)(12.16,-0.8209242)(13.023131,-0.38995606)(15.62,-0.9609242)(15.38,0.09907577)(15.14,1.1590757)(14.31877,1.3348954)(13.36,1.6190758)(12.401229,1.9032562)(12.327865,1.7705458)(11.36,1.5190758)
\psbezier[linewidth=0.04](2.82,1.4990758)(1.9171258,1.069171)(1.7963772,1.1587967)(1.82,0.15907577)(1.8436228,-0.8406452)(3.42,-1.4209242)(3.58,-1.9809242)(3.74,-2.5409243)(7.486769,-3.430828)(6.8,-2.0609243)(6.1132307,-0.6910203)(6.3,1.3790758)(5.46,1.2190758)(4.62,1.0590758)(3.7228742,1.9289806)(2.82,1.4990758)
\psbezier[linewidth=0.04](2.1695895,2.122485)(1.2769662,1.7047033)(1.1378895,1.5656667)(1.0285375,0.5514621)(0.9191854,-0.46274242)(0.33825517,-0.5373007)(0.83180434,-1.3974015)(1.3253535,-2.2575023)(4.2,-3.5809243)(5.7,-3.6809242)(7.2,-3.7809243)(9.4,-1.6609242)(7.8,-1.1809242)(6.2,-0.7009242)(6.98,1.7990757)(6.084579,1.8440758)(5.189158,1.8890758)(3.062213,2.5402665)(2.1695895,2.122485)
\end{pspicture} 
}
\caption{The domains $X_{j,\eps},$ $\mcd_{\log \eps}^j(\Gamma)$ and the sets $\Gamma_{j,\eps}.$}
\label{fig5}
\end{figure}

To prove that $(X_{j,\eps}, \Delta_{g_j})$ have the same boundary data at $\Gamma_\eps,$ we take Fourier transform of $\mcr_\pm$ in the variable $s.$ We know from
\eqref{equalradf} that  if $E_{j}\left(\frac{n}{2}+i\lambda,y',z\right)=\widehat{\mcr_{j,+}}(s,y,z),$ then
\begin{gather}
E_{1}\left(\frac{n}{2}+i\lambda,y',z\right)=E_{2}\left(\frac{n}{2}+i\lambda,y',z\right), \;\  y \in \Gamma, \;\ z=(x,y) \in [0,\eps) \times \Gamma. \label{equaleins}
\end{gather}
Using an argument due to Melrose, Lemma 3.2 of   \cite{megs},  S\'a Barreto showed that for any $\lambda\neq0$, the set of functions given by 
\begin{equation}\label{830}
v_{j}(z,\lambda)=\int_{\partial X_{j}}E_{j}\left(\frac{n}{2}+i\lambda,y',z\right) \phi(y')\dvol_{h_0}(y'),\quad j=1,2,\quad \phi\in C^{\infty}(\partial X_{j}),
\end{equation}
  is dense in the set of solutions of 
\begin{equation}\label{831}
\left(\Delta_{g_j}-\lambda^2-\frac{n^2}{4}\right)u=0\quad \text{in } X_{j,\epsilon},\quad j=1,2,
\end{equation}
in the Sobolev space $\displaystyle H^{k}(X_{j,\epsilon})$ for any $k\geq 2.$

  We know from \eqref{equaleins} that $\displaystyle E_{1}(n/2+i\lambda,y',x,y)=E_2(\novt+i\la,y',x,y)$ provided $x\in [0,\eps)$ and $y,y'\in \Gamma,$ and hence $v_{1}(x,y,\lambda)=v_{2}(x,y,\lambda),$ $(x,y)\in (0,\eps)\times \Gamma$, $\lambda\in\mathbb{R}\setminus\{0\}.$ Therefore their traces and normal derivatives at $\Gamma_{\epsilon}$ are equal at $\Gamma_\eps,$ and the density of this set implies that the same is true for solutions of \eqref{831}.

We now recall that the graph of the Calder\'on projector of $\displaystyle \Delta_{g_j}-\nsq-\la^2$ in $X_{j,\epsilon}$, $j=1,2,$ restricted to $\Gamma,$ which we denote by $C_{j,\la}$, is the closed subspace of $L^{2}(\Gamma)\times H^{1}(\Gamma)$ consisting of $(f,g)\in L^{2}(\Gamma)\times H^1(\Gamma)$ such that there exists $u$ satisfying
\begin{gather*}
\left(\Delta_{g_j}-\nsq-\la^2\right)u=0\quad \text{in } X_{j,\epsilon},\quad j=1,2, \\
u|_{\partial X_{j,\epsilon}}=f\in C_0^\infty(\Gamma),\quad \partial_{\nu}u|_{\Gamma}=g.
\end{gather*}
Therefore $\lambda^2$ is in the Dirichlet spectrum of $\displaystyle \Delta_{g_j}-\nsq$ in $X_{j,\epsilon}$ if and only if $C_{j,\lambda}$ contains a subspace of pairs of the form $(0,g),$ $g\neq 0.$ If $\la\in \mr,$ this only gives us eigenvalues greater than or equal to $0.$   But since $E_{j}(\novt+i\la,y,z)$ are meromorphically in $\im \la <0,$ they must also agree in this region, and therefore we obtain all eigenvalues.  Therefore we conclude that the spectral data 
$(\Delta_{g_j}, X_{j,\eps}),$ $ j=1,2$ coincide at $\Gamma.$ Thus the hypothesis of Proposition \eqref{pkl} are satisfied, and hence the maps
$\Psi_\eps$ satisfyng \eqref{familymaps} exist and  this concludes the proof of Theorem \ref{INV}.


\begin{thebibliography}{99}
\bibitem{BK}  M. Belishev and Y. Kurylev. {\em To the reconstruction of a Riemannian manifold via its spectral data (BC-method). } Comm. Partial Differential Equations 17 (1992), no. 5--6, 767--804.
\bibitem{bouclet} J-M. Bouclet. {\em  Absence of eigenvalue at the bottom of the continuous spectrum on asymptotically hyperbolic manifolds.}  arXiv:1208.0709
\bibitem{FeGra} C. Fefferman and C. R. Graham. {\em Conformal invariants.}  Asterisque, vol. hors s\'erie, Soc. math. France, 1985, 95--116.
\bibitem{fried0}  F. G. Friedlander. {\em Radiation Fields and hyperbolic scattering theory. } Math. Proc. Camb. Phil. Soc., 88, 483-515, 1980
\bibitem{fried1} F.G. Friedlander. {\em Notes on the wave equation on asymptotically Euclidean manifolds. } Journ. of Func. Anal.{\bf 184}, no.1, 1-18, (2001).
\bibitem{colin2} C. Guillarmou. {\em Meromorphic properties of the resolvent on asymptotically hyperbolic manifolds.}  Duke Math. Journal  129 (2005), no. 1., 1-37.
\bibitem{colin3} C. Guillarmou and A. S\'a Barreto. {\em Scattering and inverse scattering on ACH manifolds.}  J. Reine Angew. Math. 622 (2008), 1-55.
\bibitem{Gui1} L. Guillop\'e. {\em Fonctions z\^eta de Selberg et surfaces de g\'eom\'etrie finie.}  In Zeta Functions in Geometry (Tokyo, 1990), Adv. Stud. Pure Math. 21, Kinokuniya, Tokyo, 1992, 33--70.
\bibitem{GuZwo3} L. Guillop\'e and M. Zworski. {\em Polynomial bounds on the number of resonances for some complete spaces of constant negative curvature near infinity.}  Asymptotic Anal. 11 (1995), 1-22.
\bibitem{GuZwo1} L. Guillop\'e and M. Zworski. {\em Scattering asymptotics for Riemann surfaces.} Ann. of Math. 145 (1997), 597-660.
\bibitem{Gra} C.R. Graham. {\em Volume and area renormalization for conformally compact Eistein metrics.} The Proceedings of the 19th Winter School ``Geometry and Physics'' (Srn\' i, 1999). Rend. Circ. Mat. Palermo (2) Suppl. No. 63 (2000), 31-42.
\bibitem{GraZwo} C.R. Graham and M. Zworski. {\em Scattering matrix in conformal geometry.}  Invent. Math., 152, 89-118, 2003.
\bibitem{helgason1} S. Helgason. {\em The Radon transform on Euclidean spaces, compact two-point homogeneous spaces and Grassmann manifolds.}
 Acta Math. 113, 153-18o (1965)
\bibitem{helgason2} S. Helgason. {\em The Radon Transform.} 2nd Edition Vol. 5, progress in mathematics, Birkh\"auser, (1999)
\bibitem{helgason3} S. Helgason. {\em Non-Euclidean analysis.} Non-Euclidean geometries, 367Ð384, Math. Appl. (N. Y.), 581, Springer, New York, 2006.
\bibitem{helgason4} S. Helgason. {\em The Abel, Fourier and Radon transforms on symmetric spaces.} Indag. Math. (N.S.) 16 (2005), no. 3-4, 531Ð551.
\bibitem{hormander1}   L. H\"ormander. {\em Uniqueness theorems for second order elliptic differential equations.} Comm. Partial Differential Equations 8 (1983), no. 1, 21--64. 
\bibitem{hormander64} L. H\"ormander. {\em   Linear Partial Differential Operators.} Springer Verlag, 1963.
\bibitem{Ho} L. H\"ormander. {\em The Analysis of Linear Partial Differential Operators. vol. I-IV.} Springer Verlag, 1994.
\bibitem{hormander-pa} L. H\"ormander. {\em On the uniqueness of the Cauchy problem under partial analyticity assumptions. }Geometrical optics and related topics (Cortona, 1996), 179?219, Progr. Nonlinear Differential Equations Appl., 32, Birkh?user Boston, Boston, MA, 1997.
\bibitem{JS}  M.S. Joshi and A. S\'a Barreto.{\em Recovering asymptotics of metrics from fixed energy scattering data.}  Inventiones Mathematicae 137, 127--143 (1999).
\bibitem{JS2} M.S. Joshi and A. S\'a Barreto. {\em Inverse scattering on asymptotically hyperbolic manifold.}  Acta Mathematica Vol. 184, 41--86, (2000).
\bibitem{joshisabarreto1} M.S. Joshi and A. S\'a Barreto. {\em The Wave Group on Asymptotically Hyperbolic Manifolds.} Journal of Functional Analysis, vol. 184,  291-312, (2001).
\bibitem{KaKuLa} A. Katchalov, Y. Kurylev, M. Lassas. {\em Inverse Boundary Spectral Problems.} Monographs and Surveys in Pure and Applied Mathematics 123, Chapman Hall/CRC-press, 2001, xi+290 pp.
\bibitem{KuLa1} Y. Kurylev and M. Lassas. {\em Hyperbolic inverse boundary-value problem and time continuation of the non-stationary Dirichlet-to-Neumann map.} Proc. Royal. Soc. Edinburgh, 132A, 931-949, (2002)
\bibitem{KuLa} Y. Kurylev, M. Lassas. {\em Hyperbolic inverse problem with data on a part of the boundary. } AMS/IP Stud. Adv. Math. 16 (2000), 259-272, Amer. Math. Soc., Proceedings of Differential equations and mathematical physics (Birmingham, AL, 1999)
\bibitem{lp1}  P. Lax and R. Phillips. {\em Translation representation for the solution of the non-Euclidean wave equation.}  Comm. Pure Appl. Math. 32 (1979), 617--667.
\bibitem{lp2}  P. Lax and R. Phillips. {\em Translation representation for automorphic solutions of the wave equation
in non-Euclidean spaces I, II.}  Comm. Pure Appl. Math. 37, 303-328, (1984).
\bibitem{lp3}  P. Lax and R. Phillips. {\em Translation representation for automorphic solutions of the wave equation
in non-Euclidean spaces  II.}  Comm. Pure Appl. Math. 37,  780-813, (1984).
\bibitem{lp4}  P. Lax and R. Phillips. {\em A local Paley-Wiener theorem for the Radon transform of $L^2$ functions in a non-Euclidean setting.} Comm. Pure Appl. Math. 35 
 no. 4, 531Ð554, (1982).
\bibitem{lp} P. Lax and R. Phillips. {\em Scattering theory.} Second edition. With appendices by Cathleen S. Morawetz and George Schmidt. Pure
and Applied Mathematics, 26. Academic Press, Inc., Boston, MA, 1989.
\bibitem{mazzeo1} R. Mazzeo. {\em The Hodge cohomology of a conformally compact metric.} J. Differential Geom. 28 (1988), 309--339.
\bibitem{mazzeo2} R. Mazzeo. {\em Unique continuation at infinity and embedded eigenvalues for asymptotically hyperbolic manifolds.} Amer. J. Math. 113 (1991), 25--45.
\bibitem{mazzmel} R. Mazzeo and R. Melrose. {\em Meromorphic extension of the resolvent on complete spaces with asymptotically constant negative curvature.}  J. Functional Analysis 75 (1987), 260-310.
\bibitem{megs} R. B. Melrose. {\em Geometric scattering theory.} Lectures at Stanford. Cambridge University Press, 1995.
\bibitem{robbianozuilly} L. Robbiano and C. Zuily. {\em Uniqueness in the Cauchy problem for operators with partially holomorphic coefficients.} Invent. Math. 131 (1998), no. 3, 493Ð539.
\bibitem{sa} A. S\'a Barreto. {\em Radiation fields, scattering and inverse scattering on asymptotically hyperbolic manifolds.} 
Duke Math. Journal Vol 129, No. 3, 407-480, 2005.
\bibitem{sawu}  A. S\'a Barreto and J.  Wunsch. {\em The radiation field is a Fourier integral operator. } Ann. Inst. Fourier (Grenoble) 55 (2005), no. 1, 213--227.
\bibitem{TAT}  D. Tataru. {\em Unique continuation for solutions to PDE's; between H\"ormander's theorem and Holmgren's theorem.} Comm. Partial Differential Equations 20 (1995), no. 5--6, 855--884.
\bibitem{tataru1}  D. Tataru. {\em Unique continuation for operators with  partially analytic coefficients.} J. Math. Pures Appl. {\bf 78}, 505-521, (1999)
\bibitem{uhlmann} G. Uhlmann. {\em Developments in inverse problems since Calder\'on's foundational paper.}  Harmonic analysis and
partial differential equations (Chicago, IL, 1996), 295-345, Chicago Lectures in Math., Univ. Chicago Press, Chicago, IL, 1999.
\bibitem{VasyWunsch}  A. Vasy and J.  Wunsch. {\em Absence of super-exponentially decaying eigenfunctions of Riemannian manifolds with pinched negative curvature.} Math. Res. Lett. 12 (2005), no. 5-6, 673--684.
\bibitem{Vasymeromorphic} A. Vasy. {\em Analytic continuation and high energy estimates for the resolvent of the Laplacian on forms on asymptotically hyperbolic spaces.}  arXiv:1206.5454
\end{thebibliography}
\end{document}